\documentclass[12pt]{amsart}
\oddsidemargin 0cm \evensidemargin -0cm \textwidth 16cm\usepackage{amsmath,amsthm,amssymb,latexsym,epic,bbm,comment,yfonts,mathrsfs}
\usepackage{graphicx,enumerate,stmaryrd,rotating,xcolor,color,ytableau,soul}
	\input xy
\allowdisplaybreaks

\usepackage{amsxtra}
\usepackage{amsmath}
\usepackage{amscd}
\usepackage{amssymb}
\usepackage{amsfonts}
\usepackage{mathrsfs}
\usepackage{amsthm}
\usepackage{color}
\usepackage{upgreek}
\usepackage{verbatim}  %%for \begin{comment}
\usepackage{enumerate}
\usepackage{latexsym}
\usepackage{graphicx}
\usepackage{tikz}
\usepackage{todonotes}
\usepackage{setspace}
\usepackage{hyperref}
\usepackage{stmaryrd}
\usepackage{nicefrac}
\usepackage{euscript}
\usetikzlibrary{decorations.pathmorphing}
\usepackage{tikz}
\usepackage{tikz-cd}

 \definecolor{darkgreen}{HTML}{336633}
 \definecolor{darkred}{HTML}{993333}
\makeatletter

\newcommand{\arxiv}[1]{\href{http://arxiv.org/abs/#1}{\tt
    arXiv:\nolinkurl{#1}}}

\newtheorem{alphatheorem}{Theorem}

\theoremstyle{plain}
\newtheorem{thm}{Theorem}[section]
\newtheorem*{thm*}{Theorem}

\newtheorem{lem}[thm]{Lemma}
\newtheorem{prop}[thm]{Proposition}

\newtheorem{df-prop}[thm]{Definition-Proposition}

\theoremstyle{remark}
\newtheorem{rem}[thm]{Remark}
\newtheorem{ex}[thm]{Example}

\numberwithin{equation}{section}

\def\bbQ{\mathbb{Q}}

\def\bbT{\mathbb{T}}

\def\calF{\mathcal{F}}

\def\calO{\mathcal{O}}
\def\calP{\mathcal{P}}

\def\calR{\mathcal{R}}

\newcommand{\cZ}{\mathcal{Z}}

\def\bfs{\mathbf{s}}
\def\bft{\mathbf{t}}

\def\uep{\underline{\ep}}

\def\bfep{\boldsymbol{\epsilon}}
\def\bfla{\boldsymbol{\lambda}}
\def\bflaep{\boldsymbol{\epsilon\lambda}}
\def\bfmu{\boldsymbol{\mu}}
\def\bfmue{\boldsymbol{\epsilon\mu}}

\def\bfA{\mathbf{A}}

\def\U{\mathbf{U}}

\def\mod{\operatorname{-mod}\nolimits}

\def\wt{\operatorname{wt}\nolimits}

\def\ep{\epsilon}
\def\gl{\mathfrak{gl}}
\def\sln{\mathfrak{sl}}

\def\la{\lambda}

\def\ula{{\underline{\lambda}}}

\def\ov{\overline}

\def\Col{\text{Col}}
\def\Row{\text{Row}}
\def\Std{\text{Std}}

\newcommand{\brown}[1]{\textcolor{brown}{ #1}}
\newcommand{\green}[1]{\textcolor{green}{ #1}}

\newcommand{\orange}[1]{\textcolor{orange}{ #1}}

\newcommand{\mc}{\mathcal}
\newcommand{\mf}{\mathfrak}

\newcommand{\C}{\mathbb C}
\newcommand{\Q}{\mathbb Q}

\newcommand{\oa}{{\bar 0}}
\newcommand{\ob}{{\bar 1}}
\newcommand{\ad}{\mathrm{ad}}

\newcommand{\fg}{\mathfrak{g}}

\newcommand{\h}{\mathfrak{h}}
\newcommand{\N}{\mathbb{Z}_{\geq 0}}

\newcommand{\g}{\mathfrak{g}}

\newcommand{\fu}{\mathfrak{u}}

\newcommand{\Z}{{\mathbb Z}}

\newcommand{\glnm}{\mf{gl}_{n|m}}
\newcommand{\Ofull}{\mathcal{O}(\bflaep)}
\newcommand{\Pcat}{\mathcal{P}(\bflaep)}
\newcommand{\VV}{\mathbb V}

\newcommand{\Wg}{U(\g,\bflaep)}
\newcommand{\Wgl}{U(\glnm,\bflaep)}
\newcommand{\Wl}{U(\mf l,\bflaep)}

\begin{document}

\title[Simple character formulas for finite $W$-superalgebras]{Simple character formulas for finite $W$-superalgebras of type $A$}

\author[Shun-Jen Cheng]{Shun-Jen Cheng}
\address{Institute of Mathematics, Academia Sinica, Taipei, Taiwan 10617} \email{chengsj@math.sinica.edu.tw}
\author[Weiqiang Wang]{Weiqiang Wang}
\address{Department of Mathematics, University of Virginia, Charlottesville, VA 22903, USA}\email{ww9c@virginia.edu}

\subjclass[2020]{Primary 17B10, 17B37.}
\keywords{Finite $W$-superalgebas, canonical basis, Kazhdan-Lusztig theory}

\begin{abstract}
We establish a canonical basis character formula for the irreducible modules in arbitrary parabolic BGG-type categories, including the category of finite-dimensional modules, for finite $W$-superalgebras of type $A$. These categories categorify the tensor product modules of irreducible polynomial representations and their duals over a quantum group of type $A$. Moreover, the standard modules and irreducible modules in these categories categorify the standard basis and Lusztig's dual canonical basis in the tensor product modules. Our formula provides a uniform generalization of several character formulas in BGG categories for Lie superalgebras and for $W$-algebras of type $A$.
\end{abstract}

\maketitle

\setcounter{tocdepth}{1}
\tableofcontents

\thispagestyle{empty}

%\vspace{1em}

%\begin{quote}
%\begin{center}
%{\em Dedicated to George Lusztig with admiration and appreciation.}
%\end{center}
%\end{quote}

\vspace{1em}

%%%%%%%%%%%
%%%%%%%%%%%
\section{Introduction}

\subsection{The goal}

Finite $W$-algebras $U(\g,e)$ associated with a nilpotent element $e$ of  a semisimple Lie algebra $\g$ can be viewed as a generalization of the enveloping algebras $U(\g)=U(\g,0)$, and they quantize Slodowy slices; see \cite{Los10, Wan11} for surveys. A BGG-type category $\calO$ of $U(\g,e)$-modules including Verma modules was introduced in \cite{BGK08}, with parabolic categories including parabolic Verma $U(\g,e)$-modules subsequently formulated in \cite{Los12} and \cite{BG13}. The counterparts of Cartan and Levi of $U(\g,e)$ are again finite $W$-algebras, and a novelty here is that they are subquotients instead of subalgebras of $U(\g,e)$. It is straightforward to extend these definitions to the general linear Lie superalgebras $\g =\glnm$, which will be the main focus of this paper.

A milestone in representation theory is the Kazhdan-Lusztig theory for the BGG category $\calO$ of semisimple Lie algebras \cite{KL79, BB81, BK81}, where the canonical basis of Hecke algebras plays a key role. Brundan \cite{Bru03} formulated a super Kazhdan-Lusztig conjecture for $\glnm$ (now a theorem of \cite{CLW15}; see also \cite{BLW17}), where Lusztig's canonical basis \cite{Lus92, Lus10} on tensor product modules $\VV^{\otimes n}\otimes (\VV^{-})^{\otimes m}$ of the quantum group $\U=\U(\gl_\infty)$ is used in place of canonical basis of Hecke algebras; here $\VV=\VV^+$ denotes the natural representation of $\U$ and $\VV^-$ its restricted dual. For the usual type $A$, this amounts to a reformulation of parabolic Kazhdan-Lusztig basis \cite{KL79, Deo87} in terms of canonical basis on the tensor module $\VV^{\otimes n}$ \cite{FKK98} via the Jimbo-Schur duality \cite{Jim86}.

\vspace{2mm}

The goal of this paper is to develop a Kazhdan-Lusztig theory for finite $W$-superalgebras of type $A$ in great generality. That is, we shall formulate and establish a canonical basis character formula for the irreducible modules in arbitrary parabolic BGG-type categories for finite $W$-superalgebras of type $A$, including all finite-dimensional irreducibles. Our formula provides a uniform generalization of several character formulas in BGG categories for Lie superalgebras and for $W$-algebras of type $A$. Our formula is new in the super setting beyond the BGG category for $\glnm$, and new in the non-super setting beyond the category of finite-dimensional $U(\gl_n,e)$-modules. We provide more detailed comments on the relevant works \cite{BK08, Los12, CCM23, CW25} in \S \ref{ssec:related}.

\subsection{Dual canonical and standard bases}

Given a partition $\la$ and a sign $\ep$, we have the associated $q$-symmetric powers $S^\la(\VV^\ep)$ in \eqref{eq:Sla}.
We also define the $q$-exterior powers $\wedge^{\la'} (\VV^\ep)$ and a polynomial representation $P^\la(\VV^\ep)$ which fit into the following sequence of homomorphisms (see \eqref{def:xilambda}--\eqref{EPS:V}; cf. \cite{Bru06,BK08}):
\begin{align*}
    \wedge^{\la'} (\VV^\ep) \twoheadrightarrow P^\la(\VV^\ep) \hookrightarrow S^{\la} (\VV^\ep).
\end{align*}
A signed multi-partition $\bflaep$ is a pair $(\bfla,\bfep)$ with a multi-partition $\bfla =(\la^{(1)},\ldots, \la^{(r)})$ and a sign sequence $\bfep=(\ep_1,\ldots,\ep_r)$, where $\ep_i=\pm$. Associated with $\bflaep$, we define
\[
S^{\bflaep}(\VV) =S^{\la^{(1)}} (\VV^{\ep_1}) \otimes \ldots \otimes S^{\la^{(r)}} (\VV^{\ep_r}),
\qquad
P^{\bflaep}(\VV) =P^{\la^{(1)}} (\VV^{\ep_1}) \otimes \ldots \otimes P^{\la^{(r)}} (\VV^{\ep_r}).
\]
We have a natural embedding $\jmath: P^{\bflaep}(\VV)\hookrightarrow S^{\bflaep}(\VV).$

Recall from \eqref{def:NA} the basis $\{\Pi_\bfA\mid \bfA \in \Row(\bflaep)\}$ for $S^{\bflaep}(\VV)$  and from \eqref{def:DeltaA} (also see \eqref{def:VA}) the standard basis $\{\Delta_\bfA\mid \bfA \in \Std(\bflaep)\}$ for $P^{\bflaep}(\VV)$. The following is a multi-partition generalization of some constructions from \cite{Bru06}, where the hat denotes a suitable completion. (As in loc.~cit., our constructions admit finite rank variants and then the hat can be removed.) As completed tensor product $\U$-modules, $\widehat S^{\bflaep}(\VV)$ and $\widehat P^{\bflaep} (\VV)$ admit bar involutions. Lusztig \cite{Lus92, Lus10} constructed (dual) canonical bases on tensor product modules more generally and somewhat differently. The standard basis $\{\Delta_\bfA\}$, which does not appear in Lusztig's construction, plays a crucial role here and admits a categorification.

\begin{alphatheorem} [Theorem \ref{thm:DCB:Slambda}, Theorem \ref{thm:DCB_P}, Proposition \ref{prop:sameDCB}]
\label{thm:A}
\qquad
\begin{enumerate}
    \item
    There exists a dual canonical basis $\{L_\bfA\mid \bfA\in \Row(\bflaep)\}$ on $\widehat S^{\bflaep}(\VV)$, which is characterized by bar invariance $\overline{L_\bfA}=L_\bfA$ and
   $L_\bfA \in \Pi_\bfA +\sum_{\bfA'\prec_{\bfs_{\bflaep}} \bfA} q^{-1}\Z[q^{-1}] \Pi_{\bfA'}.$
    \item
    There exists a dual canonical basis $\{L_\bfA \mid \bfA\in \Std(\bflaep)\}$ for $\widehat P^{\bflaep} (\VV)$, which is characterized by $\ov{L_\bfA}=L_\bfA$ and
   $L_\bfA \in \Delta_\bfA + \sum_{\bfA'\prec_{T,\bfep} \bfA} q^{-1} \Z[q^{-1}] \Delta_{\bfA'}.$
   \item
   Dual canonical bases are compatible under the embedding $\jmath:\widehat P^{\bflaep} (\VV) \hookrightarrow \widehat S^{\bflaep}(\VV)$.
\end{enumerate}
\end{alphatheorem}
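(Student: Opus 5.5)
The plan is the standard Lusztig-type argument: establish a bar involution compatible with a partial order and unitriangular on the relevant basis, then invoke the general existence and uniqueness lemma for bar-invariant bases (cf.\ \cite[Lemma 24.2.1]{Lus10}), adapted to completions as in \cite{Bru03, Bru06}. For part (1) I would start from the tensor product $\VV^{\ep_1\otimes \la^{(1)}_1}\otimes\cdots$. This is a completed tensor product of copies of $\VV^\pm$, and Brundan \cite{Bru03} equips it with a bar involution $\psi$ built from the quasi-$R$-matrix $\Theta$. Its dual canonical basis exists because $\psi$ is unitriangular on the monomial basis with respect to the Bruhat-type order, with the completion handling infinite sums. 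The $q$-symmetric power $S^{\la}(\VV^\ep)$ is then realized as a quotient of a tensor power (or as the image of a $q$-symmetrizer). Since the quotient map is a $\U$-module map and $\Theta$ acts compatibly, the quotient inherits the bar involution. Concretely, I would show that $\ov{\Pi_\bfA}\in \Pi_\bfA+\sum_{\bfA'\prec_{\bfs_\bflaep}\bfA}\Z[q,q^{-1}]\Pi_{\bfA'}$. To do this, I would express $\Pi_\bfA$ as the image of a monomial with row entries weakly increasing and use that the images of non-increasing monomials straighten (via $q$-commutation relations in $S^\la$) into combinations of $\Pi$'s that are lower in $\prec_{\bfs_\bflaep}$. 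Once this holds, the standard argument gives existence and uniqueness of $L_\bfA$. It requires that each interval $\{\bfA'\preceq\bfA\}$ be handled within the completion, i.e.\ that the coefficients can be solved degree-by-degree. I would therefore check that $\prec_{\bfs_\bflaep}$ is compatible with the topology, meaning each $\bfA$ has finitely many predecessors of bounded ``degree''. This mirrors the finite-rank truncation argument in \cite{Bru06}, which also lets the hat be removed in finite rank.

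For part (2) I would proceed the same way with the standard basis $\Delta_\bfA$ of $P^{\bflaep}(\VV)$. Here $P^\la(\VV^\ep)$ is the image of $\wedge^{\la'}(\VV^\ep)\to S^\la(\VV^\ep)$. This map is a $\U$-homomorphism between modules carrying bar involutions. Both involutions come from $\Theta$ together with the bar maps on the tensor factors, so the map commutes with bar. Hence $\widehat P^{\bflaep}(\VV)$ is a bar-stable subspace. The key computation is that $\ov{\Delta_\bfA}\in\Delta_\bfA+\sum_{\bfA'\prec_{T,\bfep}\bfA}\Z[q,q^{-1}]\Delta_{\bfA'}$. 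I would derive this by writing $\Delta_\bfA$ as the image of a column-strict monomial in the exterior power, where triangularity follows from part (1)'s argument applied to $\wedge$ with columns in place of rows. I would then push forward along the surjection and use that images of non-standard tableaux straighten (Garnir-type relations) into lower standard ones. The lemma then again yields $L_\bfA$.

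For part (3), I would show that for $\bfA\in\Std(\bflaep)$ the element $\jmath(L_\bfA)$ is bar invariant and lies in $\Pi_{\bfA^\flat}+\sum q^{-1}\Z[q^{-1}]\Pi$ for the appropriate row tableau $\bfA^\flat$. Uniqueness in part (1) then forces $\jmath(L_\bfA)=L_{\bfA^\flat}$. This requires the expansion $\jmath(\Delta_\bfA)\in\Pi_{\bfA^\flat}+\sum q^{-1}\Z[q^{-1}]\Pi$ with respect to lower terms, together with compatibility of the orders $\prec_{T,\bfep}$ and $\prec_{\bfs_\bflaep}$ under $\bfA\mapsto\bfA^\flat$. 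I expect this expansion to be the main obstacle. The image of a standard monomial under antisymmetrize-then-symmetrize has many terms, and one must show that the leading term has coefficient exactly $1$ and all others lie in $q^{-1}\Z[q^{-1}]$. My first attempt would reduce to a single partition and sign, since both $\jmath$ and the orders are defined factorwise apart from the $\Theta$-twist. I would then use the explicit formulas of \cite{Bru06, BK08} for the composite $\wedge^{\la'}\to S^\la$ via the $q$-Young symmetrizer, controlling the $q$-powers by length counts of shortest coset representatives.
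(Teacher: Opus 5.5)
Your plan for parts (1) and (3) is essentially the paper's. For (1), you descend the bar involution from $\bbT^{\bfs_{\bflaep}}$ through the $q$-symmetrizer and show $\ov{\Pi_{\bfA}}\in\Pi_{\bfA}+\sum_{\bfA'\prec_{\bfs_{\bflaep}}\bfA}\cZ\,\Pi_{\bfA'}$. This holds because a monomial with unsorted rows is a power of $q$ times $\Pi$ of its row-sorted rearrangement, which is lower. You then apply \cite[Lemma~24.2.1]{Lus10}. For (3), you check that $\jmath(L_{\bfA})$ satisfies the characterization in (1). Since $\Std(\bflaep)\subseteq\Row(\bflaep)$, your $\bfA^\flat$ is just $\bfA$. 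The two points you flag for (3) are exactly what the paper leaves to \cite{Bru06} and ``we skip the detail''.

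The gap is in (2). On $\bigotimes_i\wedge^{\la^{(i)\prime}}(\VV^{\ep_i})$ the bar involution is triangular for the Bruhat order on column-reversed column readings. That order compares weights of upper segments of a column together with everything to its right. By contrast, $\prec_{T,\bfep}$ compares the block weights $\wt^i_{\bfep}$ and, within a block, the weights of the bottom rows $A_{\le r}$. To push $\ov{\mc K_{\bfA}}=\mc K_{\bfA}+\sum_{\mathbf{B}}c_{\mathbf{B}}\mc K_{\mathbf{B}}$ through $\xi$, you must show term by term that $\xi(\mc K_{\mathbf{B}})$ lies in the span of the $\Delta_{\bfA'}$ with $\bfA'\prec_{T,\bfep}\bfA$. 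Even for standard $\mathbf{B}$ this is a nontrivial comparison between two different orders. For non-standard $\mathbf{B}$ you additionally need $q$-straightening with control of the leading coefficients. Garnir relations only say the images are spanned by standard ones; they give no control in the row-suffix order $\le_{T,\ep}$.

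For a single partition, this step is precisely the statement $\ov{V_A}\in V_A+\sum_{A'<_{T,\ep}A}\cZ\,V_{A'}$ from \cite{Bru06} (cf.\ \cite[(4.12)]{BK08}). The paper imports it; you neither prove nor cite it. Relatedly, ``$\xi_\la$ commutes with bar because both bars come from $\Theta$'' is not a valid reason. The map $\xi_\la$ contains the $R$-matrix $\calR_\la$, and $R$-matrices do not commute with bar involutions (compare $\ov{H_i}=H_i^{-1}$ in $\mathcal H_k$). The compatibility does hold for this particular composite, but it needs its own argument and is again part of the single-partition input.

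Given that input, the paper finishes with the quasi-$R$-matrix: $\Theta=1\otimes1+\sum_{\gamma\in\texttt{P}^+\setminus0}\Theta_\gamma$ with $\Theta_\gamma\in\U^+_\gamma\otimes\U^-_{-\gamma}$. Each $\Theta_\gamma$ strictly lowers the block weights, which is exactly why $\prec_{T,\bfep}$ asks for $A'^{(i)}\le_{T,\ep_i}A^{(i)}$ only when all $\wt^i_{\bfep}$ agree. Your remark that the column Bruhat order at block boundaries is governed by $\wt^i_{\bfep}$ is the same step in another guise. So once the single-partition result is imported, your argument can be completed along these lines.

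Alternatively, you can avoid exterior powers altogether. You already need the expansion $\jmath(\Delta_{\bfA})\in\Pi_{\bfA}+\sum_{\mathbf{B}\prec_{\bfs_{\bflaep}}\bfA}\cZ\,\Pi_{\mathbf{B}}$ for (3). Combined with (1), bar-compatibility of $\jmath$, and a leading-term argument, it gives triangularity of $\ov{\Delta_{\bfA}}$ for $\prec_{\bfs_{\bflaep}}$ on $\Std(\bflaep)$. Finally, $\rho(\bfA')\prec_{\bfs_{\bflaep}}\rho(\bfA)$ implies $\bfA'\prec_{T,\bfep}\bfA$, as you see by looking at the suffix weights at block and row boundaries.
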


\subsection{Simple character formulas}

Given a signed multi-partition $\bflaep$ of $(n|m)$-type, we attach an even nilpotent element in Lie superalgebra $\g=\glnm$ of Jordan type $(\la^+|\la^-)$; see \eqref{def:la_plusminus}. Different sign sequences $\bfep$ correspond to different choices of non-conjugate Borel subalgebras in $\glnm$; cf. \cite[\S 1.3.2]{CW12}. Consider the corresponding $W$-superalgebra $\Wg$ and form a parabolic category $\Pcat$ of $\Wg$-modules, following Losev \cite[Section 3]{Los12} and Brown-Goodwin \cite[Section 3]{BG13}.

Brundan-Kleshchev \cite{BK08} obtained a character formula for the finite-dimensional irreducible $U(\gl_n,\la)$-modules via dual canonical basis on the polynomial $\U$-module $P^\la(\VV)$, where finite-dimensional standard modules introduced in their paper play an important role. Standard modules $\Delta(\bfA)$ in $\Pcat$ are defined in \eqref{module:DeltaA} to be parabolically induced modules from tensor products of finite-dimensional standard modules, and they are not parabolic Verma modules in general. Denote by $[\Pcat]$ the Grothendieck group of $\Pcat$ and the hat denotes some suitable completion.

\begin{alphatheorem} [Theorem \ref{thm:char}]
\label{thm:B}
\qquad
 \begin{enumerate}
     \item
There is a $\Z$-linear isomorphism
$%    \begin{align*}
    \Psi: [\Pcat]^\wedge \rightarrow \widehat P^{\bflaep} (\VV_\Z),
$
%    \end{align*}
    which sends the classes of standard modules $[\Delta(\bfA)]$ to the standard basis $\Delta_\bfA$, for $\bfA \in \Std(\bflaep)$;
    \item
    The map $\Psi$ sends the classes of simple modules to the dual canonical basis, i.e., $\Psi([L(\bfA)]) = L_\bfA$, for $\bfA \in \Std(\bflaep)$.
\end{enumerate}
\end{alphatheorem}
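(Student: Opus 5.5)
We will deduce Theorem~\ref{thm:B} from the known super Kazhdan--Lusztig theory for $\glnm$, using a Skryabin/Losev-type functor together with the compatibility in Theorem~\ref{thm:A}(3).

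\textbf{Step 1: the map $\Psi$.} We define $\Psi$ on standard modules by $[\Delta(\bfA)]\mapsto \Delta_\bfA$. To see that this is well defined and bijective after completion, we show that $\{[\Delta(\bfA)]\mid \bfA\in\Std(\bflaep)\}$ is a topological basis of $[\Pcat]^\wedge$. Each $\Delta(\bfA)$ has simple head $L(\bfA)$, and the multiplicities $[\Delta(\bfA):L(\bfA')]$ vanish unless $\bfA'\preceq_{T,\bfep}\bfA$. This follows by computing highest weights of the parabolically induced modules: the Levi $W$-algebra $\Wl$ is a tensor product of $U(\gl,\la^{(i)})$-type algebras, and on each factor we have the Brundan--Kleshchev triangularity for finite-dimensional standard modules. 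Upper unitriangularity then gives that the $[\Delta(\bfA)]$ form a basis of the completion.

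\textbf{Step 2: identifying simple characters.} We compare $\Pcat$ with a parabolic category $\calO$ for $\glnm$ via Losev's equivalence/decomposition functor, in the form of \cite{Los12, BG13}, extended to the super setting. The plan is to realize $\Pcat$ as a (Whittaker-type) quotient of a parabolic BGG category $\calO$ for $\glnm$, taken with respect to the Borel determined by $\bfep$. Under this functor, parabolic Verma modules go to standard-type modules, and simples go to simples or to zero. On the $\U$-side, this matches the embedding $\jmath: \widehat P^{\bflaep}(\VV)\hookrightarrow \widehat S^{\bflaep}(\VV)$ composed with the identification of $\widehat S^{\bflaep}(\VV)$ with the Grothendieck group of a parabolic category $\calO$ for $\glnm$. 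Here Brundan's super KL conjecture, proved in \cite{CLW15, BLW17} in parabolic form, sends simples to the dual canonical basis $\{L_\bfA\mid \bfA\in\Row(\bflaep)\}$ of Theorem~\ref{thm:A}(1).

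\textbf{Step 3: transporting back.} Since $\jmath$ preserves dual canonical bases (Theorem~\ref{thm:A}(3)), $\Psi([L(\bfA)])$ is bar-invariant and lies in $\Delta_\bfA+\sum q^{-1}\Z[q^{-1}]\Delta_{\bfA'}$, with $q$ specialized via a graded lift. Theorem~\ref{thm:A}(2) then forces $\Psi([L(\bfA)])=L_\bfA$.

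\textbf{Main obstacle.} The delicate point is to check that the quotient functor sends $\Delta(\bfA)$ exactly to the element corresponding to $\jmath(\Delta_\bfA)$. Since $\Delta(\bfA)$ is not a parabolic Verma module, this requires expressing each finite-dimensional standard factor as an alternating sum (via $\wedge^{\la'}\twoheadrightarrow P^\la\hookrightarrow S^\la$) of induced modules and verifying the matching on Grothendieck groups. A second difficulty is justifying the $q$-positivity and bar-invariance at $q=1$. To handle it, we would avoid gradings and instead argue via the characterization that $[L(\bfA)]$ lies in the image of $\jmath$ and equals the image of a simple $\glnm$-module, whose class is already the dual canonical basis element $L_\bfA\in \widehat S^{\bflaep}(\VV)$.
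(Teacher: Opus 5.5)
Your fallback skeleton is the paper's. You map $[\Pcat]^\wedge$ into a larger Grothendieck group where simples are already known to go to $L_\bfA\in\widehat S^{\bflaep}(\VV_\Z)$. You then use $\jmath(L_\bfA)=L_\bfA$ (Proposition~\ref{prop:sameDCB}) and injectivity of $\jmath$ at $q=1$. No graded lift is needed, and none is available, so the first version of your Step~3 should be dropped.

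Step~2, however, sets up the wrong comparison. In general $\widehat S^{\bflaep}(\VV_\Z)$ is not the Grothendieck group of a parabolic BGG category of $\glnm$: the parabolic Brundan--Kazhdan--Lusztig theorem describes simples via dual canonical bases of $q$-wedge tensor products $\wedge^{\bfmue}(\VV)$, not of $S^{\bflaep}(\VV)$. Moreover, a Whittaker-type functor \emph{from} a BGG category \emph{to} $\Pcat$ induces a map \emph{into} $[\Pcat]$. That matches a surjection like $\xi_\la:\wedge^{\la'}(\VV)\twoheadrightarrow P^\la(\VV)$, never the injection $\jmath$ out of $P^{\bflaep}(\VV)$. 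The categorical counterpart of $\jmath$ is simply the inclusion $\Pcat\subset\Ofull$ into the full category $\calO$ of $\Wg$. The input you need is Theorem~\ref{thm:Char_fullO}: $[\Ofull]^\wedge\cong\widehat S^{\bflaep}(\VV_\Z)$ with $[L(\bfA)]\mapsto L_\bfA$. This comes from two ingredients:
\begin{enumerate}
\item the Whittaker-module character formula of \cite{CCM23}, which rests ultimately on BKL for the full BGG category, with $L_\bfA=\pounds_{\rho(\bfA)}\texttt{Sym}_{\bfla}$;
\item the equivalence between $\Ofull$ and a Whittaker category \cite{CW25, SX20}.
\end{enumerate}

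The genuine gap is the step you call the main obstacle, which is the heart of the proof and which you leave unresolved. You must show that $[\Delta(\bfA)]$, viewed in $[\Ofull]$, maps to $\jmath(\Delta_\bfA)=\sum_{\sigma\in C_\bfA}(-1)^{\ell(\sigma)}M_{\sigma\bfA}$. The Weyl character formula only expresses $[V(A^{(i)})]$ as an alternating sum of Miura pullbacks $M(\sigma_1A^{(i)}_1)\boxtimes\cdots\boxtimes M(\sigma_{\wp_i}A^{(i)}_{\wp_i})$ of tensor products of $\gl$-Verma modules. These are not Verma modules of the $W$-algebra, and nothing in your outline relates them to the $M(\sigma A^{(i)})$.

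The missing ingredient is the Verma character identity $[M(B)]=[M(B_1)\boxtimes\cdots\boxtimes M(B_\wp)]$. This is Brundan--Kleshchev \cite[Corollary~6.3]{BK08}, with super generalization by Lu--Peng (Theorem~\ref{thm:LPcharVerma}), a deep result proved via Yangians and Gelfand--Tsetlin characters. With it, plus transitivity of parabolic induction, one gets $[\Delta(\bfA)]=\sum_{\sigma\in C_\bfA}(-1)^{\ell(\sigma)}[M(\sigma\bfA)]$ in $[\Ofull]$. This is exactly the commutativity $\Psi_o\circ\jmath_o=\jmath\circ\Psi$ that your argument needs. Without it, Step~2 cannot be completed.
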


We outline the strategy of a proof of Theorem \ref{thm:B}. A signed multi-partition $\bflaep$ gives rise to a refined signed multi-partition $\uep\,\ula$ in \eqref{def:ula_uep}. The category $\calP(\ula,\uep)$ associated with $\uep\,\ula$ in \eqref{def:ula_uep} is the full category of $\Wg$-modules denoted by $\Ofull$, and we have an identification of $\U$-modules $S^{\bflaep}(\VV) = P^{\uep\,\ula}(\VV)$. We first establish Theorem \ref{thm:B} for $\Ofull$ by applying $\bfep$-variants of the main results from \cite{CCM23} and \cite{CW25} which pass through categories of Whittaker modules; the formulations in loc.~cit.~treated the case where $\bfep$ is the standard sign sequence with $+$'s followed by $-$'s.

Theorem \ref{thm:B} in full generality follows from this full category $\calO$ version and Theorem~ \ref{thm:A}, once we establish the following commutative diagram:
\begin{equation*}
\begin{tikzcd}
{[}\Pcat{]}^\wedge \ar[r,"\jmath_o"]\ar[d,"\Psi"]&{[}\Ofull{]}^\wedge \ar[d,"\Psi\!_o"]\\
\widehat P^{\bflaep} (\VV_\Z) \ar[r,"\jmath"]& \widehat S^{\bflaep} (\VV_\Z)
\end{tikzcd}
\end{equation*}
where $\jmath$ and $\jmath_o$ are natural embeddings.

The main ingredient for proving the above commutative diagram is the following character identity for the classes of standard modules $\Delta(\bfA)$ in terms of another family of modules denoted by $N(\bfA)$; recall $N(\bfA)$ is defined in \eqref{def:moduleNA} as the pullback of a parabolic Verma module over the Levi via Miura transform.

\begin{alphatheorem} [Theorem \ref{thm:charStandard}]
\label{thm:C}
  The following identity holds in  $[\Pcat]$:
\[
[\Delta(\bfA)] =[N(\bfA)], \qquad \text{ for } \bfA\in\Std(\bflaep).
\]
\end{alphatheorem}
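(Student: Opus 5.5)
Both sides of the identity are parabolically induced from the Levi-type $W$-superalgebra $\Wl=U(\gl_{n_1|m_1},\la^{(1)})\otimes\cdots\otimes U(\gl_{n_r|m_r},\la^{(r)})$ (with the signs $\epsilon_i$ fixing which block is even). I will first reduce Theorem \ref{thm:C} to a statement about a single block, and then verify that statement using the Brundan--Kleshchev theory of finite-dimensional standard modules.

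Write $\bfA=(A^{(1)},\ldots,A^{(r)})$ with $A^{(i)}$ a standard tableau of shape $\la^{(i)}$. By \eqref{module:DeltaA}, $\Delta(\bfA)$ is parabolically induced from the $\Wl$-module $\Delta(A^{(1)})\boxtimes\cdots\boxtimes\Delta(A^{(r)})$. Here each $\Delta(A^{(i)})$ is a Brundan--Kleshchev finite-dimensional standard module of $U(\gl_{n_i|m_i},\la^{(i)})$; it is purely even or purely odd according to $\epsilon_i$. By \eqref{def:moduleNA}, $N(\bfA)$ is the pullback through the Miura transform of a parabolic Verma module over the Levi subalgebra associated to the column (or row) decomposition of $\la^{(i)}$.

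The first step is to show that both constructions factor through the $\Wl$-level. Parabolic induction $\Wl\mod\to\Pcat$ is exact on the relevant category, since it is a tensor product with a free module over the opposite nilradical part, as in \cite{Los12, BG13}. Transitivity of Miura transforms, obtained by composing the Levi inclusions, then identifies $N(\bfA)$ with the parabolic induction of the Levi-level module $N(A^{(1)})\boxtimes\cdots\boxtimes N(A^{(r)})$, at least on the level of characters (Gelfand--Tsetlin or highest-weight characters). Grothendieck classes in $\Pcat$ are determined by such characters, because $\Pcat$ has a character theory in which the formal characters of simples are linearly independent. Hence it suffices to prove $[\Delta(A)]=[N(A)]$ for a single block $U(\gl_n,\la)$, where $\gl_n$ is either the even or the odd part.

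For a single block, I will use Brundan--Kleshchev \cite{BK08}. There $\Delta(A)$ is defined as a tensor product, via the comultiplication/Miura map, of one-dimensional or evaluation modules attached to the columns of $A$, and its Gelfand--Tsetlin character is computed as a product over columns. I will show that $N(A)$ has the same Gelfand--Tsetlin character. The parabolic Verma module of the Levi $\gl_{\la'_1}\oplus\cdots$ has character equal to a product of characters of finite-dimensional irreducibles, namely exterior powers. Under the Miura transform these restrict to the column factors of \cite{BK08}. The characters of $\Delta(A)$ and $N(A)$ therefore coincide, and since the modules have finite length this gives equality of Grothendieck classes.

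I expect the main obstacle to be matching the highest-weight (shift) conventions. The Miura transform involves $\rho$-shifts that depend on the sign sequence $\bfep$, and one must check that the shifted weights in $N(\bfA)$ agree with the labels used in $\Delta(\bfA)$, in particular across blocks with different $\epsilon_i$, where the super $\rho$ changes. To handle this, I will first carry out the computation for $r=1$. I will then compare the tensor-product parametrizations block by block, using the signed tableau content conventions of \eqref{def:VA} to check that the shifts cancel.
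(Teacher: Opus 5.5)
\textbf{Verdict: there is a genuine gap.} Your reduction step is circular, and the identity it relies on is not formal; it is the hard part of the theorem.

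\textbf{Why the reduction is circular.} With the paper's definitions, the ``Levi-level'' module you want to induce is already $V(\mf l,\bfA)$. For a single block $\la^{(i)}$, the Levi $\bigoplus_k\gl_{q^{(k)}_j}$ of the $j$th column algebra is all of $\gl_{q^{(i)}_j}$. So the single-block $N(A^{(i)})$ is the Miura pullback of $\bigotimes_j V(A^{(i)}_j,\ep_i)$, which is exactly $V(A^{(i)},\ep_i)$ from \eqref{eq:VA}. Three consequences:
\begin{enumerate}
\item Your single-block claim $[\Delta(A)]=[N(A)]$ is a tautology.
\item Your description of that $N(A)$ via exterior powers does not match \eqref{def:moduleNA}. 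Also, every block of $\mf l$ is an even $\gl_{|\la^{(i)}|}\subseteq\g_\oa$, not a $\gl_{n_i|m_i}$.
\item Your assertion that ``transitivity of Miura transforms'' identifies $N(\bfA)$ with $\mc I^{\g}\big(N(A^{(1)})\boxtimes\cdots\boxtimes N(A^{(r)})\big)$ on characters is literally $[N(\bfA)]=[\Delta(\bfA)]$, the statement to be proved.
\end{enumerate}

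\textbf{Why it is not formal.} $\mc I^{\g}$ is induction through Losev's quotient $U(\g,e)_{\ge 0}\to\Wl$. By contrast, $N(\bfA)$ is a pullback along $\texttt{MT}:U(\g,e)\to U(\g(0))$ of a module induced inside $\g(0)$. Nothing makes these two operations commute. Already in the Verma case $\bfla=\ula$, this commutation on Grothendieck groups is a main technical result of Brundan--Kleshchev (their Corollary 6.3, proved via Gelfand--Tsetlin characters and shifted Yangians). In the super case it is the Lu--Peng theorem recorded as Theorem \ref{thm:LPcharVerma}. Your claim that classes in $\Pcat$ are detected by characters in which simples are linearly independent is also unsupported. $\mf t$-weight characters do not separate simples, and the Gelfand--Tsetlin theory that does is precisely this Yangian input.

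\textbf{The missing idea.} Reduce to Verma modules rather than to a single block, and take the Verma identity (Theorem \ref{thm:LPcharVerma}) as input. This is what the paper does.
\begin{itemize}
\item \emph{Standard side.} Expand $[V(A^{(i)},\ep_i)]$ by the Weyl character formula for the column algebras $\gl_{q^{(i)}_j}$. This gives an alternating sum, over the column stabilizer $C_{A^{(i)}}$, of Miura pullbacks of tensor products of column Vermas. Theorem \ref{thm:LPcharVerma} at the Levi level converts these into $W$-algebra Vermas, so $[V(\mf l,\bfA)]=\sum_{\sigma\in C_\bfA}(-1)^{\ell(\sigma)}[M(\mf l,\sigma\bfA)]$. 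Exactness and transitivity of $\mc I^{\g}$ (Brown--Goodwin) then give $[\Delta(\bfA)]=\sum_{\sigma\in C_\bfA}(-1)^{\ell(\sigma)}[M(\sigma\bfA)]$.
\item \emph{Parabolic Verma side.} Each parabolic Verma $N(\bfA_j)$ of $\gl_{q^+_j|q^-_j}$, taken relative to an even Levi, is the same kind of alternating sum of Vermas. Hence $[N(\bfA)]$ is an alternating sum of the modules $M(\sigma_1\bfA_1)\boxtimes\cdots\boxtimes M(\sigma_\wp\bfA_\wp)$. Theorem \ref{thm:LPcharVerma} at the full $\g$ level turns this into the same expression.
\end{itemize}
The $\rho$-shift bookkeeping you flag is absorbed by the convention \eqref{wt:aijplus}; it is not where the difficulty lies.
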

We derive Theorem \ref{thm:C} from its special case -- the corresponding character identity for Verma modules $M(B)$ in $\Ofull$. This Verma character identity in the non-super setting is a main technical result of Brundan-Kleshchev \cite[Corollary 6.3]{BK08}, and its highly nontrivial super generalization is obtained by Lu-Peng \cite{LP26}. Note that \cite[Corollary~ 6.3]{BK08} is a consequence of (and is actually equivalent to) \cite[Theorem 6.2]{BK08} which provides a Gelfand-Tsetlin character formula for the Verma modules via the connection between finite $W$-algebras and Yangians.

In the case where $\bfla=(\la,\mu)$ and $\bfep=(+,-)$, where $\la$ and $\mu$ are partitions of $n$ and $m$, denote the (maximal) parabolic category $\Pcat$ by $\calF(\la|\mu)$. The category $\calF (\la|\mu)$ can be identified as the category of finite-dimensional $\Wgl$-modules of integer weights.

\begin{alphatheorem} [Theorem \ref{thm:fd_Char}]
\label{thm:D}
Let $\bfla=(\la,\mu)$ and $\bfep=(+,-)$.
Then the $\Z$-module isomorphism $\Psi: [\calF(\la|\mu)]^\wedge \rightarrow
    P^\la(\VV_\Z) \widehat\otimes P^\mu(\VV^-_\Z)$, defined by sending $\Delta(A^{(1)},A^{(2)}) \mapsto \Delta_{(A^{(1)},A^{(2)})}$, maps  $[L(A^{(1)},A^{(2)})]$ to $L_{(A^{(1)},A^{(2)})}$.
\end{alphatheorem}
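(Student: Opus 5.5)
Theorem~\ref{thm:D} is the special case $\bfla=(\la,\mu)$, $\bfep=(+,-)$ of Theorem~\ref{thm:B}, so the plan is to specialize Theorem~\ref{thm:B} and check that the objects match. First, I will confirm that $\calF(\la|\mu)$ is exactly $\Pcat$ for $\bflaep=((\la,\mu),(+,-))$. Here the Levi in the parabolic structure is the whole $\gl_n\oplus\gl_m$ block determined by the Jordan type $(\la|\mu)$. So the parabolic induction defining $\Delta(\bfA)$ in \eqref{module:DeltaA} is just the tensor product $\Delta(A^{(1)})\otimes\Delta(A^{(2)})$ of Brundan--Kleshchev finite-dimensional standard modules over the even part, induced through the odd part. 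I would record that these modules are finite-dimensional, and that $\calF(\la|\mu)$ is the category of finite-dimensional integral-weight $\Wgl$-modules, as asserted before the statement. On the $\U$-side, $P^{\bflaep}(\VV)=P^\la(\VV)\otimes P^\mu(\VV^-)$ and $\Std(\bflaep)$ consists of pairs $(A^{(1)},A^{(2)})$ of standard tableaux. The map $\Psi$ of Theorem~\ref{thm:D} is therefore literally the $\Psi$ of Theorem~\ref{thm:B}(1), and Theorem~\ref{thm:B}(2) gives $\Psi([L(A^{(1)},A^{(2)})])=L_{(A^{(1)},A^{(2)})}$.

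Second, I will justify the completion carefully. In this maximal parabolic case the completion $P^\la(\VV_\Z)\widehat\otimes P^\mu(\VV^-_\Z)$ is only needed because $\VV^{\pm}$ are infinite rank. A finite-dimensional module has finitely many composition factors, so each $[L]$ expands as a finite sum of $[\Delta]$'s. Hence the statement holds already at the level of finite sums, and the hat only affects which formal sums are allowed. I would check that the partial order $\prec_{T,\bfep}$ from Theorem~\ref{thm:A}(2) governs the unitriangularity, so that the $\Delta$-expansion of $L_\bfA$ in $\widehat P^{\bflaep}$ is compatible with finite-length Jordan--H\"older multiplicities.

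The main obstacle is making sure the hypotheses under which Theorem~\ref{thm:B} was proved apply when the parabolic is maximal. In particular, the embedding $\jmath_o:[\Pcat]^\wedge\to[\Ofull]^\wedge$ must send $[\Delta(\bfA)]$ to $[N(\bfA)]$ via Theorem~\ref{thm:C}, and simple modules of $\calF(\la|\mu)$ must remain simple in $\Ofull$. I would verify the latter by noting that $\calF(\la|\mu)$ is a Serre subcategory of $\Ofull$, since it is closed under subquotients of finite-dimensional modules with integral weights. Then the diagram chase in the proof of Theorem~\ref{thm:B} applies verbatim.
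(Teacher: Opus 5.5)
Your main line is the paper's own proof. $\calF(\la|\mu)$ is by definition $\Pcat$ for $\bflaep=((\la,\mu),(+,-))$, so the statement is literally Theorem~\ref{thm:char}(2) specialized. That theorem is proved for arbitrary signed multi-partitions, so the hypotheses you propose to re-check in your third paragraph need no re-verification. Likewise, finite-dimensionality of the modules in $\calF(\la|\mu)$ belongs to part (1) of Theorem~\ref{thm:fd_Char}, where the paper gets it from the PBW theorem for $U(\g,e)$; it is not needed here.

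However, your second paragraph is wrong and should be dropped. Finite length tells you that each $[\Delta(\bfA)]$ is a finite sum of classes $[L(\bfB)]$, not the converse. Inverting the unitriangular decomposition matrix over the infinite poset $(\Std(\bflaep),\preceq_{T,\bfep})$ generally produces infinite sums, and this is exactly why the completion is there (cf.\ ``infinite sums are allowed'' in Theorem~\ref{thm:DCB_P}).

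A counterexample already occurs for $\la=\mu=(1)$. There $e=0$, $\Wgl=U(\gl(1|1))$, and the $\Delta$'s are Kac modules. In an atypical block $[K(\nu)]=[L(\nu)]+[L(\nu-\alpha)]$, so $[L(\nu)]=\sum_{k\ge0}(-1)^k[K(\nu-k\alpha)]$. Comparing coefficients of the $[L(\nu-k\alpha)]$ shows that no finitely supported $\Z$-combination of Kac classes equals $[L(\nu)]$. Correspondingly, $L_{\bfA}$ is an infinite sum of $\Delta_{\bfA'}$'s in $P^\la(\VV)\widehat\otimes P^\mu(\VV^-)$.

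So the hat is essential, not cosmetic: $[L(\bfA)]$ typically lies in $[\calF(\la|\mu)]^\wedge$ but not in the span of the standard classes, and the statement does not hold ``at the level of finite sums.'' This does not break your argument, since Theorem~\ref{thm:char} is already formulated on the completions, but the claim as written is false.
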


\subsection{Relations to other works}
\label{ssec:related}

Several very distinguished cases of Theorem~\ref{thm:B} have been established in the literature in the past decades. The statements in these cases are powerful theorems in their own right and there was no a priori indication that they share a vast uniform generalization in the finite $W$-superalgebra setting as formulated in Theorem \ref{thm:B}.

In the non-super setting, notation gets much simplified as the sign sequences and $\VV^-$ are not needed. Theorem~\ref{thm:B} for finite $W$-algebra $U(\gl_n,\la)$ is new, with the following exceptions. In the case when the multi-partition $\bfla$ is a partition $\la$, the category reduces to the category of {\em finite-dimensional} $U(\gl_n,\la)$-modules, and the character formula in Theorem \ref{thm:D} was a main result of \cite[Theorem~F]{BK08}. In another extreme case when $\bfla=\ula$ and the parabolic category is the full category $\calO(\la)$, we observe that the character formula in Theorem \ref{thm:B} follows by a combination of the following results:

$\triangleright$ a character formula of  Mili\v{c}i\'{c}-Soergel (MS) \cite{MS97} for simple Whittaker modules;

$\triangleright$ a reformulation \cite{CCM23} via dual canonical basis in $S^\la(\VV)$ of MS character formula;

$\triangleright$ a category equivalence \cite{Los12} between $\calO(\la)$ and a category of Whittaker modules.

This proves \cite[Conjecture~ 7.17]{BK08}; also cf. \cite{VD95}.
For $\la=(1^n)$, Theorem \ref{thm:B} further specializes to the classic theorems \cite{BB81, BK81} on Kazhdan-Lusztig conjecture in type $A$.

In the super setting (with both $n, m$ nonzero), Theorem~\ref{thm:B} was known in a distinguished case. In case that all partitions in $\bfla$ are of one-column,  Theorem~\ref{thm:B} reduces to the celebrated Brundan-Kazhdan-Lusztig conjecture for $\glnm$ for arbitrary Borel and its parabolic variants \cite{Bru03, CLW15,BLW17}, relying on canonical bases of $\wedge\!^\la\VV\otimes \wedge\!^\mu\VV^{-}$ for partitions $\la$ and $\mu$ of $n$ and $m$. In another extreme case where the parabolic category is the full category $\Ofull$ associated with a standard sign sequence, the character formula is built on a canonical basis of $S^\la\VV\otimes S^\mu\VV^{-}$; we refer to Theorem \ref{thm:Char_fullO} and its proof where the works \cite{CCM23, CW25, Los12, SX20} are crucially used. The canonical basis character formula in Theorem \ref{thm:B} ultimately relies on Brundan-Kazhdan-Lusztig character formula for BGG category of $\glnm$-modules.

From the tensor product module formulation in Theorem \ref{thm:B}, it is tempting to relate it to tensor product categorification \cite{WebMem}, which works with standardly stratified categories. The category $\Ofull$ is rarely standardly stratified; however, it even admits a properly stratified subcategory $\Ofull^{\text{ps}}$ which contains Verma modules and the same irreducibles as $\Ofull$, via constructions of cokernel categories; cf. \cite{CCM23} (which treated the special case of a standard sign sequence but is valid for any $\bfep$) and the references therein. The uniqueness theorem from \cite{LW15} implies an equivalence of categories between $\Ofull^{\text{ps}}$ and Webster's diagrammatic tensor product category. For general parabolic categories $\Pcat$, it is an open problem on how to produce properly stratified categories with analogous favorable properties as the cokernel category approach does not seem to apply.

It will be interesting to formulate a Jantzen filtration on Verma modules for finite W-(super)algebras. It is natural to conjecture that Jantzen filtration in type A provides a $q$-analogue of our character formula in terms of the dual canonical basis in Theorem \ref{thm:A}.

In a future work, we will generalize this work to type $BCD$, where Bao-Wang's $\imath$canonical basis will be used instead of Lusztig's canonical basis in type $A$.

\subsection{Organization}

In Section \ref{sec:bases}, we construct standard bases, dual canonical bases, and related Bruhat orderings on various modules of the quantum group $\U$, including $q$-symmetric tensor, $q$-skew-symmetric tensor, polynomial modules, and their tensor product modules.
In Section \ref{sec:character}, we formulate the BGG-type categories $\Pcat$ for finite $W$-superalgebras of type $A$. We establish a character identity for the standard modules  and the character formula for the irreducible modules in $\Pcat$ using the dual canonical basis constructed in the previous section.

%\begin{enumerate}
%\item Koszulity, parabolic-singular duality ?
%\item In \cite{VD96}, the BRST 0th cohomology provides a functor from BGG category of $\g$-modules to ??category $\mc O$?? of $W$-modules. The KL conjecture for $W$ follows under the assumption of higher BRST cohomology vanishing.

%Is this BRST functor related or equal to the composition of Losev isomorphism with Backlin functor?
%\end{enumerate}

\vspace{2mm}
\noindent{\bf Acknowledgments.}
The first author is partially supported by an NSTC grant of the R.O.C. The second author is partially supported by the NSF grant DMS-2401351. He thanks Department of Mathematics and IMS, National University of Singapore,  and Institute for Mathematics, Academia Sinica in Taipei, and the NSTC for providing excellent research environment and for support during his sabbatical leave. We thank Chih-Whi Chen for helpful discussion on cokernel categories.

\section{Dual canonical and standard bases on tensor product modules}
\label{sec:bases}

In this section, we study the $q$-multilinear algebra (including $q$-symmetric tensors and $q$-anti-symmetric tensors) in the framework of representations of the quantum group of type $A$. We construct the standard bases and dual canonical bases in various tensor product modules. We develop the combinatorics of multi-tableaux and Bruhat orderings which provide parameterizations of the tensor product modules.

\subsection{Tableaux}
\label{ssec:tab}

Let $n\in \N$. Associated to a partition $\la$ of $n$, we have a left-justified pyramid (or upside down Young diagram) of shape $\la$. For example, the pyramid form of $\la=(3,2,2)$ is as follows:

    \begin{align*}
   \ydiagram{2,2,3}
    \end{align*}

A $\la$-tableau is a filling of the boxes of $\la$ with integers.
For $\la$ of length $\ell(\la)=\ell$, we shall always number the rows of a $\la$-tableau $A$ by $1, \ldots, \ell$ from top to bottom and the columns by $1,\ldots, \wp$ from left to right. The sequence $c(A) \in \Z^n$ is obtained from $A$ by column reading the entries down the columns starting from the leftmost column. Writing $c(A) =(a_1,\ldots,a_n)$, we can assign to $A$ a weight
\begin{align} \label{wtP}
\wt(A):= \sum_{i=1}^n \delta_{a_i}
 \in
\texttt{P},
\qquad \text{where }
\texttt{P} :=\bigoplus_{a\in \Z} \Z\delta_a.
\end{align}
On the other hand, we define a sequence $\rho(A)$ by row reading the entries of $A$ along the rows from left to right starting from the top row of $A$.

Denote by $\Row(\la)$  the set of $\la$-tableaux that are weakly increasing along each row from left to right. Denote by $\Col(\la)$ the set of column strict $\la$-tableaux, i.e., the tableaux that are strictly increasing along each column from bottom to top.
%An $A \in \Row(\la)$ is {\em dominant} if it has a representative belonging to $\Col(\la)$ and let $\Dom(\la)$ denote the set of all such dominant row symmetrized $\la$-tableaux.
A $\la$-tableau $A$ is {\em standard} if the entries of $A$ are strictly increasing up columns from bottom to
top and weakly increasing along rows from left to right. Denote by $\Std(\la)$ or $\Std(\la,+)$ the set of all standard $\la$-tableaux. Our convention here is largely consistent with \cite[\S2]{Bru06}.

We shall need reverse versions of various notions of tableaux defined above, which will be denoted by $\Row(\mu,-), \Col(\mu,-), \Std(\mu,-)$, for a partition $\mu$ of $n$. More explicitly, we denote by $\Row(\mu,-)$  the set of $\mu$-tableaux which are weakly {\em decreasing} along each row from left to right.
Denote by $\Col(\mu,-)$ the set of reverse column strict $\mu$-tableaux, i.e., the tableaux which are strictly {\em decreasing} up on each column from bottom to top.
%An $A \in \Row({\mu})$ is reverse dominant if it has a representative belonging to $\Col(\mu,-)$ and let $\Dom(\mu,-)$ denote the set of all such reverse dominant row symmetrized $\mu$-tableaux.
A $\mu$-tableau $B$ is {\em reverse standard} if the entries of $A$ are strictly {\em decreasing} up columns from bottom to
top and weakly {\em decreasing} along rows from left to right. Denote by $\Std(\mu,-)$ the set of all reverse standard $\mu$-tableaux.
%The rectification map $R: \Std(\la) \rightarrow \Dom(\la)$, which sends $A$ to its row equivalence class, is a bijection.
Writing $c(A) =(a_1,\ldots,a_n)$, we assign to $A \in\Std(\mu,-)$ the weight $-\wt(A)$; cf. \eqref{wtP} for notation $\wt(A)$.

Summarizing, for $A\in \Std(\la, \ep)$ with $\ep=\pm$, we have uniformly defined its weight to be $\wt_\ep(A) :=\ep \wt(A)$.

Let $\bfla =(\la^{(1)}, \la^{(2)},\ldots, \la^{(r)})$ be a multi-partition with each $\la^{(i)}$ being a partition, and let $\bfep=(\ep_1,\ep_2,\ldots,\ep_r)$ be an associated sign sequence with each $\ep_i\in \{\pm\}$. We shall call the pair $(\bfla,\bfep)$ (or simply denote by $\bflaep$ when a compact notation is preferred) a signed multi-partition. We associate to a signed multi-partition $\bflaep$ the following sets of signed multi-tableaux:
\begin{align}\label{Tab:bfla}
\begin{split}
\Row(\bflaep) &=\prod_{i=1}^r\Row(\la^{(i)},\ep_i),
\\
\Col(\bflaep) &=\prod_{i=1}^r\Col(\la^{(i)},\ep_i),
\\
\Std(\bflaep) &=\prod_{i=1}^r\Std(\la^{(i)},\ep_i).
\end{split}
\end{align}
For $n,m\in\N$, a sign sequence $\bfep$ is called {\em an $(n|m)$-sequence} if it contains $n$ pluses and $m$ minuses; it is {\em standard} if it is of the form $(+^n,-^m)$.

\begin{ex} \label{ex:tabeaux}
    Let $\la$ be the partition $(3,2,2)$.
Consider the following three $\la$-tableaux:
\begin{align*}
 \begin{ytableau}
%    4 & \none & \none \\
    2 & 2  \\
    3 & 6  \\
    0 & 0 & 1
\end{ytableau}\qquad\qquad
   \begin{ytableau}
    3 & 6  \\
    2 & 2  \\
    0 & 1 & 0
\end{ytableau}   \qquad\qquad
   \begin{ytableau}
    3 & 6  \\
    2 & 2  \\
    0 & 0 & 1
\end{ytableau}
\end{align*}
The first one lies in $\Row(\la,+)$, the second one in $\Col(\la,+)$, while the third one is standard. On the other hand the following three tableaux lie in $\Row(\la,-)$, $\Col(\la,-)$, and $\Std(\la,-)$, respectively:
\begin{align*}
 \begin{ytableau}
    2 & 2  \\
    1 & 0  \\
    3 & 3 & 6
\end{ytableau}\qquad\qquad
   \begin{ytableau}
    1 & 0  \\
    2 & 2  \\
    3 & 3 & 6
\end{ytableau}   \qquad\qquad
   \begin{ytableau}
    1 & 0  \\
    2 & 2  \\
    6 & 3 & 3
\end{ytableau}
\end{align*}
\end{ex}

\subsection{Bruhat orderings}
 \label{super:Bruhat}

Let $k\in \N$ and $\ep=\pm$. We identify $f\in\Z^k$ with the $k$-tuple of integers $\left(f(1), \ldots,f(k)\right)$. We call $f$ {\em $\ep$-dominant} if
\begin{align}
 \label{eq:Zk+}
\begin{cases}
f(1)> f(2)>\cdots> f(k), & \text{ if } \ep=+,
\\
f(1)< f(2)< \cdots < f(k), & \text{ if } \ep=-.
\end{cases}
\end{align}
Denote by $\Z^{k,\ep}_+$ the subset of $\ep$-dominant elements in $\Z^k$.
We call $f\in\Z^k$ {\em $\ep$-anti-dominant} if
\begin{align}
\label{eq:Zk-}
\begin{cases}
f(1)\leq f(2)\leq\cdots\leq f(k), & \text{ if } \ep=+,
\\
f(1)\geq f(2)\geq \cdots \geq f(k), & \text{ if } \ep=-.
\end{cases}
\end{align}
Denote by $\Z^{k,\ep}_-$ the subset of $\ep$-anti-dominant elements in $\Z^k$.
The natural bijections $\Z^k_+ \leftrightarrow\Col((1^k),\ep)$ and $\Z^k_- \leftrightarrow\Row((k),\ep)$ will be generalized later.

Let $\bfs=(s_1,\ldots, s_{n+m})$ be an $(n|m)$-sign sequence.
We introduce a partial order on $\Z^{n+m}$, $\preceq_\bfs$, that is compatible with the Bruhat orderings for the Lie superalgebra $\glnm$; cf. \cite{CLW15}.

Define the following subset of $P$ from \eqref{wtP}:
\begin{align}  \label{P+}
\texttt{P}^+ =\bigoplus_{a\in \Z} \N (\delta_a -\delta_{a+1}).
\end{align}
We define a partial order on $\texttt{P}$ by
declaring $\nu\ge\mu$, for $\nu,\mu\in \texttt{P}$, if $\nu-\mu \in \texttt{P}^+$.

For $f\in\Z^{n+m}$ and $1\le j\le n+m$, we define
\begin{align*}
\wt^j_{\bfs}(f)
 :=\sum_{i=j}^{n+m} {s_i}\delta_{f(i)}\in \texttt{P},\quad
\wt_{\bfs}(f):=\wt^{1}_{\bfs}(f)\in \texttt{P}.
\end{align*}

We define the {\em Bruhat ordering} on $\Z^{n+m}$,
denoted by $\preceq_{\bfs}$, in terms of the partially ordered set
$(\texttt{P}, \leq)$ as follows: $g\preceq_{\bfs}f$ if and only if
$\wt_{\bfs}(g)=\wt_{\bfs}(f)$ and $\wt^j_{\bfs}(g)\le\wt^j_{\bfs}(f)$, for all $j$. Note that when $m=0$ and $\bfs=(+^n)$
this is simply the usual Bruhat ordering on the weight lattice
$\Z^n$ of $\gl_n$.

%Introduce the following sequence of integers, for $f\in\Z^{n+m}, a\in \Z$ and $1\le j \le n+m$:
%
%\begin{align}\label{aux:sharp:Bruhat}
%\sharp_{\bfs}(f,a,j):=\sum_{j\le i\leq n+m,f(i)\le a} {s_i}  1.
%\end{align}
%Then the following characterization of $\preceq_\bfs$ holds:
%
%\begin{align*}
%&g\preceq_{\bfs}f, \; \text{ for } g,f\in\Z^{n+m} \; \Leftrightarrow\;
%\\
%&\sharp_{\bfs}(g,a,j)\le \sharp_{\bfs}(f,a,j),\; \forall a\in\Z,1\le j\le n+m, \text{with equality for $j=1$.}
%\end{align*}
%
%For $1\le i\le n+m$ we let $d_i\in\Z^{n+m}$ be determined by
%
%\begin{equation}  \label{eq:di}
%d_i(j)= {s_i}\delta_{ij}, \quad \text{ for } j\in [n+m].
%\end{equation}
%Let $f,g\in\Z^{n+m}$. We write $f\downarrow_{\bfs} g$ if one of the following holds:
%\begin{align*}
%\begin{cases}
%g=f\cdot (i,j),&\text{ for }s_i=s_j=+,i<j, f(i)>f(j),\\
%g=f\cdot (i,j),&\text{ for }s_i=s_j=-,i<j, f(i)<f(j),\\
%g=f-d_i+d_j,&\text{ for }s_i\not=s_j, i<j, f(i)=f(j).
%\end{cases}
%\end{align*}
%Here $f\cdot (i,j)$ denotes the right action by a transposition.

\subsection{Tensor $\U$-modules $\bbT^{\bfs}$}

Let $q$ be an indeterminate. The quantum group $\U =\U_q({\mathfrak
g\mathfrak l}_\infty)$ is defined to be the associative algebra over
$\Q(q)$ generated by $E_a, F_a, K_a, K^{-1}_a, a \in \Z$, subject to
the following relations ($a,b\in\Z$):
\begin{eqnarray*}
 K_a K_a^{-1} &=& K_a^{-1} K_a =1, \quad
 K_a K_b = K_b K_a, \\
 K_a E_b K_a^{-1} &=& q^{\delta_{a,b} -\delta_{a,b+1}} E_b, \\
 K_a F_b K_a^{-1} &=& q^{\delta_{a,b+1}-\delta_{a,b}}
 F_b, \\
 E_a F_b -F_b E_a &=& \delta_{a,b} \frac{K_{a,a+1}
 -K_{a+1,a}}{q-q^{-1}}, \\
 E_a^2 E_b +E_b E_a^2 &=& (q+q^{-1}) E_a E_b E_a,  \quad \text{if } |a-b|=1, \\
 E_a E_b &=& E_b E_a,  \,\qquad\qquad\qquad \text{if } |a-b|>1, \\
 F_a^2 F_b +F_b F_a^2 &=& (q+q^{-1}) F_a F_b F_a,  \quad\, \text{if } |a-b|=1,\\
 F_a F_b &=& F_b F_a,  \qquad\ \qquad\qquad \text{if } |a-b|>1.
\end{eqnarray*}
Here $K_{a,a+1} :=K_aK_{a+1}^{-1}$. For $r\ge 1$, we introduce the
divided powers $E_a^{(r)} =E_a^{r}/[r]!$ and
$F_a^{(r)}=F_a^{r}/[r]!$, where $[r] =(q^r-q^{-r})/(q-q^{-1})$.

Setting $\ov{q}=q^{-1}$ induces an automorphism on $\Q(q)$ denoted
by $^{-}$. Define the bar involution on $U_q({\mathfrak g\mathfrak
l}_\infty)$ to be the anti-linear automorphism ${}^-: U_q({\mathfrak
g\mathfrak l}_\infty)\rightarrow U_q({\mathfrak g\mathfrak
l}_\infty)$ determined by $\ov{E_a}= E_a$, $\ov{F_a}=F_a$, and
$\ov{K_a}=K_a^{-1}$. Here {\em anti-linear} means that
$\ov{fu}=\ov{f}\ov{u}$, for $f\in\Q(q)$ and $u\in U_q(\gl_\infty)$.
The quantum group $U_q(\mf{sl}_\infty)$ is the subalgebra generated
by $\{E_a,F_a,K_{a,a+1}\vert a\in\Z\}$.

Set $\cZ=\Z[q,q^{-1}]$ and let $\U\!_\cZ=\U (\sln_\infty)\!_\cZ$ denote the Lusztig integral form quantum group, i.e., the $\cZ$-subalgebra generated by the divided powers $E_i^{(r)}, F_i^{(r)}$, for $i\in \Z$ and $r\ge 0$.

Let $\VV=\VV^+$ be the natural $U_q({\mathfrak g\mathfrak
l}_\infty)$-module with basis $\{v_a\}_{a\in\Z}$ and let $\VV^-$ be the restricted dual module with basis
$\{v^-_a\}_{a\in\Z}$ such that $$\langle v^-_a ,v_b\rangle = (-q)^{-a}
\delta_{a,b}.$$  The actions of $U_q(\gl_\infty)$ on $\VV$ and $\VV^-$ are given by the following formulas:
\begin{align*}
K_av_b=q^{\delta_{ab}}v_b,\qquad E_av_b&=\delta_{a+1,b}v_a,\ \ \quad
F_av_b=\delta_{a,b}v_{a+1},\\
K_av^-_b=q^{-\delta_{ab}}v^-_b,\quad E_av^-_b&=\delta_{a,b}v^-_{a+1},\quad
F_av^-_b=\delta_{a+1,b}v^-_{a}.
\end{align*}
We also denote $v^+_a:=v_a$ so we can write $v_a^\ep$, for a sign $\ep$. Then $\VV^\ep_\cZ:=\sum_{a\in\Z}\cZ v_a^\ep$ is a $\U_{\cZ}$-submodule of $\VV^\ep$.

We shall use the co-multiplication $\Delta$ on $U_q(\gl_\infty)$
defined by
\begin{align}  \label{eq:Delta}
\begin{split}
 \Delta (E_a) &= 1 \otimes E_a + E_a \otimes K_{a+1, a}, \\
 \Delta (F_a) &= F_a \otimes 1 +  K_{a, a+1} \otimes F_a,\quad
 \Delta (K_a) = K_a \otimes K_{ a},
 \end{split}
\end{align}
which restricts to a comultiplication on $U_q(\mf{sl}_\infty)$. Our
$\Delta$ here
differs from the one used in \cite{Lus10}.

Let $n,m\in\N$. Given an $(n|m)$-sign sequence $\bfs =(s_1, \ldots, s_{n+m})$, we define the following
tensor space over $\bbQ(q)$:
\begin{equation*}
{\bbT}^\bfs = {\bbT}^\bfs(\VV):= {\mathbb V^{s_1}} \otimes {\mathbb V^{s_2}} \otimes \ldots \otimes {\mathbb V^{s_{n+m}}}.
\end{equation*}
For example, associated to the standard sign sequence $\bfs\bft =(+^n,-^m)$, we have ${\bbT}^{\bfs \bft} =\VV^{\otimes n}\otimes {\VV^-}^{\otimes m}$; in case $m=0$ or $n=0$ we will simply write as ${\bbT}^{n}(\VV)$ and ${\bbT}^{m}(\VV^-)$.
The quantum group $U_q(\gl_\infty)$ acts on $\bbT^\bfs$ via iterated co-multiplication.

For $f\in \Z^{n+m}$, we define
\begin{equation}  \label{eq:Mf}
M_f =M_f^\bfs := v_{f(1)}^{s_1}\otimes v_{f(2)}^{s_2}\otimes\cdots \otimes v_{f(n+m)}^{s_{n+m}}.
\end{equation}
We refer to $\{M_f \mid f\in
\Z^{n+m}\}$ as the {\em monomial basis} for ${\mathbb
T}^\bfs$. Denote by ${\bbT}^\bfs_\cZ$ the $\Z[q,q^{-1}]$-span of $\{M_f \mid f\in
\Z^{n+m}\}$.

Denote by $\widehat{\bbT}^\bfs$ the (downward) completion of $\bbT^\bfs$ with respect to the monomial basis in Bruhat order $\preceq$, i.e., its elements are finite linear combinations of possibly infinite sums of the form $M_f +\sum_{f'\prec f} c_{f,f'}(q) M_{f'}$, for $c_{f,f'} \in \Q(q)$. We define $\widehat{\bbT}^\bfs_\cZ$ similarly. We will encounter several similar (downward) completion below.

There is a bar involution $\overline{\phantom{x}}:\widehat{\bbT}^\bfs\rightarrow \widehat{\bbT}^\bfs$ defined via quasi-R matrix (cf.~\cite{Lus92, Bru03, CLW15}) such that
$\overline{M}_f\in M_f+\sum_{g\prec_{\bfs} f}\Z[q,q^{-1}] M_g.$
It follows from \cite[Lemma 24.2.1]{Lus10} that there exists dual canonical basis $\{\pounds_f \mid f\in\Z^{n+m}\}$ in $\widehat{\bbT}^\bfs$, which is characterized by $\ov{\pounds}_f=\pounds_f$ and the property
\begin{align}\label{dual:can:T}
    \pounds_f\in M_f+ \sum_{g\prec_{\bfs} f}q^{-1}\Z[q^{-1}] M_g.
\end{align}

\subsection{$q$-symmetric tensors}\label{ssec:qsymm}

The symmetric group $\mf{S}_{k}$ on $\{{1},{2},\ldots,{k}\}$ is generated
by the simple transpositions $s_{i} =(i,i+1)$, for $1\le i\le k-1$.
The Hecke algebra $\mathcal H_{k}$ is the associative $\mathbb Q(q)$-algebra
 generated by $H_i$, $1 \le i \le k-1$, subject to
the relations $(H_i -q^{-1})(H_i +q) = 0$ and the usual braid relations.
%\begin{align*}
%&(H_i -q^{-1})(H_i +q) = 0,\\
%&H_i H_{i+1} H_i = H_{i+1} H_i H_{i+1},\\
%&H_i H_j = H_j H_i, \quad\text{for } |i-j| >1.
%\end{align*}
Associated to $\sigma \in \mf{S}_{k}$ with a reduced
expression $\sigma=s_{i_1} \cdots s_{i_t}$, we define the element
 $
H_\sigma :=H_{i_1} \cdots H_{i_t}. $ The bar involution $\overline{\phantom{x}}$ on $\mathcal
H_{k}$ is the unique anti-linear automorphism such that
$\overline{q} =q^{-1}$ and $\overline{H_\sigma} =H_{\sigma^{-1}}^{-1}$,
for all $\sigma \in \mf{S}_{k}$.

Consider the $k$-th tensor space $\bbT^k(\VV^\ep)$ of $\VV^\ep$. Recall from \eqref{eq:Mf} that
$M_f = v^\ep_{f(1)} \otimes \cdots \otimes
v^\ep_{f(k)}$, for $f\in\Z^k$. The algebra $\mathcal H_{k}$ acts on $\bbT^k(\VV^\ep)$ on the right by
\begin{eqnarray} \label{eq:heckeaction}
 M_f H_i = \left\{
 \begin{array}{ll}
 M_{f\cdot s_i}, & \text{if } f \prec f \cdot s_i,  \\
 q^{-1} M_{f}, & \text{if } f = f \cdot s_i, \\
 M_{f \cdot s_i} - (q-q^{-1}) M_f, & \text{if } f \succ f \cdot
 s_i.
 \end{array}
 \right.
\end{eqnarray}
Here $f\cdot s_i\prec f$ in  case $\ep=+$ while $f\prec f\cdot s_i$ in case $\ep=-$, if $f(i)>f(i+1)$.
By the Jimbo-Schur duality (\cite{Jim86}), the actions of $U_q(\gl_\infty)$ and $\mathcal H_{k}$ on the tensor space $\bbT^k(\VV^\ep)$ commute with each other. Set
\begin{align}
\begin{split}
\texttt{Sym}_k &:= \sum_{\sigma \in \mf{S}_{k}} q^{\ell(w_k)-\ell(\sigma)} H_\sigma,
 \qquad
 \texttt{Ant}_k := \sum_{\sigma \in \mf{S}_{k}}
(-q)^{\ell(\sigma)-\ell(w_k)} H_\sigma,
\end{split}
\label{formula:Y0}
\end{align}
where $w_k$ denotes the longest element in $\mf{S}_k$. It is
well known that $\texttt{Sym}_k$ and $\texttt{Ant}_k$ are bar invariant, and moreover,
\begin{align}\label{eq:S0H}
\begin{split}
\texttt{Sym}_k H_\sigma &= q^{-\ell(\sigma)} \texttt{Sym}_k = H_\sigma \texttt{Sym}_k,
  \\
\texttt{Ant}_k H_\sigma &= (- q)^{\ell(\sigma)} H_\sigma = H_\sigma \texttt{Ant}_k,
\end{split}\qquad \text{ for }\sigma \in \mf S_k.
\end{align}

Right multiplication by $\texttt{Sym}_k$ defines a $\U$-module homomorphism $\texttt{Sym}_k:\bbT^k(\VV^\ep)\rightarrow\bbT^k(\VV^\ep)$.
We define the $k$-th $q$-symmetric tensor of $\VV^\ep$ to be
\begin{align*}
    S^k(\VV^\ep):= \bbT^k(\VV^\ep)/\ker\texttt{Sym}_k.
\end{align*}
Indeed, $\ker\texttt{Sym}_k$ is the sum of the kernel of the $\U$-homomorphisms $H_i+q$, $1\le i\le k-1$. Since $\texttt{Sym}_k$ is bar invariant, it follows that the $\U$-action and the bar involution on $\bbT^k(\VV^\ep)$ descend to a bar involution on the $\U$-module $S^k(\VV^\ep)$. The monomial basis in $S^k(\VV^\ep)$ is by definition the images $\{\pi(M_f)|f\in\Z^{k,\ep}_-\}$ under the canonical map $\pi:\bbT^k(\VV^\ep)\rightarrow S^k(\VV^\ep)$; cf. \eqref{eq:Zk-} for $\Z^{k,\ep}_-$.

Thanks to the commuting actions of $\U$ and $\mc H_k$ on $\bbT^k(\VV^\ep)$, we see that
$\bbT^k(\VV^\ep)\texttt{Sym}_k$ is a $\U$-module. Since $\bbT^k(\VV^\ep)=\ker \texttt{Sym}_k\oplus\bbT^k(\VV^\ep)\texttt{Sym}_k$ as $\U$-modules, the projection $\pi: \bbT^k(\VV^\ep)\rightarrow S^k(\VV^\ep)$ leads to a $\U$-module isomorphism $\bbT^k(\VV^\ep)\texttt{Sym}_k \cong S^k(\VV^\ep).$
%
%Let $f\in \Z^{k,\ep}_-$. Denote its stabilizer by $W_f:=\{\sigma\in\mf{S}_k|f\cdot\sigma=f\}$ and denote by $w_k^f$ the longest element in $W_f$. We have $W_f=\prod_{i}\mf S_{m_i}$ and we set $[|W_f|]:=\prod_{i}[m_i]!$. Furthermore, let $W^f$ denote the set of shortest length representatives of the cosets $W_f\backslash\mf{S}_k$ with longest element $^f\!w_k$. We compute
%\begin{align*}
%M_f \texttt{Sym}_k&=M_f \sum_{\sigma\in W_f}q^{\ell(w_k^f)-\ell(\sigma)}H_\sigma\sum_{\tau\in W^f}q^{\ell(^f\!w_k)-\ell(\tau)}H_\tau\\
%&=M_f \sum_{\sigma\in W_f}q^{\ell(w_k^f)-2\ell(\sigma)}\sum_{\tau\in W^f}q^{\ell(^f\!w_k)-\ell(\tau)}H_\tau\\
%&=[|W_f|]M_f \sum_{\tau\in W^f}q^{\ell(^f\!w_k)-\ell(\tau)}H_\tau\\
%&=[|W_f|]\sum_{\tau\in W^f}q^{\ell(^f\!w_k)-\ell(\tau)}M_{f\cdot \tau}.
%\end{align*}
%In the penultimate equality above, we have used the formula
%\begin{align*}
%\sum_{\sigma\in\mf S_k}q^{\ell(w_k)-2\ell(\sigma)}=[k]!.
%\end{align*}
%Thus,
%\begin{align*}
%\widetilde{M}_f:=\frac{1}{[|W_f|]} M_f \texttt{Sym}_k=\sum_{\tau\in W^f}q^{\ell(^f\!w_k)-\ell(\tau)}M_{f\cdot\tau}
%\end{align*}
%lies in the $Z[q,q^{-1}]$-span of the $M_f$. Then $\{\widetilde{M}_f|f\in\Z^{k,\ep}_-\}$ is a basis for $\widetilde{S}^k(\VV^\ep)$.
%
Let $S^k(\VV^\ep\!\!\!_\cZ)$ be the $\Z[q,q^{-1}]$-span of the monomial basis in $S^k(\VV^\ep)$. Furthermore, let  $\bbT^{k}(\VV^\ep\!\!\!_\cZ)\texttt{Sym}_k$ be $\Z[q,q^{-1}]$-span of $M_{f}\texttt{Sym}_k$, for $f\in\Z^{k,\ep}$. Then, the $\cZ$-linear map sending $\pi(M_f)\mapsto M_{f}\texttt{Sym}_k$, for $f\in\Z^{k,\ep}_-$, gives an isomorphism of $\U\!_{\cZ}$-modules $S^k(\VV^\ep\!\!\!_\cZ)\cong \bbT^{k}(\VV^\ep\!\!\!_\cZ)\texttt{Sym}_k$ with compatible bar involutions. The isomorphism matches the monomial basis $\{\pi(M_f)|f\in\Z^{k,\ep}_-\}$ of $S^k(\VV)$ with the monomial basis of $\{M_f\texttt{Sym}_k|f\in\Z^{k,\ep}_-\}$ of $\bbT\!_{\cZ}^{k}(\VV^\ep)\texttt{Sym}_k$.

For a composition $\la=(\la_1,\dots, \la_\ell)$ and a sign $\ep$, we let
\begin{align}
   \label{eq:Sla}
S^{\la} (\VV^\ep) =S^{\la_\ell} (\VV^\ep)   \otimes \cdots \otimes S^{\la_1} (\VV^\ep),
\quad
S^{\la} (\VV^\ep\!\!\!_\cZ) =S^{\la_\ell} (\VV^\ep\!\!\!_\cZ)   \otimes \cdots \otimes S^{\la_1} (\VV^\ep\!\!\!_\cZ).
\end{align}
We have a canonical map $\bbT^{|\la|} (\VV^\ep) \rightarrow S^{\la} (\VV^\ep)$.

For a signed multi-partition $\bflaep$ with $\bfla =(\la^{(1)},\ldots, \la^{(r)})$ and $\bfep=(\ep_1,\ldots,\ep_r)$, we define
compositions
\begin{align}
    \label{def:la_plusminus}
\la =\bigcup_{i=1}^r \la^{(i)},
\qquad
\la\!^+ =\bigcup_{i:\ep_i=+} \la^{(i)},
\qquad
\la\!^- =\bigcup_{i:\ep_i=-} \la^{(i)},
\end{align}
with parts of $\la^{(i)}$ preceeding parts of $\la^{(j)}$ whenever $i<j$. Writing $\la=(\la_1,\la_2,\ldots)$, we then define a new signed multi-partition $(\ula,\uep)$, where
\begin{align}
\label{def:ula_uep}
    \ula =( (\la_1),(\la_2),\ldots),
    \qquad
    \uep =(\ep_1^{\ell(\la^{(1)})},\ldots,\ep_r^{\ell(\la^{(r)})}).
\end{align}
We define
\begin{align}
   \label{eq:Sbfla}
S^{\bflaep} (\VV) = S^{\la^{(1)}} (\VV^{\ep_1}) \otimes \ldots \otimes S^{\la^{(r)}} (\VV^{\ep_r}),
\quad
S^{\bflaep} (\VV\!_\cZ) = S^{\la^{(1)}} (\VV^{\ep_1}\!\!\!\!\!_\cZ) \otimes \ldots \otimes S^{\la^{(r)}} (\VV^{\ep_r}\!\!\!\!\!_\cZ).
\end{align}
We now assume that $\bflaep$ is of $(n|m)$-type in the sense that $|\la\!^+|=n$ and $|\la\!^-|=m$.
Associated with $\bflaep$, we can form an $(n|m)$-sign sequence
\begin{align}  \label{eq:s_ep}
\bfs_{\bflaep}=(\ep_1^{|\la^{(1)}|}, \ldots, \ep_r^{|\la^{(r)}|}).
\end{align}

\begin{ex}
\label{ex:signedmulti-partition}
Consider a signed multi-partition $\bflaep$ with $\bfla =(\la^{(1)},\la^{(2)},\la^{(3)},\la^{(4)})$ and $\bfep=(+-+-)$, where
$\la^{(1)}= (3,3,1), \la^{(2)}=(4,2),\la^{(3)}=(2),$ and $\la^{(4)}=(3,1)$. Then $\ula=\big((3),(3),(1), (4),(2), (2), (3),(1)\big)$, $\uep=(+++--+--)$, and $\bfs_{\bflaep}=(+^7-^6+^2-^4)$. Compare Example \ref{ex:pyramid}(2).
\end{ex}

Given a partition $\la=(\la_1,\ldots, \la_\ell)$, we let $\texttt{Sym}_{\la} =\texttt{Sym}_{\la_\ell}\ldots\texttt{Sym}_{\la_1}$  denote the $q$-symmetrizer in $\mc H_{\la}:  =\mc H_{\la_\ell}\otimes \cdots \otimes \mc H_{\la_1}$ (a subalgebra of $\mc H_{|\la|})$.
For a multi-partition $\bfla =(\la^{(1)}, \ldots, \la^{(r)})$, we let $\texttt{Sym}_{\bfla} =\texttt{Sym}_{\la^{(1)}}\ldots\texttt{Sym}_{\la^{(r)}}$ denote the $q$-symmetrizer in $\mc H_{\bfla}:  =\mc H_{\la^{(1)}}\otimes \cdots \otimes \mc H_{\la^{(r)}}$ (a natural subalgebra of $\mc H_{n+m}$).

Recall the row reading $\rho(A)$ of a tableau $A$ in Section \ref{ssec:tab}. Let $\bflaep$ be of $(n|m)$-type, where $\bfla =(\la^{(1)}, \ldots, \la^{(r)})$ and $\bfep=(\ep_1,\ldots,\ep_r)$. Given $\bfA=(A^{(1)},\ldots,A^{(r)})\in \Row(\bflaep)$, the row reading $\rho(\bfA) \in \Z^{n+m}$ is understood to be the row reading $\rho(A^{(1)})$, followed by $\rho(A^{(2)})$, and so on.
We define the following subset of $\Z^{n+m}$:
\[
\Z^{\bflaep}_-:=\{\rho(\bfA) \in\Z^{n+m} \mid \bfA \in\Row(\bflaep)\},
\]
and so we have a bijection
\[
\Row(\bflaep) \stackrel{1{-}1}{\longrightarrow} \Z^{\bflaep}_-, \qquad
\bfA \mapsto \rho(\bfA),
\]
with the inverse map denoted by $f \mapsto \bfA_f.$ We shall refer to $f \in \Z^{\bflaep}_-$ as $\bflaep$-anti-dominant. This generalizes $\Z^{k,\ep}_-$ from \eqref{eq:Zk-}.

Extending the canonical map $\bbT^{|\la|} (\VV^\ep) \rightarrow S^{\la} (\VV^\ep)$ for a partition $\la$, we have a canonical map
\begin{align}  \label{eq:piS}
\pi: \bbT^{\bfs_{\bflaep}} \longrightarrow S^{\bflaep} (\VV).
\end{align}
Recall from \eqref{eq:Mf} the monomial basis $\{M_f\mid f\in \Z^{n+m}\}$ for $\bbT^{\bfs_{\bflaep}}$. Denote
\begin{align} \label{def:NA}
\Pi_\bfA =\pi(M_{\rho(\bfA)}), \quad \text{ for }\bfA\in \Row(\bflaep).
\end{align}
The following lemma follows immediately from the case when $\bfla$ is a partition.
\begin{lem}
\label{lem:stdbasis}
    We have a $\U$-module isomorphism $S^{\bflaep} (\VV) \cong \bbT^{\bfs_{\bflaep}} \texttt{Sym}_{\bfla}$ and a $\U\!_\cZ$-module isomorphism $S^{\bflaep}_\cZ (\VV)\! \cong \bbT^{\bfs_{\bflaep}}_\cZ\texttt{Sym}_{\bfla}$. Moreover, the isomophism matches the monomial bases $\{\Pi_\bfA\mid \bfA\in\Row(\bflaep)\}$ in $S^{\bflaep}_\cZ (\VV)$ and $\{M_f\texttt{Sym}_{\bfla}|f\in\Z^{\bflaep}_-\}$ in $\bbT^{\bfs_{\bflaep}}_\cZ\texttt{Sym}_{\bfla}$.
\end{lem}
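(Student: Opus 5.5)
The plan is to reduce the statement to the single-factor case $S^k(\VV^\ep)$ treated above, and then assemble the factors by tensoring. The sign sequence $\bfs_{\bflaep}$ is a concatenation of blocks. The $i$-th block is $\ep_i^{|\la^{(i)}|}$, and it is further cut into consecutive sub-blocks of sizes $\la^{(i)}_1,\dots,\la^{(i)}_{\ell}$, where $\ell=\ell(\la^{(i)})$, in the order dictated by the row reading. Matching these sub-blocks with the tensor factors of $S^{\la^{(i)}}(\VV^{\ep_i})$ in \eqref{eq:Sla}, we get
\[
\bbT^{\bfs_{\bflaep}} = \bigotimes_{i=1}^r \bigotimes_{j} \bbT^{\la^{(i)}_j}(\VV^{\ep_i}),
\]
as $\U$-modules, and similarly over $\cZ$. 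The Hecke algebra $\mc H_{\bfla}$ is the tensor product of the corresponding $\mc H_{\la^{(i)}_j}$. Each factor acts on the right on its own tensor factor, and $\texttt{Sym}_{\bfla}$ is the product of the commuting elements $\texttt{Sym}_{\la^{(i)}_j}$.

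First I will check that the right $\mc H_{\bfla}$-action commutes with the $\U$-action on the whole tensor product. This holds because the action of $\mc H_{\la^{(i)}_j}$ on $\bbT^{\bfs_{\bflaep}}$ is the Jimbo--Schur action on a block of consecutive factors of equal sign, tensored with the identity elsewhere. The iterated comultiplication \eqref{eq:Delta} restricted to that block is the $\U$-action on $\bbT^{\la^{(i)}_j}(\VV^{\ep_i})$, and the Jimbo--Schur duality (\cite{Jim86}) applies there. It follows that
\[
\bbT^{\bfs_{\bflaep}}\texttt{Sym}_{\bfla}=\bigotimes_{i,j}\bbT^{\la^{(i)}_j}(\VV^{\ep_i})\texttt{Sym}_{\la^{(i)}_j}
\]
is a $\U$-submodule. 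The tensor product of the single-factor isomorphisms $S^k(\VV^\ep)\cong \bbT^k(\VV^\ep)\texttt{Sym}_k$, $\pi(M_f)\mapsto M_f\texttt{Sym}_k$, then gives the claimed $\U$-isomorphism. Tensoring the single-factor $\U\!_\cZ$-isomorphisms gives the integral version, because the $\cZ$-forms on both sides are tensor products of the $\cZ$-forms of the factors.

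The matching of bases is then bookkeeping. For $\bfA\in\Row(\bflaep)$, the row reading $\rho(\bfA)$ restricted to the sub-block belonging to row $j$ of $A^{(i)}$ is an element of $\Z^{\la^{(i)}_j,\ep_i}_-$, since rows are weakly increasing for $\ep_i=+$ and weakly decreasing for $\ep_i=-$. Conversely, any such concatenation is $\rho$ of a unique $\bfA$. Moreover, $M_{\rho(\bfA)}$ is the tensor product of the monomials $M_{f_{ij}}$ over the sub-blocks, with the $f_{ij}$ the corresponding pieces of $\rho(\bfA)$. So $\Pi_\bfA=\pi(M_{\rho(\bfA)})$ is the tensor product of the single-factor monomial basis vectors, and it is sent to $\bigotimes M_{f_{ij}}\texttt{Sym}_{\la^{(i)}_j}=M_{\rho(\bfA)}\texttt{Sym}_{\bfla}$.

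I expect the main obstacle to be the ordering convention. In \eqref{eq:Sla} the factors $S^{\la_\ell}\otimes\cdots\otimes S^{\la_1}$ appear in reversed order, while $\rho(A)$ reads rows from the top down. In the pyramid (upside-down Young diagram) convention, the top row is the shortest row $\la_\ell$. So the row reading visits rows in the order $\la_\ell,\dots,\la_1$, and the sub-blocks match the tensor factors of \eqref{eq:Sla} exactly. I will verify this against Example \ref{ex:tabeaux} and then note that the single-factor case does all the remaining work.
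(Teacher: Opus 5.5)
Your proposal is correct and takes essentially the same route as the paper, whose entire proof is the remark that the lemma follows immediately from the partition case. That case is itself the tensor product of the single-row isomorphisms $S^k(\VV^\ep)\cong\bbT^k(\VV^\ep)\texttt{Sym}_k$, $\pi(M_f)\mapsto M_f\texttt{Sym}_k$, and you have simply written out the tensoring and the row-reading bookkeeping, including the one point that needs care: the top-down row reading of the pyramid visits rows in the order $\la_\ell,\dots,\la_1$, matching the factor order in \eqref{eq:Sla}.
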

Denote by $\widehat S^{\bflaep}(\VV)$ the completion of $S^{\bflaep}(\VV)$ with respect to the monomial basis in the partial order $\prec_{\bfs_{\bflaep}}$. Recall the dual canonical basis $\{\pounds_f\mid f\in \Z^{n+m}\}$ in the completion $\widehat{\bbT}^\bfs$ as in \eqref{dual:can:T}. By the isomorphism in Lemma \ref{lem:stdbasis}, we can view $S^{\bflaep}(\VV)$ (respectively, a suitable completion $\widehat S^{\bflaep}(\VV)$ with respect to the monomial basis in the Bruhat order) as a subspace of $\bbT^{\bfs_{\bflaep}}$ (respectively, $\widehat{\bbT}^{\bfs_{\bflaep}}$).

\begin{thm} \label{thm:DCB:Slambda}
    There exists a dual canonical basis $\{L_\bfA\mid \bfA\in \Row(\bflaep) \}$ on $\widehat S^{\bflaep}(\VV)$, which is characterized by $\overline{L_\bfA}=L_\bfA$ and
    \[
    L_\bfA \in \Pi_\bfA +\sum_{\bfA'\prec_{\bfs_{\bflaep}} \bfA} q^{-1}\Z[q^{-1}] \Pi_{\bfA'}.
    \]
    (Infinite sums are allowed here.) Moreover, we have the identification $L_\bfA=\pounds_{\rho(\bfA)}\texttt{Sym}_{\bfla}$.
\end{thm}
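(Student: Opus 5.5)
The plan is to work inside $\widehat{\bbT}^{\bfs}$, with $\bfs=\bfs_{\bflaep}$, using Lemma \ref{lem:stdbasis}. There the image $\widehat S^{\bflaep}(\VV)\cong \widehat{\bbT}^{\bfs}\texttt{Sym}_{\bfla}$ is a bar-stable subspace, because right multiplication by the bar-invariant element $\texttt{Sym}_{\bfla}$ commutes with the bar involution on $\bbT^{\bfs}$. This commutation is the compatibility of the quasi-$R$-matrix bar map with the Hecke action. It holds factor by factor: on each $\bbT^{|\la^{(i)}|}(\VV^{\ep_i})$ it is the standard Jimbo--Schur fact. For the full tensor product it follows because the quasi-$R$-matrix acts through $\U$, which commutes with $\mc H_{\bfla}$. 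For each $\bfA\in\Row(\bflaep)$ I will take the candidate $L_\bfA:=\pounds_{\rho(\bfA)}\texttt{Sym}_{\bfla}$. Bar invariance is then immediate from $\ov{\pounds}_f=\pounds_f$ and $\ov{\texttt{Sym}_{\bfla}}=\texttt{Sym}_{\bfla}$.

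The main step is the triangularity. By \eqref{dual:can:T}, $L_\bfA\in M_f\texttt{Sym}_{\bfla}+\sum_{g\prec_\bfs f}q^{-1}\Z[q^{-1}]M_g\texttt{Sym}_{\bfla}$ with $f=\rho(\bfA)$. For each $g$ I rewrite $M_g\texttt{Sym}_{\bfla}$ as $q^{-\ell(\sigma)}M_{g'}\texttt{Sym}_{\bfla}$, where $g'\in\Z^{\bflaep}_-$ is the anti-dominant representative of $g$ under the parabolic subgroup $\mf S_{\bfla}$ (row-wise permutations) and $\sigma$ is the minimal-length coset element. This uses \eqref{eq:heckeaction} and \eqref{eq:S0H}.

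Two things must then be checked. First, $g'\preceq_\bfs g$ for the reordered sequence, so $g'\prec_\bfs f$. This holds because permuting within a block of constant sign toward $\ep$-anti-dominant order only lowers the partial weights $\wt^j_\bfs$. Second, the power of $q$ that appears is $q^{-\ell(\sigma)}$ with $\ell(\sigma)\ge 0$, so coefficients stay in $q^{-1}\Z[q^{-1}]$. The direction conventions in \eqref{eq:heckeaction} for $\ep=\pm$ are arranged precisely so that moving to $\Z^{\bflaep}_-$ produces $q^{-1}$-factors, possibly zero powers when entries coincide. I also need $\rho(\bfA)$ itself to be already anti-dominant, which holds by definition. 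This gives
\[
L_\bfA\in \Pi_\bfA+\sum_{\bfA'\prec_\bfs\bfA}q^{-1}\Z[q^{-1}]\Pi_{\bfA'}.
\]
A subtlety needs care here. Several $g$ may collapse to the same $g'$, and the leading-term normalization requires that no $g\prec f$ collapses to $f$ itself. This holds because $f$ is the unique anti-dominant element of its orbit and $g'\preceq g\prec f$. Convergence in the completion also needs checking: for fixed $\bfA'$, only $g$ in the finite $\mf S_\bfla$-orbit of $\rho(\bfA')$ contribute, so the coefficients are finite sums.

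Uniqueness is the standard argument. If two bar-invariant elements satisfy the same triangularity, their difference $X$ is bar invariant and lies in $\sum q^{-1}\Z[q^{-1}]\Pi_{\bfA'}$. Now $\ov{\Pi}_\bfA\in\Pi_\bfA+\sum_{\bfA'\prec\bfA}\Z[q,q^{-1}]\Pi_{\bfA'}$, which is obtained by the same reduction applied to $\ov{M}_f$. Looking at a maximal term of $X$ and comparing it with its bar image forces that coefficient to be zero. Since the Bruhat intervals below a fixed element with fixed weight are finite (or at least downward well-founded within the completion), induction gives $X=0$.

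I expect the main obstacle to be the sign/ordering check in the reduction step. The task there is to verify that for $\ep=-$ blocks the reordering toward weakly decreasing rows both lowers $\preceq_\bfs$ and yields nonpositive $q$-powers. This amounts to a careful reading of \eqref{eq:heckeaction} with the reversed convention.
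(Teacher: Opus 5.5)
Your proposal is correct and is essentially the paper's argument: bar-triangularity of $\Pi_\bfA$, the uniqueness half of Lusztig's lemma, and the check that $\pounds_{\rho(\bfA)}\texttt{Sym}_{\bfla}$ meets the characterization. The differences are that you get existence directly from $\pounds_{\rho(\bfA)}$ instead of from Lusztig's lemma, and you actually carry out the reduction $M_g\texttt{Sym}_{\bfla}=q^{-\ell(\sigma)}M_{g'}\texttt{Sym}_{\bfla}$ with $g'\preceq_{\bfs}g$, which the paper compresses into ``we see that''; one correction to your uniqueness step is that in the super case the sets $\{\bfA'\mid\bfA'\preceq_{\bfs}\bfA\}$ are infinite and not downward well-founded (e.g.\ $(a,a)\prec_{\bfs}(0,0)$ for all $a<0$ when $\bfs=(+,-)$), so you should rely on finiteness of the intervals $[\bfA'',\bfA]$, which guarantees a nonzero $X$ has a maximal term and gives an immediate contradiction in place of an induction.
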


\begin{proof}
This is a natural extension of \cite[Theorem 10]{CCM23}, which dealt with the case when $\bfep$ is a standard sign sequence $(+^a, -^b)$.

By a standard argument (see an analogous proof of Theorem \ref{thm:DCB_P} for detail), we have
\begin{align*}
    \ov{\Pi_\bfA} \in \Pi_\bfA +  \sum_{\bfA'\prec_{\bfs_{\bflaep}} \bfA} \Z[q,q^{-1}] \Pi_{\bfA'},
\end{align*}
which usually involve infinite sums.
Hence, applying \cite[Lemma~ 24.2.1]{Lus10}, we obtain a dual canonical basis $\{L_\bfA\mid \bfA\in \Row(\bflaep)\}$
in $\widehat S^{\bflaep}(\VV\!_\cZ)$ with the desired properties.

Using the identification $\Pi_\bfA =M_{\rho(\bfA)}\texttt{Sym}_{\bfla}$ from Lemma \ref{lem:stdbasis}, we see that $\pounds_{\rho(\bfA)}\texttt{Sym}_{\bfla}$ satisfies the same characterization of the dual canonical basis element $L_\bfA$, and whence $L_\bfA=\pounds_{\rho(\bfA)}\texttt{Sym}_{\bfla}$.
\end{proof}

\subsection{$q$-exterior tensors}

We define the $k$-th $q$-exterior power, for $\ep=\pm$, to be
\[
\wedge^k(\VV^\ep) :=\bbT^k(\VV^\ep) \texttt{Ant}_k.
\]
Given a partition $\mu =(\mu_1,\ldots,\mu_\ell)$ and a sign $\ep$, we define the $\U$-modules
\begin{align*}
   \wedge^{\mu}(\VV^\ep) &:=\wedge^{\mu_1}(\VV^\ep) \otimes \cdots\otimes\wedge^{\mu_\ell}(\VV^\ep).
\end{align*}
Given a multi-partition $\bfmu =(\mu^{(1)}, \ldots, \mu^{(s)})$ and an associated sign sequence $\bfep =(\ep_1, \ldots, \ep_s)$, we define the $\U$-module
\begin{align} \label{wedgenueta}
\wedge^{\bfmue}(\VV) &:=\wedge^{\mu^{(1)}}(\VV^{\ep_1})\otimes \cdots \otimes\wedge^{\mu^{(s)}}(\VV^{\ep_s}).
\end{align}

Given a partition $\mu=(\mu_1,\ldots, \mu_\ell)$, we let $\texttt{Ant}_{\mu} =\texttt{Ant}_{\mu_1}\ldots\texttt{Ant}_{\mu_\ell}$  denote the $q$-anti-symmetrizer in $\mc H_{\mu}:  =\mc H_{\mu_1}\otimes \cdots \otimes \mc H_{\mu_\ell}$ (a subalgebra of $\mc H_{|\mu|})$.
For a multi-partition $\bfmu =(\mu^{(1)}, \ldots, \mu^{(s)})$, we let $\texttt{Ant}_{\bfla} =\texttt{Ant}_{\mu^{(1)}}\ldots\texttt{Ant}_{\mu^{(s)}}$ denote the $q$-anti-symmetrizer in $\mc H_{\bfmu}:  =\mc H_{\mu^{(1)}}\otimes \cdots \otimes \mc H_{\mu^{(s)}}$ (a natural subalgebra of $\mc H_{n+m}$). Let $w_{\bfmu}$ be the longest element in $\mf S_{\bfmu} =\mf S_{\mu^{(1)}}\times \ldots \times \mf S_{\mu^{(s)}}$, a Young subgroup of $\mf S_n\times \mf S_m$.

Recall the column reading $c(A)$ of a tableau $A$ in Section \ref{ssec:tab}. Let $\bfmue$ be of $(n|m)$-type, where $\bfmu =(\mu^{(1)}, \ldots, \mu^{(s)})$ and $\bfep=(\ep_1,\ldots,\ep_s)$. Given $\bfA=(A^{(1)},\ldots,A^{(s)})\in \Col(\bfmue)$, the column reading $c(\bfA) \in \Z^{n+m}$ is understood to be the column reading $c(A^{(1)})$, followed by $c(A^{(2)})$, and so on.
We define the following subset of $\Z^{n+m}$:
\[
\Z^{\bfmue}_+:=\{c(\bfA) \in\Z^{n+m} \mid \bfA \in\Col(\bfmue)\},
\]
and so we have a bijection $\Col(\bfmue) \rightarrow \Z^{\bfmue}_+, \bfA \mapsto c(\bfA)$, with the inverse map denoted by $f \mapsto \bfA_f.$ We shall refer to $f \in \Z^{\bfmue}_+$ as $\bfmue$-dominant. This generalizes $\Z^{k,\ep}_+$ from \eqref{eq:Zk+}.

For $f\in\Z^{\bfmue}_+$, we set $\mc K_f:=M_{fw_{\bfmu}}\texttt{Ant}_{\bfmu}$, and $\{\mc K_f\mid f\in\Z^{\bfmue}_+\}$ forms a monomial basis for $\wedge^{\bfmu}(\VV)$; we shall also write $\mc K_{\bfA_f}=\mc K_f$.

\subsection{The $\U$-modules $P^{\bflaep} (\VV)$}

Denote by $P^\la(\VV)$ the polynomial representation of $\U$ labeled by a partition $\la$. This is a straightforward $\infty$-rank version of the polynomial representations of $U_q(\mf{gl}_N)$ (for $N\ge n$) of highest weight $\la$, though at this infinite-rank limit $P^\la(\VV)$ is no longer of highest weight. Similarly, we have a $\U$-module $P^\la(\VV^-)$.

Denote by $\la'$ the transpose partition associated to $\la$.
For $\ep =\pm$, we consider the composition map
\begin{align} \label{def:xilambda}
\xi_\la:\wedge^{\la'} (\VV^\ep) \hookrightarrow \bbT^n(\VV^\ep) \stackrel{\calR_\la}{\longrightarrow} \bbT^n(\VV^\ep) \stackrel{\pi}{\twoheadrightarrow} S^{\la} (\VV^\ep).
\end{align}
Here, $\calR_\la$ denotes a suitable $R$-matrix and $\pi$ is the canonical projection; this is a reformulation of \cite[Page 39, (4.10)]{BK08}. It is well known (cf., e.g., \cite{Bru06}) that $\text{Im}\xi_\la$ is the irreducible $\U$-module $P^\la(\VV^\ep)$, and thus we have obtained the following maps:
\begin{align}  \label{EPS:V}
    \wedge^{\la'} (\VV^\ep) \twoheadrightarrow P^\la(\VV^\ep) \hookrightarrow S^{\la} (\VV^\ep).
\end{align}

\begin{ex}
    We have $P^{(n)}(\VV^\ep)= S^n (\VV^\ep)$ and $P^{(1^n)}(\VV^\ep)=\wedge^n (\VV^\ep)$.
\end{ex}

The $\U$-module $P^{\la}(\VV^\ep)$ admits the standard basis
\begin{align} \label{def:VA}
\{V_A:=\xi_\la(\mc K_A)\mid A \in \Std(\la,\ep)\}.
\end{align}
By \cite[Theorem 26]{Bru06} and \cite[(4.12)]{BK08} (which only consider the case $\ep=+$, but the extension to $\ep=-$ is trivial), we have that $\xi_\la(\mc L_A)\not=0$ if and only if $A\in\Std(\la,\ep)$, and furthermore the set $\{\xi_\la(\mc L_A)\mid A \in \Std(\la,\ep)\}$ forms the dual canonical basis in $P^\la(\VV^\ep)$ with respect to the standard basis $\{V_A\mid A\in\Std(\la,\ep)\}$.

We extend the above constructions to the setting of multi-partitions. Given  $\bfla =(\la^{(1)},\ldots, \la^{(r)})$ together with a sign sequence $\bfep=(\ep_1,\ldots,\ep_r)$, we define the $\U$-module and the $\U_\cZ$-module respectively:
\begin{align}
 \label{{eq:Plaep}}
P^{\bflaep} (\VV)
= \bigotimes_{i=1}^r P^{\la^{(i)}} (\VV^{\ep_i}),
\qquad
P^{\bflaep} (\VV\!\!_\cZ)
= \bigotimes_{i=1}^r P^{\la^{(i)}} (\VV^{\ep_i}\!\!\!\!\!_\cZ).
\end{align}
For $\bfA=(A^{(1)},\ldots, A^{(r)}) \in \Std(\bflaep)$, where $A^{(i)} \in \Std(\la^{(i)},\ep_i)$, we define the following element in $P^{\bflaep}_\cZ (\VV)$:
\begin{align} \label{def:DeltaA}
\Delta_\bfA :=\otimes_{i=1}^r V_{A^{(i)}}.
\end{align}
Then $\{\Delta_\bfA\mid \bfA\in \Std(\bflaep)\}$ forms a $\Q(q)$-basis for $P^{\bflaep} (\VV)$ and  an $\cZ$-basis for $P^{\bflaep} (\VV\!_\cZ)$. It will be called a {\em standard basis}.

\begin{ex}
\label{ex:special}
We list several distinguished special cases of $P^{\bflaep} (\VV)$ in \eqref{{eq:Plaep}} associated to standard sign sequences:
%, for partitions $\la=(\la_1,\ldots,\la_r)$ and $\mu=(\mu_1,\ldots,\mu_s)$:
\begin{enumerate}
    \item
    $S^{\la} (\VV) \otimes S^{\mu} (\VV^-)$, for $\bfla=\big((\la_1),\ldots,(\la_\ell),(\mu_1),\ldots,(\mu_p)\big)$, $\la=(\la_1,\ldots,\la_\ell), \mu=(\mu_1,\ldots,\mu_p)$, and $\bfep=(+^\ell,-^p)$.
    \item
    $P^{\la} (\VV) \otimes P^{\mu} (\VV^-)$, for $\bfla=(\la,\mu)$ and $\bfep=(+,-)$.
\item
$\bigwedge^{a_1}(\VV) \otimes \cdots \otimes \bigwedge^{a_r}(\VV)  \otimes \bigwedge^{b_1}(\VV^-) \otimes \cdots\otimes \bigwedge^{b_s}(\VV^-)$, for $\bfla =(\la^{(1)},\ldots,\la^{(r+s)})$ and $\bfep=(+^r,-^s)$, with $\la^{(i)} =(1^{a_i})$ and $\la^{(r+j)} =(1^{b_j})$, for $1\le i\le r$ and $1\le j\le s$.
\end{enumerate}
If we further set $s=0$ and $\mu=\emptyset$, then we obtain further specializations with the tensor factors involving $\VV^-$ dropped: $S^{\la} (\VV), P^{\la} (\VV)$, and $\bigwedge^{a_1}(\VV) \otimes \cdots \otimes \bigwedge^{a_r}(\VV)$.
\end{ex}

The embedding $\jmath_i: P^{\la^{(i)}}(\VV^{\ep_i}) \hookrightarrow S^{\la^{(i)}}(\VV^{\ep_i})$ from \eqref{EPS:V} leads to an embedding
\begin{align}
    \label{j:PS}
    \jmath: P^{\bflaep} (\VV) \longrightarrow S^{\bflaep} (\VV).
\end{align}

\subsection{Partial ordering on $\Std(\bflaep)$} \label{ssec:bruhat:order}

We define a partial order $\leq_{T,\ep}$ on standard tableaux as follows; cf.~\cite[\S2]{Bru06}. Fix $\ep=\pm$. Let $A, A' \in \Row(\la,\ep)$. Let $A_{\le r}$
denote the tableau obtained from $A$ by deleting all boxes in rows higher than the $r$th row, for $r\ge 1$; in particular, $A_{\le 1} =A$. Recall $\wt(A)$ from \eqref{wtP} and $\texttt{P}^+$ from \eqref{P+}. We declare that
\[
A'\le_{T,\ep} A \text{ if and only if } \ep\wt( A_{\leq r}) - \ep\wt(A'_{\leq r}) \in \texttt{P}^+ \, \forall r\ge 1 \text{ and } \wt( A) =\wt(A').
\]
As a subset of $\Row(\la,\ep)$, $\Std(\la,\ep)$ inherits the partial order $\leq_{T,\ep}$.
%{(Strictly speaking, $\Std(\la)\not\subseteq\Row(\la)$. I think $\text{Dom}(\la)\subseteq\Row(\la)$, where $\text{Dom}(\la)$ consists of those that have representatives in $\Col(\la)$. The partial order on $\text{Dom}(\la)$ is the same as the one defined on their representatives in $\Std(\la)\subseteq\Col(\la)$. Here, the partial on $\Col(\la)$ is defined by reading the columns instead of rows. Since the partial order on the exterior is different, this probably should be mentioned somewhere explicitly,.)}

We extend the partial order $\leq_{T,\ep}$ on tableaux to $\preceq_{T,\bfep}$ on tableaux associated to multi-partitions. Let $\bfA=(A^{(1)}, \ldots, A^{(r)}) \in \Std(\bflaep)$, where $A^{(i)}\in \Std(\la^{(i)},\ep_i)$. We define a sequence of (partial) weights for $\bfA$, for $1\le j\le r$:
\begin{align*}
\wt_{\bfep}^{j}(\bfA):= \sum_{i=j}^{r} \ep_i \wt(A^{(i)})
\in \texttt{P}.
\end{align*}
We set the (total) weight of $\bfA$ to be $\wt_{\bfep}(\bfA) :=\wt_{\bfep}^{1}(\bfA)$.

Generalizing the partial order $\preceq_\bfs$ on $\Z^{n+m}$, we define a partial order $\preceq_{T,\bfep}$ on $\Std(\bflaep)$ as follows. Let $\bfA=(A^{(1)}, \ldots, A^{(r)}), \bfA'=(A'^{(1)}, \ldots, A'^{(r)}) \in \Std(\bflaep).$ We say $\bfA'\preceq_{T,\bfep} \bfA$ if
$\wt_{\bfep}(\bfA') =\wt_{\bfep}(\bfA)$ and $\wt_{\bfep}^{i}(\bfA)- \wt_{\bfep}^{i}(\bfA') \in \texttt{P}^+ \; \forall i$; and moreover, $A'^{(i)} \le_{T,\ep_i}\!\! A^{(i)} \;\forall i$ in case $\wt_{\bfep}^{i}(\bfA)= \wt_{\bfep}^{i}(\bfA')\; \forall i$  (or equivalently, $\wt(A^{(i)})=\wt(A'^{(i)}) \;\forall i$). We write $\bfA'\prec_{T,\bfep} \bfA$ if $\bfA'\preceq_{T,\bfep} \bfA$ and $\bfA'\neq \bfA$.

\subsection{Dual canonical basis for $\widehat P^{\bflaep} (\VV)$}

Denote by $\widehat P^{\bflaep} (\VV)$ and $\widehat P^{\bflaep} (\VV\!_\cZ)$ the completion with respect to the standard basis in partial order $\prec_{T,\bfep}$. Lusztig \cite{Lus92} constructed (dual) canonical bases on tensor product modules of finite-dimensional highest weight modules more generally and somewhat differently. We have the following multi-partition generalization of \cite[Theorem 26]{Bru06}.

\begin{thm} \label{thm:DCB_P}
There exists a dual canonical basis $\{[L_\bfA \mid \bfA\in \Std(\bflaep)\}$ for $\widehat P^{\bflaep} (\VV)$, which is characterized by the bar invariance $\ov{L_\bfA}=L_\bfA$ and the property
    \[
    L_\bfA \in \Delta_\bfA + \sum_{\bfA'\prec_{T,\bfep} \bfA} q^{-1} \Z[q^{-1}] \Delta_{\bfA'}.
    \]
    (Infinite sums are allowed here.) Moreover, $\{[L_\bfA \mid \bfA\in \Std(\bflaep)\}$ forms a $\Z[q,q^{-1}]$-basis for $\widehat P^{\bflaep} (\VV\!_\cZ)$.
\end{thm}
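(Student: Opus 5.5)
The plan is the standard route through Lusztig's Lemma 24.2.1 \cite{Lus10}. It suffices to show that the bar involution on $\widehat P^{\bflaep}(\VV)$ is unitriangular on the standard basis:
\[
\ov{\Delta_\bfA}\in \Delta_\bfA+\sum_{\bfA'\prec_{T,\bfep}\bfA}\Z[q,q^{-1}]\,\Delta_{\bfA'}.
\]
We also need the interval finiteness needed for the completion. The bar involution on the tensor product is $\ov{x}=\Theta(\ov{x_1}\otimes\cdots\otimes\ov{x_r})$, iterated via the quasi-$R$-matrix $\Theta$ adapted to the comultiplication \eqref{eq:Delta}. This involution is compatible with the embedding $\jmath$ into $\widehat S^{\bflaep}(\VV)\subset\widehat{\bbT}^{\bfs_{\bflaep}}$, because each $\xi_{\la^{(i)}}$ is a bar-equivariant $\U$-map.

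First step: the single-factor case. For $r=1$ this is \cite[Theorem 26]{Bru06}. There $\ov{V_A}\in V_A+\sum_{A'<_{T,\ep}A}\cZ V_{A'}$, and $\{\xi_\la(\mc L_A)\}$ is the dual canonical basis. For $\ep=-$ the same argument applies after reversing the orders, which matches the reverse-tableau conventions.

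Second step: induction on $r$. Write $P^{\bflaep}(\VV)=P'\otimes P^{\la^{(r)}}(\VV^{\ep_r})$. Then
\[
\ov{\Delta_{\bfA}}=\Theta\bigl(\ov{\Delta_{\bfA^{\le r-1}}}\otimes\ov{V_{A^{(r)}}}\bigr).
\]
The quasi-$R$-matrix is $\Theta=\sum_\nu\Theta_\nu$ with $\Theta_0=1\otimes1$ and $\Theta_\nu\in \U^-_\nu\otimes\U^+_\nu$ (up to the convention of \eqref{eq:Delta}). Each term with $\nu\ne0$ moves weight between the two factors. It strictly changes the partial weight $\wt^{r}_{\bfep}$ by an element of $\texttt P^+$ in the correct direction, while keeping the total weight fixed. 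This uses that $E_a$ raises weights in $\VV^+$ and acts dually on $\VV^-$; the signs $\ep_i$ in $\wt^i_\bfep$ are exactly what make the direction uniform. The $\nu=0$ term gives $\ov{\Delta_{\bfA^{\le r-1}}}\otimes\ov{V_{A^{(r)}}}$, and this is handled by the induction hypothesis and the first step. Together these give $\bfA'\preceq_{T,\bfep}\bfA$, with the tie-breaking case (equal partial weights) governed by the factorwise orders $\le_{T,\ep_i}$. Integrality follows because $\Theta$ lies in a completion of $\U^-_\cZ\otimes\U^+_\cZ$, and because $P^{\la}(\VV^\ep_\cZ)$ is a $\U_\cZ$-lattice spanned by the $V_A$, using \cite{BK08} for the latter.

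Main obstacle: convergence. The sums over $\bfA'$ are infinite, since $\VV$ has infinite rank. We therefore need, for fixed $\bfA$ and a fixed $\bfA''\prec\bfA$, that only finitely many $\bfA'$ satisfy $\bfA''\preceq\bfA'\preceq\bfA$. Lusztig's recursive construction needs this for the coefficients, and the bar map must extend continuously and be an involution on the completion. I would deduce this by comparing with $\preceq_{\bfs_{\bflaep}}$ on $\Z^{n+m}$ through $\jmath$ and Lemma~\ref{lem:stdbasis}, where the corresponding finiteness is known from \cite{CLW15,Bru03}. Alternatively, one can truncate to finite rank $\gl_N$, where all modules are finite-dimensional and Lusztig's results apply directly, and then pass to the limit $N\to\infty$ using stability of the bases under truncation; this is the remark in the introduction that finite-rank variants exist.

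Final step. Given the unitriangularity and the finiteness, \cite[Lemma 24.2.1]{Lus10} yields unique bar-invariant $L_\bfA\in\Delta_\bfA+\sum_{\bfA'\prec_{T,\bfep}\bfA}q^{-1}\Z[q^{-1}]\Delta_{\bfA'}$. These elements are unitriangular with respect to the $\cZ$-basis $\{\Delta_\bfA\}$, so they form a $\cZ$-basis of $\widehat P^{\bflaep}(\VV\!_\cZ)$.
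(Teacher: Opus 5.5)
Your proposal is correct and follows essentially the same route as the paper. You prove unitriangularity of the bar involution on $\{\Delta_\bfA\}$ by combining Brundan's single-factor result with $\Theta=1\otimes 1+\sum_{\gamma\neq 0}\Theta_\gamma$ and a weight argument, iterate over the tensor factors, and conclude with \cite[Lemma~24.2.1]{Lus10}; your extra attention to interval finiteness and convergence in the completion covers points the paper leaves implicit. One small correction: with the paper's comultiplication one has $\Theta_\gamma\in\U^+_\gamma\otimes\U^-_{-\gamma}$, not $\U^-\otimes\U^+$, and this is exactly what makes the tail weight $\wt^r_{\bfep}$ drop by $\gamma\in\texttt{P}^+\setminus 0$, giving $\prec_{T,\bfep}$ rather than the opposite order.
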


\begin{proof}
Denote by $\psi$ the bar involution on the tensor product module $S^{\bflaep}(\VV)$ and on its submodule $P^{\bflaep} (\VV)$. We shall show that, for $\bfA\in \Std(\bflaep)$,
\begin{align} \label{bar:DeltaA}
    \psi(\Delta_{\bfA})\in \Delta_{\bfA} +\sum_{\bfA''\prec_{T,\bfep} \bfA}\Z[q,q^{-1}] \Delta_{\bfA''},
\end{align}
which may involve infinite sums.
For the sake of notational simplicity, we shall only write down the details for the case when $\bfla=(\la^{(1)},\la^{(2)})$, and hence $\bfA =(A^{(1)},A^{(2)}), \bfep=(\ep_1,\ep_2)$, and $\Delta_{\bfA} =V_{A^{(1)}} \otimes V_{A^{(2)}}$. The general case is easily obtained similarly by iterating $\psi$ and the argument below.

Thanks to the convention of the comultiplication $\Delta$ from \eqref{eq:Delta}, the quasi-$\mc R$-matrix for $\U$ takes the following form
\begin{align}\label{q-r-matrix}
    \Theta=1\otimes 1+\sum_{\gamma\in\texttt{P}^+\backslash 0}\Theta_{\gamma},
    \quad \text{for }\; \Theta_\gamma \in \U^+_{\gamma}\otimes \U^-_{-\gamma},
\end{align}
where $\U^\pm_{\pm\gamma}$ denotes the weight subspaces of halves of the quantum group $\U^\pm$ of weight $\pm\gamma$. Now, for the tensor product of two based modules $W'$ and $W$, a new bar involution $\psi$ on the tensor product module $W'\otimes W$ is defined by $\psi(v\otimes w) :=\Theta(\ov{v}\otimes\ov{w})$ by \cite[27.3.1]{Lus10}.

Recall \cite{Bru06} that
\[
\ov{V_{A^{(i)}}}\in V_{A^{(i)}}+\sum_{A'^{(i)}\prec_{T,\ep_i} A^{(i)}}\Z[q,q^{-1}]V_{A'^{(i)}},
\qquad \text{for } i=1,2.
\]
By definition of the partial order $\prec_{T,\bfep}$ and for weight reason, we have
\[
\Theta_\gamma(V_{A'^{(1)}}\otimes V_{A'^{(2)}}) \in \sum_{(A''^{(1)},A''^{(2)})\prec_{T,\bfep} (A'^{(1)},A'^{(2)})}\Z[q,q^{-1}] V_{A''^{(1)}}\otimes V_{A''^{(2)}}, \quad \text{ for }\gamma \in \texttt{P}^+\backslash 0.
\]
Hence by \eqref{q-r-matrix} we have
\begin{align*}
\psi(V_{A^{(1)}}\otimes V_{A^{(2)}})
&=\Theta(\ov{V_{A^{(1)}}}\otimes \ov{V_{A^{(2)}}})
=\ov{V_{A^{(1)}}}\otimes \ov{V_{A^{(2)}}} +\sum_{\gamma\in \texttt{P}^+\backslash 0} \Theta_\gamma (\ov{V_{A^{(1)}}}\otimes \ov{V_{A^{(2)}}})
\\
&\in V_{A^{(1)}}\otimes V_{A^{(2)}} +\sum_{(A''^{(1)},A''^{(2)})\prec_{T,\bfep} (A^{(1)},A^{(2)})}\Z[q,q^{-1}] V_{A''^{(1)}}\otimes V_{A''^{(2)}}.
\end{align*}
This proves \eqref{bar:DeltaA}. The theorem now follows by applying \cite[Lemma~ 24.2.1]{Lus10}.
\end{proof}

Here we use the same notation $L_\bfA$ for dual canonical bases in $P^{\bflaep} (\VV)$ as for $S^{\bflaep} (\VV)$ in Theorem \ref{thm:DCB:Slambda}. There is no ambiguity here thanks to the following.
\begin{prop}  \label{prop:sameDCB}
    The embedding $\jmath: \widehat P^{\bflaep} (\VV) \rightarrow \widehat S^{\bflaep} (\VV)$ (commutes with the bar involutions and) sends $L_{\bfA}$ in $\widehat P^{\bflaep} (\VV)$ to $L_{\bfA}$ in $\widehat S^{\bflaep} (\VV)$, for $\bfA\in \Std ({\bflaep})$.
\end{prop}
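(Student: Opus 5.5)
The plan is to verify that $\jmath(L_\bfA)$ satisfies the characterization of $L_\bfA$ in Theorem \ref{thm:DCB:Slambda}, and then conclude by uniqueness.

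\textbf{Bar-compatibility and reduction to one factor.} Both $\widehat P^{\bflaep}(\VV)$ and $\widehat S^{\bflaep}(\VV)$ carry the tensor product bar involution $\psi(v\otimes w)=\Theta(\ov v\otimes \ov w)$, built from the factors via the quasi-$\mc R$-matrix. Since $\jmath=\otimes_i \jmath_i$ is a $\U$-module map, it commutes with $\Theta$. Hence $\jmath$ commutes with $\psi$ as soon as each $\jmath_i: P^{\la^{(i)}}(\VV^{\ep_i})\hookrightarrow S^{\la^{(i)}}(\VV^{\ep_i})$ commutes with the bar involutions on a single factor. For one factor I would use $\jmath_i\circ\xi_\la$ from \eqref{def:xilambda}. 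The inclusion $\wedge^{\la'}\hookrightarrow \bbT^n$ and the projection $\pi$ are bar-compatible, because $\texttt{Ant}$ and $\texttt{Sym}$ are bar invariant. The $R$-matrix $\calR_\la$ is a product of bar-invariant Hecke elements (cf. \cite{Bru06,BK08}), so $\xi_\la$ commutes with bar. Since $\xi_\la$ is surjective onto $P^\la$, the bar involution on $P^\la$ is the one restricted from $S^\la$. I also need $\jmath$ to extend to the completions. For this I would show that $\Delta_\bfA$ maps into $\Pi_{\bfA}$ plus terms lower in $\preceq_{\bfs_{\bflaep}}$, where $\Std(\bflaep)\subset\Row(\bflaep)$ via row equivalence.

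\textbf{Triangularity of $\jmath(V_A)$.} This is the key step. For $A\in\Std(\la,\ep)$, I would compute
\[
\jmath(V_A)\in \Pi_A+\sum_{B\prec_{\bfs} A}q^{-1}\Z[q^{-1}]\,\Pi_B .
\]
This should follow from Brundan's explicit description \cite[\S 2, Theorem 26]{Bru06}. There the image of $\mc K_A$ under $\xi_\la$ in the monomial basis of $S^\la$ has leading term $\Pi_A$, with lower terms in $q^{-1}\Z[q^{-1}]$ coming from the Hecke action on anti-dominant reorderings. Tensoring these statements over the factors is unproblematic. The multi-factor order $\preceq_{\bfs_{\bflaep}}$ is refined by $\prec_{T,\bfep}$ on standard tableaux: agreement of the partial weights gives agreement of $\wt^j_\bfs$ at block boundaries, and the $\le_{T,\ep}$ condition controls the positions inside a block.

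\textbf{Conclusion.} By Theorem \ref{thm:DCB_P}, $L_\bfA\in\Delta_\bfA+\sum_{\bfA'\prec_{T,\bfep}\bfA}q^{-1}\Z[q^{-1}]\Delta_{\bfA'}$. Applying $\jmath$ and the triangularity above gives
\[
\jmath(L_\bfA)\in\Pi_\bfA+\sum_{\bfB\prec_{\bfs_{\bflaep}}\bfA}q^{-1}\Z[q^{-1}]\,\Pi_\bfB,
\]
which requires that $\bfA'\prec_{T,\bfep}\bfA$ implies $\bfA'\prec_{\bfs_{\bflaep}}\bfA$. Combined with bar invariance, the uniqueness in Theorem \ref{thm:DCB:Slambda} (Lusztig's lemma) yields $\jmath(L_\bfA)=L_\bfA$.

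\textbf{Main obstacle.} I expect the hardest part to be the triangularity step, specifically getting the $q^{-1}\Z[q^{-1}]$ coefficients for $\jmath(V_A)$ rather than mere $\Z[q,q^{-1}]$ ones. Without that, only unitriangularity holds and uniqueness cannot be applied directly. The order comparison is also delicate, especially for $\ep=-$. If Brundan's coefficient statement does not transfer cleanly, my fallback is the identification $L_\bfA=\pounds_{\rho(\bfA)}\texttt{Sym}_\bfla$ from Theorem \ref{thm:DCB:Slambda}, combined with the statement from \cite{Bru06} that $\xi_\la$ sends dual canonical basis elements of $\wedge^{\la'}$ to dual canonical basis elements of $S^\la$. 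This would reduce the problem to comparing $\pounds$'s in $\widehat\bbT^{\bfs}$ under bar-invariant Hecke operators.
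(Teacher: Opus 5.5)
Your strategy is the paper's: show that $\jmath(L_\bfA)$ is bar-invariant and satisfies the characterization in Theorem~\ref{thm:DCB:Slambda}, using the one-partition case from \cite{Bru06} as input. However, the step you say is required is false. The implication $\bfA'\prec_{T,\bfep}\bfA\Rightarrow\bfA'\prec_{\bfs_{\bflaep}}\bfA$ does not hold in general.

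The reason is in the definition of $\preceq_{T,\bfep}$. The conditions $A'^{(i)}\le_{T,\ep_i}A^{(i)}$ are imposed only when all partial weights $\wt^i_\bfep$ coincide. Otherwise you only have inequalities at block boundaries, and these do not control tails of the row reading that begin inside a block. Here is a concrete counterexample.
\begin{itemize}
\item Take $\bfla=((1),(2,1))$ and $\bfep=(+,+)$.
\item Let $A$ be the standard $(2,1)$-tableau with top row $2$ and bottom row $(0,3)$. Let $A'$ be the one with top row $4$ and bottom row $(0,2)$.
\item Put $\bfA=([4],A)$ and $\bfA'=([3],A')$.
\end{itemize}
Then $\wt^1_\bfep(\bfA)=\wt^1_\bfep(\bfA')$ and $\wt^2_\bfep(\bfA)-\wt^2_\bfep(\bfA')=\delta_3-\delta_4\in\texttt{P}^+$, so $\bfA'\prec_{T,\bfep}\bfA$. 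On the other hand $\rho(\bfA)=(4,2,0,3)$ and $\rho(\bfA')=(3,4,0,2)$. Since $\wt^3_{\bfs_{\bflaep}}(\rho(\bfA))-\wt^3_{\bfs_{\bflaep}}(\rho(\bfA'))=\delta_3-\delta_2\notin\texttt{P}^+$, we get $\rho(\bfA')\not\preceq_{\bfs_{\bflaep}}\rho(\bfA)$.

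Two consequences follow. Substituting the $\prec_{T,\bfep}$-expansion of $L_\bfA$ termwise does not give a $\prec_{\bfs_{\bflaep}}$-triangular expansion. Also, triangularity of each single $\jmath(\Delta_\bfA)$ does not by itself show that $\jmath$ carries $\prec_{T,\bfep}$-completed sums into $\widehat S^{\bflaep}(\VV)$.

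The order comparison is in fact unnecessary, and dropping it also removes your ``main obstacle''. Let $\mathcal L$ be the (completed) $\Z[q^{-1}]$-span of the $\Pi_{\mathbf B}$, and $\mathcal L_i$ the analogous lattice in $S^{\la^{(i)}}(\VV^{\ep_i})$. The argument runs as follows.
\begin{enumerate}
\item The one-partition case $\jmath_i(L_A)=L_A$ from \cite{Bru06} is what the paper cites. Invert the finite unitriangular matrix expressing $L_A$ in terms of the $V_{A'}$; its off-diagonal entries lie in $q^{-1}\Z[q^{-1}]$. This gives $\jmath_i(V_A)\in\Pi_A+q^{-1}\mathcal L_i$.
\item Tensoring, $\jmath(\Delta_{\bfA'})\in\mathcal L$ for all $\bfA'$, and $\jmath(\Delta_\bfA)\in\Pi_\bfA+q^{-1}\mathcal L$.
\item Hence $y:=\jmath(L_\bfA)-L_\bfA$ is bar-invariant and lies in $q^{-1}\mathcal L$.
\item If $y\neq0$, pick $\mathbf B$ maximal for $\preceq_{\bfs_{\bflaep}}$ in its support. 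Since $\ov{\Pi_{\mathbf B'}}\in\Pi_{\mathbf B'}+\sum_{\mathbf B''\prec_{\bfs_{\bflaep}}\mathbf B'}\Z[q,q^{-1}]\,\Pi_{\mathbf B''}$, the coefficient of $\Pi_{\mathbf B}$ in $y$ is bar-invariant. It also lies in $q^{-1}\Z[q^{-1}]$, so it is zero, a contradiction. Thus $y=0$.
\end{enumerate}
The $\prec_{\bfs_{\bflaep}}$-triangularity of $\jmath(L_\bfA)$ is then a consequence rather than an input. Maximal elements exist automatically when weight spaces are finite-dimensional. In the completed super setting, their existence amounts to $\jmath$ being well defined on completions, which the paper also takes for granted.

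Separately, your justification of bar-equivariance of $\xi_\la$ is not right. $\calR_\la$ is not a product of bar-invariant Hecke elements: for $\la=(2,2)$ it interchanges the two middle tensor factors, so it acts by $H_2^{\pm1}$ up to a nonzero scalar, which is not bar-invariant. What is bar-invariant is its composite with $\texttt{Ant}_{\la'}$ and $\texttt{Sym}_\la$. That fact is part of what is imported from \cite{Bru06}.
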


\begin{proof}
This is known when $\bfla=\la$ is a partition; see \cite{Bru06} (who considers the case for $\ep=+$, and the case for $\ep=-$ is the same), where it is shown that $V_A$, for $A\in \Std(\la)$, lies in the $\Z[q^{-1}]$-lattice spanned by $\{M_B \mid B\in \Row(\la)\}$. The general case here follows by the same argument by showing $\jmath (L_{\bfA})$ satisfies the characterization of a dual canonical basis element in $S^{\bflaep} (\VV)$. We skip the detail.
\end{proof}

%%%%%%%%
%%%%%%%%

\section{Character formulas of simple modules in super type $A$}
\label{sec:character}

In this section, we formulate general parabolic categories for $W$-superalgebras of type $A$, and introduce standard modules in these categories. We show that these categories and the irreducibles categorify the tensor product $\U$-modules and dual canonical bases given in the previous section.

\subsection{Gradings on $\glnm$ and Levi}

Let
\begin{align} \label{Inm}
I(n|m) :=\{1,\ldots,n, \overline{1}, \ldots, \overline{m}\},
\end{align}
which is totally ordered by $1<\ldots <n<\bar 1<\ldots <\bar m$. Let $\g=\glnm$ be the general linear Lie superalgebra, consisting of
$(n+m) \times (n+m)$-matrices over $\C$. It admits a $\Z_2$-grading $\g=\g_\oa \oplus \g_\ob$, where $\g_\oa =\gl_n \oplus \gl_m$. We can and often work with different matrix forms where the parity of rows (or columns) may not be standard.

We fix a non-degenerate invariant bilinear form on $\g$ denoted $(\cdot|\cdot)$. Let $e\in\g$ be a nilpotent element in $\g_\oa$ and $\{e,h,f\}$ be an $\mf{sl}(2)$-triple in $\g_\oa$ (and hence in $\g$). We let $\h$ be a Cartan subalgebra of $\g$ such that $h\in\h$.

For a subset $\mf a\subseteq\g$, we denote $\g_{\mf a}=\{x\in\g\mid[x,y]=0,\forall y\in\mf a\}$. Let
\begin{align}
    \label{eq:t}
\mf t:=\h_{e}=\{a\in\h\mid [a,e]=0\}.
\end{align}
Recall that a subalgebra $\mf r\subseteq\mf t$ is called a {\it full subalgebra} (cf. \cite[\S3.1]{BG13}) if the center of $\mf g_{\mf r}$ is equal to $\mf r$.

Let $T$ be the adjoint group of $\mf t$.
Let $\theta\in\mf t$ be an {\it integral element}, i.e., $\theta$ is an element in the cocharacter of $T$. The element $\theta\in\mf t$ determines a {\em minimal} full subalgebra $\mf r$ of $\mf t$ in the sense that $\theta\in\mf r\subseteq\mf t$ and $\theta$ is regular in $\mf r$; hence $\g_\theta =\g_{\mf r}$. We shall assume that
\[
\mf l:=\mf g_{\mf r} \text{ lies in } \g_\oa,
\]
which then must be reductive. The inclusion of subalgebras $0\subseteq\mf r\subseteq\mf t$ gives rise to an inclusion of Lie subalgebras $\g_{\mf t}\subseteq\g_{\mf r} \ (\subseteq \g_\oa )\subset \g$. It follows by definition of $\mf t$ in \eqref{eq:t} that $e \in \g_{\mf t}$, and hence we have
\begin{align}  \label{eq:el}
    e \in \mf l.
\end{align}
We have an $\ad\,\theta$-eigenspace decomposition of $\g$:
\begin{align}
\label{eq:eigen}
\begin{split}
    \g &=\bigoplus_{k\in\Z}\g_{\theta,k},
    \\
    \g_{\theta,k} &:=\{x\in\g\mid[\theta,x]=k x\},
    \quad \text{ with } \mf l=\g_{\theta,0}.
\end{split}
\end{align}
Let $\Phi$ be the root system for $(\g,\h)$, and $\g_\alpha$ be the root space for $\alpha \in \Phi$. Note that $\g_\alpha \subseteq \g_{\theta,\alpha(\theta)}.$

We choose a triangular decomposition
\begin{align}
		\g=\mf n^- \oplus \h \oplus \mf n
  \label{eq:tri}
\end{align}
to be compatible with \eqref{eq:eigen} in the following sense: $\alpha (\theta) >0$ for $\alpha\in \Phi$ implies that $\alpha \in \Phi^+$. Here $\Phi^+$ denotes the set of positive roots corresponding to the simple system $\Pi$ for $\g$ such that $\mf n =\oplus_{\alpha \in \Phi^+} \g_\alpha$. Denote
\begin{align}
\label{eq:Levi}
\Phi_{\mf l} =\Phi \cap \{\alpha \in \Phi\mid \alpha(\theta)=0\},
\quad
\Phi_{\mf l}^+ =\Phi^+ \cap \Phi_{\mf l},
\quad
\Pi_{\mf l}=\Pi \cap \Phi_{\mf l}.
\end{align}
Then $\mf l$ is a Levi subalgebra of $\g$ with simple system $\Pi_{\mf l}$, and
\begin{align}
  \label{eq:udecomp}
	\mf g=\mf u^-\oplus \mf l \oplus \mf u,
 \qquad
 \mf l= \h\oplus \bigoplus_{\alpha \in \Phi_{\mf l}} \fg_\alpha,
\end{align}
where
\begin{align}
 \label{eq:paracomp}
\quad\fu:=\bigoplus_{\alpha(\theta)>0} \fg_\alpha,
\qquad
\fu^-:=\bigoplus_{\alpha(\theta)>0} \fg_{-\alpha}.
\end{align}
Note that $\mf u \subset \mf n$ and $\mf u^- \subset \mf n^-$.

\subsection{Finite $W$-superalgebras}
\label{ssec:finiteW}

 We first recall the general construction of finite $W$-superalgebras, for any basic Lie superalgebra $\g=\g_\oa \oplus \g_\ob$; cf., e.g., \cite{Los10, Wan11}, before specializing to $\g=\glnm$.

Let $e\in \g_\oa$, and pick a good grading of $\g$ associated to $e$:
\begin{align} \label{goodgrading}
\g=\bigoplus_{j\in\Z}\g(j),
\end{align}
i.e., $e\in \g(2)$ such that $\ad e: \g(i) \rightarrow\g(i+2)$ is injective for $i\le -1$ and surjective for $i\ge -1$.
The element $e$ defines a linear map
$\chi:\g\rightarrow \C$ via $\chi(x):=(e|x).$
This leads to an even super-skewsymmetric bilinear form
$ \omega_\chi:\g\times\g\rightarrow\C$ defined by
 $\omega_\chi(x,y)=\chi([x,y]).$
This form restricts to a non-degenerate super-symplectic bilinear form $\omega_\chi:\g(-1)\times\g(-1)\rightarrow\C$. Note that $\dim\g(-1)_\oa$ is even, and we shall assume that $\dim\g(-1)_\ob$ is even; this assumption is always valid for $\g=\glnm$.

One checks by \eqref{eq:t} that $\ad \,\mf t$ preserves the super-symplectic form $\omega_\chi|_{\g(-1)}$. Hence we can choose an $\ad\,\mf t$-invariant  Lagrangian subspace $\underline{\mf l \mf s}$ of $\g(-1)$ with respect to $\omega_\chi$ and define
$\mf m:=\underline{\mf l \mf s}\oplus\bigoplus_{j<-1}\g(j).$
Note that $\chi$ is a character of $\mf m$ which vanishes on $\mf m_\ob$. Set
$ \mf m_\chi:=\{x-\chi(x)\mid x\in\mf m \}$
and let $I_\chi$ be the left ideal of $U(\g)$ generated by $\mf m_\chi$. Also let $Q_\chi$ denote the left $U(\g)$-module $U(\g)/I_\chi$. The finite $W$-superalgebra associated to $e$ is defined to be the associative superalgebra
\begin{align*}
   U(\g,e) =\text{End}_\g(Q_\chi)^{\text{opp}}.
\end{align*}
Therefore, $Q_\chi$ is a $\big(U(\g), U(\g,e)\big)$-bimodule. It is well known that we can identify
\begin{align*}
    U(\g,e)&\equiv \left(U(\g)/I_\chi\right)^{\ad\,\mf m}\\
    &=\{u+I_\chi\in U(\g)/I_\chi\mid [x,u]\in I_\chi,\forall x\in\mf m\}.
\end{align*}
Since the Lagrangian subspace $\underline{\mf l \mf s}$ is $\ad\,\mf t$-invariant, we have that $\mf m$ is $\mf t$-invariant, and thus, $\chi([t,x])=0$, for all $t\in\mf t$ and $x\in\mf m$. Therefore, we conclude that
\begin{align}  \label{eq:tUge}
    \mf t\subseteq U(\g,e).
\end{align}
Suppose that the good grading of the $W$-algebra $U(\g,e)$ is even, i.e., $\g(j)=0$ unless $j\in 2\Z$. Then,
\[
\mf p:=\sum_{j\ge 0}\g(j)
\]
is a parabolic subalgebra of $\g$ with Levi subalgebra $\g(0)$. Recalling that $\mf m=\sum_{j<0}\g(j)$, we may identify $U(\g)/I_\chi$ with $U(\mf p)$, which in turn allows us to view $U(\g,e)$ as a subalgebra of $U(\mf p)$. Let the homomorphism $U(\mf p)\rightarrow U(\g(0))$ be induced by the natural projection $\mf p\twoheadrightarrow\g(0)$. The Miura transform
\begin{align} \label{eq:Miura}
\texttt{MT}: U(\g,e)\longrightarrow U\!\left(\g(0)\right)
\end{align}
is the restriction of this homomorphism to $U(\g,e)$. A $\g(0)$-module can be regarded as a $U(\g,e)$-module via pull-back by the Miura transform $\texttt{MT}$.

Now we specialize the above construction to $\g=\glnm$. Let $\bflaep$ be a signed multi-pyramid. We label the boxes in each of the signed pyramids according to the column reading of the $\la^{(k)}$s from top down as in Example \ref{ex:pyramid}. The corresponding even nilpotent element is
\begin{align*}
    e=\sum_{i,j}E_{ij},
\end{align*}
where the sum is over all $i,j$ belonging to the same pyramid $\la^{(k)}$ with $\text{row}(i)=\text{row}(j)$ and $\text{col}(i)=\text{col}(j)-1$. The associated good grading is given by:
\begin{align*}
    \deg E_{ab}=2(\text{col}(b)-\text{col}(a)),\quad a,b\in I(m|n).
\end{align*}
There exists an $h\in\h$ such that the eigenvalues of $\ad h$ coincide with the degrees. For example, we can take $h=\sum_{i=1}^{n+m} (\wp+1-2\text{col(i)})E_{ii}$, and then $\text{ad}h(E_{ab})= (\deg E_{ab})E_{ab}$, where $\wp_k$ and \[
\wp:=\max\{\wp_k\mid 1\le k\le r\}
\]
are the numbers of columns of $\la^{(k)}$ and $\bflaep$, respectively. Denote
\begin{align}  \label{eq:qj}
q^{(k)}_{j} = \text{length of the $j$th column of } \la^{(k)}, \quad \text{ and }\quad
q^{\ep}_{j} =\sum_{k:\ep_k=\ep}q^{(k)}_{j},
\end{align}
for $\ep=\pm.$ Then we have $\g(0)=\oplus_{k=1}^r\oplus_{j=1}^{\wp_k} \gl_{q^+_{j}|q^-_{j}}$.

\subsection{An example}
\begin{ex} \label{ex:pyramid}
Consider a multi-pyramid with a multi-partition $\bfla =(\la^{(1)},\la^{(2)},\la^{(3)},\la^{(4)})$, where
$\la^{(1)}= (3,3,1), \la^{(2)}=(4,2),\la^{(3)}=(2),\la^{(4)}=(3,1)$, as shown in the leftmost figure in \eqref{YDiagram}. We can consider different sign sequences for $\bfla$.

\iffalse
\begin{align}\label{YDiagram}
\begin{ytableau}
    *(orange) \none & \none & \none&\none\\
    *(orange)\none & *(orange)\none  & *(orange) \none&\none \\
    *(orange) & *(orange)  & *(orange) &\none \\
              &   &\none  &\none\\
              &   & \script  & \script\\
       *(brown) & *(brown)  & \none&\none \\
        *(green)\\
        *(green) & *(green)  & *(green)  &\none
\end{ytableau}
\qquad\qquad
\begin{ytableau}
    *(orange)1 & \none & \none&\none\\
    *(orange)2 & *(orange)4  & *(orange) 6&\none \\
    *(orange)3 & *(orange)5  & *(orange) 7&\none \\
             8 & 10  &\none  &\none\\
             9 & 11  & \script 12 & \script 13
\\
       *(brown)\overline{1} & *(brown)\overline{2}  & \none&\none \\
        *(green)\overline{3}\\
        *(green)\overline{4} & *(green)\overline{5}  & *(green)\overline{6}  &\none
\end{ytableau}
\qquad\qquad
\begin{ytableau}
    *(orange)1 & \none & \none&\none\\
    *(orange)2 & *(orange)4 & *(orange) 6&\none \\
    *(orange)3 & *(orange)5 & *(orange) 7&\none \\
    \overline{1}  & \overline{3} &\none  &\none\\
    \overline{2} & \overline{4} & \overline{5}  & \overline{6}
\\
       *(brown)8 & *(brown)9  & \none&\none \\
        *(green) \overline{7}\\
        *(green)\overline{8} & *(green)\overline{9}  & *(green)\overline{10}  &\none
\end{ytableau}
\end{align}
\fi

\begin{align}\label{YDiagram}
%1: The colored pyramid structure
\begin{ytableau}
    *(orange) & \none & \none & \none \\
    *(orange) & *(orange) & *(orange) & \none \\
    *(orange) & *(orange) & *(orange) & \none \\
    \quad & \quad & \none & \none \\
    \quad & \quad & \scriptstyle  & \scriptstyle  \\
    *(brown) & *(brown) & \none & \none \\
    *(green) \\
    *(green) & *(green) & *(green) & \none
\end{ytableau}
\qquad\qquad
%2: First indexing scheme
\begin{ytableau}
    *(orange)1 & \none & \none & \none \\
    *(orange)2 & *(orange)4 & *(orange)6 & \none \\
    *(orange)3 & *(orange)5 & *(orange)7 & \none \\
    8 & 10 & \none & \none \\
    9 & 11 & 12 & 13 \\
    *(brown)\bar{1} & *(brown)\bar{2} & \none & \none \\
    *(green)\bar{3} \\
    *(green)\bar{4} & *(green)\bar{5} & *(green)\bar{6} & \none
\end{ytableau}
\qquad\qquad
%3: Second indexing scheme
\begin{ytableau}
    *(orange)1 & \none & \none & \none \\
    *(orange)2 & *(orange)4 & *(orange)6 & \none \\
    *(orange)3 & *(orange)5 & *(orange)7 & \none \\
    \bar{1} & \bar{3} & \none & \none \\
    \bar{2} & \bar{4} & \bar{5} & \bar{6} \\
    *(brown)8 & *(brown)9 & \none & \none \\
    *(green)\bar{7} \\
    *(green)\bar{8} & *(green)\bar{9} & *(green)\bar{10} & \none
\end{ytableau}
\end{align}
\vspace{2mm}

(1) Consider a sign multi-partition $\bflaep$ with $\bfla$ above and $\bfep=(+,+,-,-)$. In this case, $\g =\gl_{13|6}$ with an even good grading on $\g$ determined by the multi-pyramid.

We consider the multi-tableaux in the middle of \eqref{YDiagram} labeled by $I(13|6)$ in a column-standard color-block-wise fashion.
Let $e$ denote
the matrix $\sum_{i,j}E_{ij}$ summing over all $i, j \in I(13|6)$ such that $\text{row}(i) = \text{row}(j)$ and
$\text{col}(i) = \text{col}(j)-1$. Then $e$ is nilpotent and has Jordan form $\bfla$.
The parabolic subalgebra $\mf p$ is spanned by $\{E_{ij}\mid \text{col}(j)\ge \text{col}(i)\}$, whose Levi subalgebra $\mf h$ is spanned by $\{E_{ij}\mid \text{col}(j)= \text{col}(i)\}$.

The grading is even, e.g., $\deg E_{4,7}=\deg E_{1,\overline{2}}=\deg E_{6,13}=2$, $\deg E_{1,3}= \deg E_{6,7}= 0$, $\deg E_{10,2}=\deg E_{10,\overline{1}}=-2$. In addition, we have $\g(0)=\gl_{5|3} \oplus\gl_{4|2} \oplus\gl_{3|1} \oplus\gl_1$.

The subalgebra $\mf t$ of $\g$ is $8$-dimensional and spanned by the 8 row idempotent matrices $\mc I_i$, for $i\in I(5|3) =\{1,2,3,4,5,\overline{1},\overline{2},\overline{3}\}$, e.g., $\mc I_1=E_{11},\, \mc I_3=E_{33}+E_{77}+E_{11,11},\, \mc I_{\overline{1}}=E_{\bar{1}\bar{1}}+E_{\bar{4}\bar{4}} $, and so on.
We have a $\mf t$-module isomorphism between $U(\g,e)$ and $U(\g_e)$.
An integral element $\theta\in\mf t$ is of the form
\begin{align}  \label{theta}
   \theta=\sum_{i\in I(5|3)} \theta_i\mc I_i,
   \qquad \text{ for } \theta_i\in\Z.
\end{align}
We impose the following conditions on $\theta$:
\begin{align} \label{theta:standard}
\begin{split}
\theta_i \geq \theta_{i'} &
\text{ whenever row $i$ is higher than row $i'$,}
\\
\text{ and } \theta_i=\theta_{i'} & \text{ if and only if row $i$ and row $i'$ have the same color}.
\end{split}
\end{align}
In our example, it means that
$\theta_1=\theta_2=\theta_3 > \theta_4=\theta_5 > \theta_{\overline{1}} >\theta_{\overline{2}}=\theta_{\overline{3}}$.
Then the Levi $\mf l=\g_\theta$ is given by $\mf l \cong \gl_{\orange{7}} \oplus \gl_{6} \oplus \gl_{\brown{2}} \oplus \gl_{\green{4}}$ with four summands corresponding to the four color-blocks. Viewed as an element in $\g$ or in $\mf l$, $e$ admits a Jordan form $\bfla$.

(2) We can consider other sign sequences associated to the same $\bfla$ above, say, $\bfep=(+,-,+,-).$ In this case, $\g =\gl_{9|10}$ with an even good grading on $\g$ determined by the above multi-pyramid. Then $\g(0) =\gl_{4|4} \oplus\gl_{3|3} \oplus\gl_{2|2} \oplus\gl_1$.
As we impose the same conditions \eqref{theta:standard} on $\theta$, then the Levi $\mf l$ is formally the same as in (1).

(3) In this example, we ignore the colors in \eqref{YDiagram}, and chose $\theta$ in \eqref{theta} satisfying \eqref{theta:standard} to be the ``principal" $\underline{\theta}$ such that $\theta_i$ are distinct for different row $i$. Then the corresponding Levi $\underline{\mf l} \cong \gl_1\times \gl_3\times \gl_3\times \gl_2\times \gl_4\times \gl_2\times \gl_1\times \gl_3$ and  $\underline{\mf l}$ is naturally a subalgebra of $\mf l$ (in (1) or (2) above). Note that $e$ is principal nilpotent in $\underline{\mf l}$.
\end{ex}

\subsection{Parabolic categories $\Pcat$}

By a straightforward super generalization of \cite[Theorem 3.8]{BGK08}, we have a (non-unique) $\mf t$-module isomorphism:
$ %\begin{align}\label{eq:uge}
    U(\g_e)\cong U(\g,e).
$ %\end{align}
Thus we have an $\ad\, \theta$-eigenspace decomposition
\begin{align}  U(\g,e)=U(\g,e)_0+\sum_{k\in\Z}U(\g,e)_k.
\end{align}
Set
\begin{align}
\begin{split}
U(\g,e)_{\ge 0} &:=\sum_{k\ge 0}U(\g,e)_k,
\qquad
U(\g,e)_{>0}:=\sum_{k> 0}U(\g,e)_k
\\
U(\g,e)_{\#} &:=U(\g,e)_{\ge 0}\bigcap U(\g,e)U(\g,e)_{>0}.
\end{split}
\end{align}
Then $U(\g,e)_{\#}$ is a two-sided ideal of $U(\g,e)_{\ge 0}$. We have by \cite[Theorem 4.1]{Los12} a canonical morphism
\begin{align}
  \label{eqn:iso:1}
    U(\g,e)_{\ge 0} \longrightarrow U(\g,e)_{\ge 0}/U(\g,e)_{\#}\cong U(\mf l,e).
\end{align}
Denote by $\widetilde{\mc O}(\theta)$ the category of finitely generated $U(\g,e)$-modules $M$ such that for any $x\in M$ there exists $n_x\in\Z$ with $U(\g,e)_k x=0$ for all $k\ge n_x$.

Now, given a $U(\mf l,e)$-module $V$, we can regard it as a $U(\g,e)_{\ge 0}$-module via the pullback \eqref{eqn:iso:1} and then apply induction to construct a $U(\g,e)$-module:
\begin{align}\label{eq::ind:w}
{\mc I}^{\g}(V)=U(\g,e)\otimes_{U(\g,e)_{\ge 0}}V.
\end{align}
This defines an exact functor ${\mc I}^{\g}$ from the category of finitely generated $U(\mf l,e)$-modules to $\widetilde{\mc O}(\theta)$.

On the other hand, for $M\in\widetilde{\mc O}(\theta)$ we define
\[
{\mc R}_{\mf l}(M) =\big\{x\in M\mid ux=0,\forall u\in U(\g,e)_{>0}\big\},
\]
which is naturally a $U(\mf l,e)$-module by \eqref{eqn:iso:1}. The functor ${\mc R}_{\mf l}$ is right adjoint to ${\mc I}^{\g}$.

Let $\mc O(\theta)$ be the subcategory of $\widetilde{\mc O}(\theta)$ of $U(\g,e)$-modules $M$ for which $\dim{\mc R}_{\mf l}(M)<\infty$ (optionally, we can impose the additional condition that $M$ is $\mf t$-semisimple, but it makes no  difference on the irreducible character formulas we will present).  The functor ${\mc I}^{\g}$ restricts to an exact functor from the category of finite-dimensional $U(\mf l,e)$-modules to $\mc O(\theta)$ with right adjoint being the restriction of the functor ${\mc R}_{\mf l}$ above:
\begin{align*}
U(\mf l,e)\mod \stackrel{\mc I^{\g}}{\longrightarrow}
\mc O(\theta),
\qquad
\mc O(\theta) \stackrel{\mc R_{\mf l}}{\longrightarrow} U(\mf l,e)\mod.
\end{align*}

Now if $V$ is a finite-dimensional irreducible $U(\mf l,e)$-module, then ${\mc I}^{\g}(V)$ has a composition series and a unique irreducible quotient $L^{\theta}(V)$, according to \cite[Corollary~ 3.6, Proposition 3.7]{Los12} and \cite{BG13}. In the special case when $e$ is principal nilpotent in $\mf l$, the constructions of the full (non-parabolic) category $\mathcal O$ was given earlier in \cite{BGK08}.

\vspace{2mm}
From now on we specialize to $\fg=\glnm$.

Given a multi-partition $\bfla =(\la^{(1)},\ldots, \la^{(r)})$ and an associated sign sequence $\bfep=(\ep_1,\ldots,\ep_r)$, we recall the compositions $\la\!^+, \la\!^-$ from \eqref{def:la_plusminus}. We assume that $\bflaep$ is of $(n|m)$-type in the sense that $|\la\!^+|=n$ and $ |\la\!^-|=m$. Set
$\Wg =U(\fg,e_{\bflaep})$, and choose an integral element $\theta$ whose corresponding Levi
is $\mf l = \prod_{i=1}^r \gl_{|\la^{(i)}|}$. We shall restrict ourselves to an integer weight variant of the  category $\mc O(\theta)$ of $\Wg$-modules, denoted by $\Pcat$; by integer weights here we mean that all the entries of relevant multi-tableaux under consideration are integers, not arbitrary complex numbers.

The finite-dimensional irreducibles in $\Wl$ are $L(\mf l,\bfA)$, for $\bfA \in \Std(\bflaep)$, and from now on we will write $L(\bfA) :=L^\theta(L(\mf l,\bfA))$ in $\Pcat$. Then $\{L(\bfA) \mid \bfA \in \Std(\bflaep)\}$ are all the irreducible objects (up to isomorphisms) in $\Pcat$.

\subsection{Standard modules}

Let $\bflaep$ be a signed multi-partition with $\bfla=(\la^{(1)}, \ldots, \la^{(r)})$ and $\bfep=(\ep_1,\ldots,\ep_r)$. We shall define several distinguished $\Wg$-modules associated with multi-tableaux $\bfA=(A^{(k)})_{1\le k\le r}\in\Std(\bflaep)$.

First, we interpret each $\bfA$ as a weight of the Cartan subalgebra. Denote by $a^{(k)}_{ij}$ the entry in the $i$th row and $j$th column of $\bfA$ which lies in $A^{(k)}$ -- the row/column labels here follow the convention in Section \ref{ssec:tab}. Denote by $n_k$ and $\ell_k$ the size and the length of the partition $\la^{(k)}$, respectively, for each $k$. We fix an enumeration of boxes in the multi-pyramid $\bfla$ by $I(n|m)$ following the examples in \eqref{YDiagram}, and let $\kappa(i,j,k)\in I(n|m)$ denote the label of the box occupied by the entry $a^{(k)}_{ij}$.
With the notation in place, we interpret $\bfA$ as the weight that transforms the matrix $E_{\kappa(i,j,k),\kappa(i,j,k)}$ by the scalar
\begin{align}\label{wt:aijplus}
%  \ep_ka_{ij}^{(k)}+i-1-\sum_{t<k}\ell_t +\sum_{t<k}\ep_t(n_t-1).
\ep_ka_{ij}^{(k)}+i-1 +\sum_{t<k}\big(\ep_t(n_t-1)-\ell_t\big).
\end{align}
For fixed $k$ and $j$, denote by $A_{j}^{(k)}$ the $j$th column of the tableau $A^{(k)}$, and we define $M(A_{j}^{(k)},\ep_k)$ and $V(A_{j}^{(k)},\ep_k)$ to be the Verma and the finite-dimensional irreducible module over $\gl_{q_j^{(k)}}$ of highest weight as in \eqref{wt:aijplus}, respectively. Then, following \cite[\S7.3]{BK08} we define a standard module over $U(\gl_{n_k},\la^{(k)})$ as the pullback via the Miura transform \eqref{eq:Miura} of the tensor product of $\gl(q_j^{(k)})$-modules $V(A_{j}^{(k)},\ep_k)$:
\begin{align} \label{eq:VA}
    V(A^{(k)},\ep_k) :=V(A_{1}^{(k)},\ep_k) \boxtimes \cdots \boxtimes V(A_{\wp_k}^{(k)},\ep_k).
\end{align}
Note that $V(A^{(k)},\ep_k)$ is finite dimensional.

We define a finite-dimensional $\Wl$-module
\begin{align} \label{eq:VA:Levi}
V(\mf l, \bfA):=\otimes_{i=1}^r V(A^{(i)},\ep_i)
\end{align}
and then a $\Wg$-module
\begin{align}  \label{module:DeltaA}
    \Delta(\bfA) :={\mc I}^{\g} \big(V(\mf l, \bfA)\big)
    =\Wg\otimes_{\Wg_{\ge 0}} V(\mf l,\bfA).
\end{align}
(The notation $\bfA$ here implicitly keeps track of $\bfep$ thanks to $\bfA\in\Std(\bflaep)$.)
Clearly $\Delta(\bfA)$ is in the category  $\Pcat$, and it is called a {\em standard module}.

\begin{ex} \label{ex:fullcat}
    Associated to $(\bfla,\bfep)$ we have a new signed multi-partition $(\ula,\uep)$ from \eqref{def:ula_uep}. The category $\calP(\ula,\uep)$  is the full category $\calO$, denoted by $\Ofull$, for the W-superalgebra $\Wg$ with $U(\mf l, \uep\,\ula)$ as its Cartan. (In the non-super setting, i.e., $\bfep=(+^r)$, $\Ofull$ was introduced in \cite{BGK08}.) Moreover, for $\bfA \in \Row(\bflaep)= \Std(\uep\,\ula)$, the standard module $\Delta(\bfA)$ coincides with the Verma module $M(\bfA)$, as the Cartan $U(\mf l, \uep\,\ula)$ is abelian and $V(\mf l,\bfA)$ in \eqref{eq:VA:Levi} is one-dimensional.
(In contrast, in a general parabolic category, the standard modules may not coincide with the parabolic Verma modules.)
\end{ex}

\begin{lem}
    \label{lem:irrep}
We have a surjective homomorphism $\Delta(\bfA) \rightarrow L(\bfA)$, for $\bfA\in \Std(\bflaep)$.
\end{lem}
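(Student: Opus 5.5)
The surjection will come from the Levi level. I will first show that $V(\mf l,\bfA)$ has the irreducible $\Wl$-module $L(\mf l,\bfA)$ as a quotient. Then I will apply the exact functor ${\mc I}^{\g}$ and compose with the canonical quotient ${\mc I}^{\g}(L(\mf l,\bfA))\twoheadrightarrow L^\theta(L(\mf l,\bfA))=L(\bfA)$. Right exactness of $\Wg\otimes_{\Wg_{\ge 0}}-$ turns a surjection $V(\mf l,\bfA)\twoheadrightarrow L(\mf l,\bfA)$ into a surjection $\Delta(\bfA)\twoheadrightarrow {\mc I}^{\g}(L(\mf l,\bfA))$. The second map is surjective by \cite[Corollary~3.6, Proposition~3.7]{Los12} and \cite{BG13}, as recalled before the definition of $\Pcat$. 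So everything reduces to a statement about the Levi.

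Since $\mf l=\prod_i \gl_{|\la^{(i)}|}$ and the Levi $W$-algebra factors as $\Wl\cong\bigotimes_{i=1}^r U(\gl_{|\la^{(i)}|},\la^{(i)})$ (with the signs only twisting the weight conventions in \eqref{wt:aijplus}), we have $V(\mf l,\bfA)=\otimes_i V(A^{(i)},\ep_i)$ and $L(\mf l,\bfA)=\otimes_i L(A^{(i)},\ep_i)$. It therefore suffices to treat a single partition: find a surjection $V(A,\ep)\twoheadrightarrow L(A,\ep)$ of $U(\gl_{n},\la)$-modules for $A\in\Std(\la,\ep)$. An outer tensor product of surjections over a tensor product of algebras is again surjective.

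For $\ep=+$ this is \cite[\S7]{BK08}. There $V(A)$ is shown to be a highest weight module generated by the tensor product of highest weight vectors of the $V(A_j)$, of highest weight $A$, and hence its unique irreducible quotient is $L(A)$. The case $\ep=-$ is the same statement after the sign change $a\mapsto -a$ in \eqref{wt:aijplus}, which interchanges the roles of increasing and decreasing tableaux. I expect the main obstacle to be this cyclicity: that the Miura pullback of $V(A_1)\boxtimes\cdots\boxtimes V(A_{\wp})$ is generated by its highest weight vector. Brundan--Kleshchev derive it using standardness of $A$ and the Yangian/shifted-Yangian description. I would cite it rather than reprove it, only checking that the highest weight of the generating vector matches the label of $L(\mf l,\bfA)$ under \eqref{wt:aijplus}. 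This is a bookkeeping check of the shifts $\sum_{t<k}(\ep_t(n_t-1)-\ell_t)$, which are constant on each factor and hence harmless.
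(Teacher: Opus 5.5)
Your proposal is correct and follows essentially the same route as the paper. You use \cite[Theorem 7.13]{BK08} to get $L(\mf l,\bfA)$ as the simple head of the highest weight module $V(\mf l,\bfA)$, push that surjection through ${\mc I}^{\g}$, and compose with the quotient ${\mc I}^{\g}(L(\mf l,\bfA))\twoheadrightarrow L(\bfA)$; your factor-by-factor reduction and the sign change $a\mapsto -a$ for $\ep=-$ only make explicit what the paper leaves implicit.
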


\begin{proof}
    By \cite[Theorem 7.13]{BK08}, $V(\mf l, \bfA)$ is a highest weight module with $L(\mf l,A)$ as its simple head.
    Hence there is a surjective homomorphism from $\Delta(\bfA)$, see \eqref{module:DeltaA}, to ${\mc I}^{\g} \big(L(\mf l, \bfA)\big) =\Wg\otimes_{\Wg_{\ge 0}} L(\mf l,\bfA)$, and the latter has $L(\bfA)$ as its simple head. The lemma is proved.
\end{proof}

\subsection{Characters of standard modules}

Let $\bfA \in \Std(\bflaep)$. Recall $q_j^\ep$ from \eqref{eq:qj}. For each column $j$, we can further define the parabolic Verma $\gl_{{q_j^+|q_j^-}}$-module $N(\bfA_j)$, relative to the Levi subalgebra $\bigoplus_{k=1}^r\gl_{q^{(k)}_j}$, of highest weight given by $E^{(k)}_{i+j,i+j}$ acting as the scalar \eqref{wt:aijplus} on the highest weight vector. Denote by
\begin{align} \label{def:moduleNA}
N(\bfA) =N(\bfA_{1})\boxtimes\cdots\boxtimes N(\bfA_{\wp}),
\end{align}
the $\Wg$-module in $\Pcat$ which is the pullback via Miura transform \eqref{eq:Miura} of the parabolic Verma $\g(0)$-module $N(\bfA_{1})\otimes\cdots\otimes N(\bfA_{\wp})$.

\begin{rem}
For a signed partition $\bfep\bfla$ we can further interpret the entries of a tableaux $\bfA$ as a $\rho$-shifted weight of $\gl_{n|m}$ where $\rho$ is the Weyl vector of $\gl_{n|m}$ that is uniquely determined by $(\rho,\varepsilon_1)=0$ and $(\rho,\alpha)=(\alpha,\alpha)/2$, for every simple root of the corresponding Borel subalgebra of $\gl_{n|m}$ determined by the sign sequence $\underline{\ep}$. Here, the matrices in $\gl_{n|m}$ are labeled by $I(n|m)$ as in the multi-pyramid \eqref{YDiagram}. We let $\mc N(\bfA)$ be the parabolic $\gl_{n|m}$-Verma module of this $\rho$-shifted highest weight. We have an automorphism $\ov{\eta}:U(\mf p)\rightarrow U(\mf p)$ given by
\begin{align*}
E_{ij}\stackrel{\ov{\eta}}{\longrightarrow} E_{ij}+\ep_k\delta_{ij}\bigg(q^{(k)}_1 - \sum_{r=1}^{\text{col}(j)} q^{(k)}_{r}\bigg), \qquad \text{ for }i,j \in I(n|m).
\end{align*}
Here $i$ appears as an entry in the pyramid $\la^{(k)}$ whenever $i=j$. Note that this is the signed multi-partition generalization of $\ov{\eta}$ introduced in \cite[(3.23)]{BK08} in the one-partition case. Twisting the action of $U(\g,e)$ by the automorphism $\ov{\eta}$ above, we can show similarly as in the proof of \cite[Lemma 8.17]{BK08} that  the ``Whittaker functor'' maps $\mc N(\bfA)$ to $N(\bfA)$. Here, the Whittaker functor is understood to be analogously defined as in \cite[\S8.5]{BK08}, i.e., first taking the full dual of each $\mf t$-weight space, then applying the Whittaker invariant functor, and then taking the dual again.
\end{rem}

The main result of this subsection is the following identity in the Grothendieck group $[\Pcat]$. One can view this as a ``character formula" for $\Delta(\bfA)$ in terms of $N(\bfA)$.

\begin{thm}
\label{thm:charStandard}
  The following identity holds in the Grothendieck group $[\Pcat]$:
\[
[\Delta(\bfA)] =[N(\bfA)], \qquad \text{ for } \bfA\in\Std(\bflaep).
\]
\end{thm}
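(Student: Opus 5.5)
The plan is to reduce the identity to the Verma-module case in the full category $\Ofull$. That case is the super generalization by Lu--Peng \cite{LP26} of \cite[Corollary~6.3]{BK08}:
\[
[M(B)]=[N(B)]\quad\text{in } [\Ofull], \qquad B\in\Row(\bflaep)=\Std(\uep\,\ula).
\]
The reduction runs through the Levi and then uses exactness of parabolic induction. Two facts from the earlier discussion make this possible. First, $\Pcat$ is a subcategory of $\Ofull$, since finite-dimensionality of $\mc R_{\mf l}$ implies finite-dimensionality of the weight spaces for the principal $\underline{\theta}$. Second, induction in stages holds: $\mc I^{\g}$ from $U(\underline{\mf l},\uep\,\ula)$ factors as $\mc I^{\g}\circ \mc I^{\mf l}$, where $\mc I^{\mf l}$ is the analogous induction for $\Wl$ relative to $\underline\theta$. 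This factorization should come from the identification \eqref{eqn:iso:1} applied to the chain $\underline{\mf l}\subseteq \mf l\subseteq\g$ together with transitivity of tensor induction. Both facts can be compared in the Grothendieck group $[\Ofull]^\wedge$, into which $[\Pcat]^\wedge$ embeds.

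\emph{Step 1 (the Levi).} Since $\Wl\cong \bigotimes_i U(\gl_{|\la^{(i)}|},\la^{(i)})$ (super only through the sign), the finite-dimensional module $V(\mf l,\bfA)$ is a tensor product of the Brundan--Kleshchev standard modules $V(A^{(i)},\ep_i)$. Their characters are given by \cite[Theorem~7.14]{BK08}: a signed sum over column-permuted tableaux, equivalently the image of $V_{A^{(i)}}$ under $\jmath$ expanded in $\Pi_B$. In the category $\calO$ for $\Wl$, I would write
\[
[V(\mf l,\bfA)]=\sum_{B} c_{\bfA,B}\,[M^{\mf l}(B)],
\]
a finite alternating sum of Verma modules of $\Wl$. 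This is the BGG-type resolution of the finite-dimensional $\gl$-modules $V(A^{(k)}_j,\ep_k)$ by Vermas, pulled back along the Miura transform. The same Weyl-group alternating sum, applied to the Levi $\bigoplus_k\gl_{q_j^{(k)}}$ inside $\gl_{q_j^+|q_j^-}$, gives
\[
[N(\bfA)]=\sum_B c_{\bfA,B}\,[N(B)],
\]
where $N(B)$ for $B\in\Row$ is the Miura pullback of the full Verma module of $\g(0)$. This is because a parabolic Verma module is the alternating sum of Vermas over the Levi Weyl group, and the Miura pullback of tensor products is compatible with this.

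\emph{Step 2 (inducing and applying the Verma identity).} Apply the exact functor $\mc I^{\g}$ to the Step 1 identity. Using $\mc I^{\g}(M^{\mf l}(B))=M(B)$ by induction in stages, we get
\[
[\Delta(\bfA)]=\sum_B c_{\bfA,B}[M(B)]=\sum_B c_{\bfA,B}[N(B)]=[N(\bfA)],
\]
where the middle equality is Lu--Peng. The coefficients $c_{\bfA,B}$ must match exactly in the two expansions. The claim is that both are the coefficients of $\prod_{k,j}\sum_{w\in\mf S_{q^{(k)}_j}}(-1)^{\ell(w)}$ acting on columns, with the same $\rho$-shift \eqref{wt:aijplus}. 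This holds because the Miura transform for $\Wl$ is the restriction of that for $\Wg$ to the relevant $\g(0)$-components. I will check that the $\ep$-dependent shift in \eqref{wt:aijplus} is the same on both sides.

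The main obstacle is the rigorous justification of the two Grothendieck-group manipulations with infinite alternating sums. In particular, the Verma modules $M(B)$ appearing may lie outside $\Pcat$, and one must verify that $[\Delta(\bfA)]$ and $[N(\bfA)]$ are determined by their images in $[\Ofull]^\wedge$. I would handle this by comparing $\mf t$-weight-space dimensions (formal characters), which determine classes in both categories and are additive and compatible with $\mc I^{\g}$ and Miura pullbacks. Transitivity of induction should likewise be checked on the level of characters via the PBW-type description $U(\g,e)\cong U(\g_e)$ as $\mf t$-modules.
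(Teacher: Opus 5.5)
Your proposal is correct and is essentially the paper's proof. Both sides are expanded in $[\Ofull]$ as the same finite alternating sum $\sum_{\sigma\in C_{\bfA}}(-1)^{\ell(\sigma)}[M(\sigma\bfA)]$ over the column stabilizer, using the Weyl character formula, the Lu--Peng Verma identity (once at the Levi level, where $\mf l\subseteq\g_\oa$ so it reduces to Brundan--Kleshchev, and once for $\g$), and transitivity of induction. Since these sums are finite and the comparison can be made directly in $[\Ofull]$, your concern about infinite alternating sums and completions is unnecessary: the map $[\Pcat]\to[\Ofull]$ is injective because distinct simples go to distinct simples.
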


Let us consider for a moment the full category $\Ofull$ ($=\calP(\ula,\uep)$) from Example \ref{ex:fullcat}. In this special case, the character identity in Theorem \ref{thm:charStandard} is  established in \cite{LP26}, as a highly nontrivial super generalization of \cite[Theorem 6.2, Corollary 6.3]{BK08}; note that the non-super case corresponds to $\bfep=(+^r)$. \cite[Theorem 6.2]{BK08} describes the Gelfand-Tsetlin character of $\Delta(\bfA)$ via a maximal commuative subalgebra of a Yangian. Given $B \in \Row(\bflaep)$, we denote its columns from left to right by $B_{1},\ldots,B_{\wp}$. Peng \cite{Pen21} established a super generalization of works of Brundan-Kleshchev, relating shifted super Yangians to finite $W$-superalgebras of type $A$. We record here the identity of Lu-Peng.

\begin{thm} \cite{LP26}
\label{thm:LPcharVerma}
    The following identity holds in the Grothendieck group $[\Ofull]$:
    \[
    [M(B)] =\big[M(B_{1}) \boxtimes \ldots \boxtimes M(B_{\wp})\big],
    \quad
    \text{ for } B \in \Row(\bflaep).
    \]
\end{thm}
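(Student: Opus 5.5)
The plan is to follow the strategy of \cite[Theorem 6.2, Corollary 6.3]{BK08} in the super setting. We compare the two modules through their \emph{Gelfand--Tsetlin characters} and never construct a filtration directly. The input is Peng's presentation \cite{Pen21} of $\Wg$ as a truncated shifted super Yangian, taken in the parabolic presentation attached to the refined signed multi-partition $\uep\,\ula$. The parity sequence of the super Yangian must match the ($\bfep$-dependent) row ordering of the multi-pyramid, so the first step is to set up this presentation for an arbitrary $(n|m)$-sequence $\uep$, not just the standard one. In this presentation we take the generating series $D_i(u)$, one for each row $i$ of the pyramid. We let $\Gamma\subseteq\Wg$ be the commutative subalgebra generated by all their coefficients. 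The identity should then follow from three steps.

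First, we compute the $\Gamma$-character of the Verma module $M(B)$ for $B\in\Row(\bflaep)$. The Cartan $U(\mf l,\uep\,\ula)$ acts on the highest weight vector by a character; the $D_i(u)$ lie in $\Wg_{\ge0}$ modulo the ideal $\Wg_{\#}$, so this character determines their eigenvalues on that vector. A PBW basis of $\Wg$ over $\Wg_{\ge0}$, built from the lowering generators $F_{ij}^{(r)}$, spans $M(B)$. We show that the $D_i(u)$ act triangularly on this basis, with respect to a suitable order refining the $\mf t$-weight. The multiset of generalized $\Gamma$-eigenvalues is then read off from the diagonal: explicit rational functions in $u$ built from the shifted entries \eqref{wt:aijplus} of $B$, with the signs $\ep_k$ deciding whether a factor lands in the numerator or the denominator.

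Second, we compute the $\Gamma$-character of $M(B_1)\boxtimes\cdots\boxtimes M(B_\wp)$. Under the Miura transform \eqref{eq:Miura}, each $D_i(u)$ becomes an ordered product, over the columns $j$, of the quasi-determinant (quantum Berezinian) series of $Y(\gl_{q_j^+|q_j^-})$ evaluated on column $j$. The $\Gamma$-character of the pullback is therefore the product of the super Gelfand--Tsetlin characters of the Verma modules $M(B_j)$ over $\gl_{q_j^+|q_j^-}$. Each of these is computed by the classical triangularity of the Gelfand--Tsetlin algebra on a PBW basis of $U(\mf n^-)$; the super case is handled by the Berezinian in place of the determinant. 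We then check that this product agrees with the expression from the first step. This is a combinatorial identity matching the PBW monomials on both sides, following the bijection in \cite[\S6]{BK08}. It is made sign-sensitive by the odd root vectors: these contribute factors with inverted rational functions, which cancel against even contributions.

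Third, we pass from $\Gamma$-characters to the Grothendieck group. We show that the $\Gamma$-generalized weight spaces of modules in $\Ofull$ are finite-dimensional, which can be arranged through the $\mf t$-grading. We also show that the $\Gamma$-characters of the simples $L(B)$, $B\in\Row(\bflaep)$, are linearly independent in the completed sense. The latter follows from a triangularity argument: the highest $\Gamma$-eigenvalue of $L(B)$ occurs with multiplicity one and determines $B$. Equal $\Gamma$-characters then force equal classes in $[\Ofull]$.

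The main obstacle is the second step in the super setting. The Miura image of $D_i(u)$ for a non-standard parity sequence involves quasideterminants that are ratios, not polynomials, in $u$. Odd generators also make the triangularity of the Gelfand--Tsetlin action less transparent, because odd lowering operators square to zero and contribute with inverted factors. I would first handle the case of two columns, $\wp=2$, via the coproduct of the shifted super Yangian. I would then argue inductively, peeling off the rightmost column with the baby comultiplication of \cite{Pen21}.
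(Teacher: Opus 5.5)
The paper does not prove this statement; it quotes \cite{LP26}, described as the super analogue of the Gelfand--Tsetlin argument behind \cite[Theorem 6.2, Corollary 6.3]{BK08}. Your outline has that architecture: Peng's super Yangian presentation, $\Gamma$-characters, multiplicativity under the Miura transform, and linear independence of the $\Gamma$-characters of the $L(B)$. However, your Step 1, where the real content lies, fails as stated. $\Gamma$ does not act triangularly on a PBW basis of $M(B)$, so there is no diagonal to read off.

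Already for $\bfla=((2),(2))$, $\bfep=(+,+)$ (so $\g=\gl_4$, $e$ of Jordan type $(2,2)$), let $a_1,a_2$ be the first-row entries of $B$. The $\mf t$-weight space one step below $v_B$ is $2$-dimensional, spanned by $F^{(1)}v_B, F^{(2)}v_B$. On it $D_1(u)$ has, with the usual shift conventions, the two distinct eigenvalues $u^{-2}(u+a_1-1)(u+a_2)$ and $u^{-2}(u+a_1)(u+a_2-1)$. This is what the identity itself, known here by \cite{BK08}, predicts. In Yangian terms, $F(v)v_B$ is a rational function with simple poles at $v=-a_1,-a_2$, and the eigenvectors are its residues. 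These two eigenvalues are interchanged by $a_1\leftrightarrow a_2$. But $M(B)$, its PBW basis, and the matrix of $D_1(u)$ in that basis depend on the first row only through the character of the Cartan $\Wl\cong Z(\gl_2)\otimes Z(\gl_2)$, hence symmetrically in $a_1,a_2$. So no vector $Pv_B$ with $P$ independent of $B$ is an eigenvector for generic $B$, and no ordering of the PBW basis makes the action triangular. The ``combinatorial identity matching PBW monomials'' in your Step 2 therefore has nothing to match.

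What is missing is a way to get $\mathrm{ch}\,M(B)$ without diagonalising $\Gamma$ on a fixed basis. One standard route is a deformation argument:
\begin{enumerate}
\item Set $N(B)=M(B_1)\boxtimes\cdots\boxtimes M(B_\wp)$. The tensor product of highest weight vectors is killed by $\Wg_{>0}$ for $\mf t$-weight reasons, and has type $B$; this needs only the diagonal part of your Step 2. So there is a map $M(B)\to N(B)$.
\item The PBW theorems for $\Wg$ and for $U(\g(0))$ give the two modules equal $\mf t$-graded superdimensions.
\item For generic (typical, non-integral) $B$, $M(B)$ is simple by a central-character-plus-$\mf t$-weight argument, so the map is an isomorphism.
\item On each fixed $\mf t$-weight space, the generators of $\Gamma$ act on both sides by matrices polynomial in the entries of $B$. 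So the characteristic polynomials, hence the $\Gamma$-characters, agree for all $B$ by Zariski density.
\end{enumerate}
Your Step 3 then yields the identity in $[\Ofull]$.

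This also shows you have placed the main obstacle in the wrong step. Multiplicativity in Step 2 is the formal part. The Miura image of $D_i(u)$ is the ordered product of column series only up to correction terms; these raise the weight of an earlier tensor factor and so act triangularly. The genuinely hard part is computing $\mathrm{ch}\,M(B)$ itself.
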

Theorem \ref{thm:charStandard} is a parabolic generalization of this result in \cite{LP26} and our proof below relies on it.

\begin{proof} [Proof of Theorem \ref{thm:charStandard}]

Given a column-standard tableau $A$ (associated with a partition viewed as a left-justified pyramid), we denote by $C_{A}$ its column stabilizer, which is a product of symmetric groups.
Given a multi-tableau $\bfA =(A^{(1)},\ldots,A^{(r)}) \in \Std(\bflaep)$ associated with a multi-partition $\bfla=(\la^{(1)},\ldots,\la^{(r)})$, we denote $C_{\bfA}=C_{A^{(1)}}\times\ldots\times C_{A^{(r)}}$. In particular, the column stabilizers $C_{A^{(i)}_{j}}$ make sense the $j$th columns $A^{(i)}_{j}$ of $A^{(i)}$, and $C_{\bfA_{j}} =C_{A^{(1)}_{j}}\times \ldots \times C_{A^{(r)}_{j}}$.

The identities below in the proof are understood to hold in the Grothendieck group $[\Ofull]$. Moreover, the classes of Verma modules $[M(B)]$ or $[M(\mf l,B)]$, for some multi-tableau $B$, are always understood to be $[M(B')]$ or $[M(\mf l,B')]$, for a unique $B'\in \Row(\bflaep)$ which can be obtained from $B$ by permuting entries within rows.

For each $1\le i\le r$, by recalling the definition of $V(A^{(i)})$ from \eqref{eq:VA} and applying the Weyl charcter formula for finite-dimensional irreducible modules over Lie algebras of type $A$, we have
\begin{align*}
    [V(A^{(i)})] &=\sum_{\sigma^{i}=(\sigma_{1},\ldots,\sigma_{l}) \in C_{A^{(i)}_{1}}\times\ldots\times C_{A^{(i)}_{\wp_i}}}
    (-1)^{\ell(\sigma_{1})+\ldots+\ell(\sigma_{\wp_i})} \Big[M(\sigma_{1} A^{(i)}_{1})\boxtimes\ldots\boxtimes M(\sigma_{l} A^{(i)}_{\wp_i})\Big]
     \\
     &=\sum_{\sigma^{i} \in C_{A^{(i)}}} (-1)^{\ell(\sigma^{i})}\big[M(\sigma^{i} A^{(i)})\big],
\end{align*}
where the last equality follows by applying Theorem \ref{thm:LPcharVerma} and $C_{A^{(i)}} =C_{A^{(i)}_{1}}\times\ldots\times C_{A^{(i)}_{\wp_i}}$.
This gives us by definition \eqref{eq:VA:Levi} that
\begin{align*}
    [V(\mf l, \bfA)] = \big[\otimes_{i=1}^r V(A^{(i)})\big]
    &=\sum_{\sigma=(\sigma^{1},\ldots,\sigma^{r}) \in C_{A^{(1)}}\times\ldots\times C_{A^{(r)}}} (-1)^{\ell(\sigma^{1})+\ldots+\ell(\sigma^{r})}\big[\otimes_{i=1}^rM(\sigma^{i} A^{(i)})\big]
    \\
    &=\sum_{\sigma \in C_{\bfA}} (-1)^{\ell(\sigma)}\big[M(\mf l,\sigma \bfA)\big].
\end{align*}
Therefore, by the above identity and the definition \eqref{module:DeltaA} of $\Delta(\bfA)$ we have
\begin{align}
\label{eq:DeltaMA}
    [\Delta(\bfA)] &= \big[{\mc I}^{\g} \big(V(\mf l, \bfA)\big)\big] =\sum_{\sigma \in C_{\bfA}} (-1)^{\ell(\sigma)}\big[{\mc I}^{\g}\big(M(\mf l,\sigma \bfA)\big)\big]
    \\
    &=\sum_{\sigma \in C_{\bfA}} (-1)^{\ell(\sigma)}\big[\big(M(\sigma \bfA)\big)\big],
    \notag
\end{align}
where the last equality uses the transitivity of induction functors; cf. \cite[Lemma 3.5]{BG13}.

On the other hand, for each $1\le j \le \wp$, the Weyl character formula expresses the character of a parabolic Verma in terms of Verma modules as
\begin{align*}
    \big[N(\bfA_{j})\big] =\sum_{\sigma_{j}\in C_{\bfA_{j}}} (-1)^{\ell(\sigma_{j})}\big[M(\sigma_{j} \bfA_{j})\big].
\end{align*}
Using this formula and recalling $N(\bfA) =N(\bfA_{1})\boxtimes\cdots\boxtimes N(\bfA_{\wp})$ from \eqref{def:moduleNA}, we have
\begin{align}
    [N(\bfA)] &=\sum_{(\sigma_{1},\ldots,\sigma_{l}) \in C_{\bfA_{1}}\times\ldots\times C_{\bfA_{\wp}}} (-1)^{\ell(\sigma_{1})+\ldots+\ell(\sigma_{\wp})} \Big[M(\sigma_{1} \bfA_{1})\boxtimes\ldots\boxtimes M(\sigma_{l} \bfA_{\wp})\Big]
    \label{eq:NMA} \\
    &=\sum_{\sigma \in C_{\bfA}}(-1)^{\ell(\sigma)} \big[M(\sigma \bfA)\big],
    \notag
\end{align}
where the last equality follows by applying Lu-Peng's identity from Theorem \ref{thm:LPcharVerma}.

Now the theorem follows by comparing the identities \eqref{eq:DeltaMA} and \eqref{eq:NMA}.
\end{proof}

\subsection{Simple character formulas in $\Pcat$}

Denoted by $[\Pcat]^\wedge$ the downward completion of the Grothendieck group with respect to the basis of standard modules in partial order $\prec_{T,\bfep}$.

\begin{lem} \label{lem:bases}
    The completed Grothendieck group $[\Pcat]^\wedge$ admits a basis consisting of the standard modules $\{[\Delta(\bfA)] \mid \bfA\in \Std(\bflaep)\}$.
\end{lem}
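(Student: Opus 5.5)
The plan is to show that the standard classes and the simple classes are related by a unitriangular transition matrix with respect to $\preceq_{T,\bfep}$, with all infinite sums taken in the downward completion. Since $\{[L(\bfA)]\mid \bfA\in\Std(\bflaep)\}$ is a basis of the (finitary part of the) Grothendieck group, unitriangularity gives the claim after completion.

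\textbf{Step 1: the head of $\Delta(\bfA)$.} By Lemma \ref{lem:irrep}, $[\Delta(\bfA)] = [L(\bfA)] + \sum_{\bfA'} d_{\bfA,\bfA'}\,[L(\bfA')]$. The key point is that $d_{\bfA,\bfA'}\neq 0$ forces $\bfA'\prec_{T,\bfep}\bfA$. I will argue this in two pieces. For weights, every composition factor of $\Delta(\bfA)$ is of the form $L(\bfA')$, and $\mc R_{\mf l}(L(\bfA'))$ contains $L(\mf l,\bfA')$. Using Theorem \ref{thm:charStandard}, and through it the expansion \eqref{eq:DeltaMA} into Verma classes $[M(\sigma\bfA)]$, the $\mf t$-weights occurring in $\Delta(\bfA)$ lie below those of $\bfA$. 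Translating the $\theta$-grading and the $\mf t$-weights into the partial weights $\wt^i_{\bfep}$ should give $\wt^i_{\bfep}(\bfA)-\wt^i_{\bfep}(\bfA')\in\texttt{P}^+$ with equal total weight. This is the same mechanism as for Verma modules in $\Ofull$, where the Bruhat order $\preceq_{\bfs}$ is known to control composition factors by the super Kazhdan--Lusztig theory together with the Whittaker-module equivalences.

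\textbf{Step 2: the equal-weight case.} When all partial weights coincide, the factor $L(\bfA')$ must come from the Levi part $V(\mf l,\bfA)$ itself, since the $\theta$-degree-zero part of $\Delta(\bfA)$ is exactly $V(\mf l,\bfA)$. Then $L(\mf l,\bfA')$ is a composition factor of $\otimes_i V(A^{(i)},\ep_i)$. By the Brundan--Kleshchev theory \cite[Theorem 7.13]{BK08} and \cite{Bru06}, the factors of each $V(A^{(i)},\ep_i)$ satisfy $A'^{(i)}\le_{T,\ep_i} A^{(i)}$. Together these give $\bfA'\prec_{T,\bfep}\bfA$ in the sense of \S\ref{ssec:bruhat:order}.

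\textbf{Step 3: inversion and completion.} Combining Steps 1 and 2, the matrix $(d_{\bfA,\bfA'})$ is unitriangular. I also need finiteness conditions: only finitely many $\bfA'$ of fixed weight lie between any two elements, or at least the order is interval-finite enough that formal inversion makes sense in the completion. With that in hand, the matrix can be inverted to express each $[L(\bfA)]$ as a possibly infinite, downward-bounded combination of $[\Delta(\bfA')]$. Hence the $[\Delta(\bfA)]$ span $[\Pcat]^\wedge$, and they are linearly independent there by triangularity.

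I expect the main obstacle to be Step 1, namely making precise that composition factors of $\Delta(\bfA)$ respect the partial weights $\wt^i_{\bfep}$ rather than merely the total $\mf t$-weight. My first attempt would be to reduce to the Verma case via \eqref{eq:DeltaMA}: each $L(\bfA')$ in $\Delta(\bfA)$ appears in some $M(\sigma\bfA)$ with $\sigma\in C_{\bfA}$, and $\sigma\bfA\preceq_{\bfs_{\bflaep}}\bfA$ after row-rectification. The Verma-case ordering from $\Ofull$ would then be transported to $\preceq_{T,\bfep}$.
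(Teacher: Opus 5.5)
The paper states Lemma \ref{lem:bases} without proof; it is only used to define $\Psi$ in Theorem \ref{thm:char}(1), and $[\Pcat]^\wedge$ is itself defined relative to the standard classes. What actually needs proving is the unitriangularity $[\Delta(\bfA)]=[L(\bfA)]+\sum_{\bfA'\prec_{T,\bfep}\bfA}d_{\bfA,\bfA'}[L(\bfA')]$, together with interval-finiteness of $\preceq_{T,\bfep}$, so your target is the right one.

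However, the mechanism you put first in Step 1 cannot work. The torus $\mf t$ is spanned by row idempotents, so the $\mf t$-weight of a vector of type $\bfA'$, and a fortiori its $\theta$-degree, records only the shifted row sums of $\bfA'$. The order $\preceq_{T,\bfep}$, by contrast, is defined through the contents $\wt(A'^{(i)})\in\texttt{P}$. The equality $\wt_{\bfep}(\bfA')=\wt_{\bfep}(\bfA)$ is a linkage statement, and the conditions $\wt^i_{\bfep}(\bfA)-\wt^i_{\bfep}(\bfA')\in\texttt{P}^+$ are Bruhat-type statements; no bookkeeping of $\mf t$-weights produces them. Already for $\bfla=((2))$, the tableaux $(0,3)$ and $(1,2)$ have the same $\mf t$-weight but different contents. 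Note also that Lemma \ref{lem:irrep} gives only $[\Delta(\bfA):L(\bfA)]\ge 1$, whereas unitriangularity needs this multiplicity to be exactly $1$.

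The Verma reduction you sketch at the end does close the gap, and it makes Step 2 (and the appeal to Brundan--Kleshchev for $V(A^{(i)},\ep_i)$) unnecessary. Comparing coefficients of $[L(\bfA')]$ in \eqref{eq:DeltaMA}, every composition factor $L(\bfA')$ of $\Delta(\bfA)$ occurs in some $M(\mathrm{rect}(\sigma\bfA))$ with $\sigma\in C_{\bfA}$. You then need three facts, all of which hold.

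(a) $[M(B):L(B')]\neq 0$ implies $B'\preceq_{\bfs_{\bflaep}}B$. This follows from Theorem \ref{thm:Char_fullO} and the triangularity in Theorem \ref{thm:DCB:Slambda} by inverting a unitriangular matrix.

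(b) $\mathrm{rect}(\sigma\bfA)\preceq_{\bfs_{\bflaep}}\bfA$, with equality only for $\sigma=1$; this also gives multiplicity one for $L(\bfA)$. Reduce to a single block. For $\ep=+$, the tail of $\rho(\bfA)$ starting at row $r$ consists of the lower parts of the columns, which in a standard tableau are the smallest entries of each column. Hence the counts $\#\{x\le c\}$ in these tails can only drop under $\sigma$, which is exactly the $\texttt{P}^+$ condition. Tails starting in the middle of a row need a short case check, and $\ep=-$ follows by negating entries.

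(c) On $\Std(\bflaep)\subseteq\Row(\bflaep)$, the order $\preceq_{\bfs_{\bflaep}}$ implies $\preceq_{T,\bfep}$. At the first box of block $i$, the partial weight of the row reading is $\wt^i_{\bfep}$. At the first box of row $r$ of block $i$, it is $\ep_i\wt(A^{(i)}_{\le r})+\wt^{i+1}_{\bfep}$. So when all block partial weights agree, you recover exactly $A'^{(i)}\le_{T,\ep_i}A^{(i)}$.

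Finally, the interval-finiteness you flag in Step 3 holds, for three reasons. The set $\{\nu\mid\mu\le\nu\le\mu'\}\subset\texttt{P}$ is finite because the $\delta_a-\delta_{a+1}$ are linearly independent. Consecutive block partial weights determine each content $\wt(A'^{(i)})$. And only finitely many tableaux have a given content.
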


The main result of this paper is the following Kazhdan-Lusztig type character formula for parabolic categories of modules of the $W$-superalgebra $\Wg$. Denote
\[
\widehat P^{\bflaep} (\VV_\Z)=\Z\otimes_{\Q(q)}\widehat P^{\bflaep} (\VV_\cZ),
\]
where $\Q(q)$ acts on $\Z$ by $q\mapsto 1$, and similar notation applies later on.

\begin{thm}
 \label{thm:char}
 \qquad
 \begin{enumerate}
     \item
There is a $\Z$-linear isomorphism
$\Psi: [\Pcat]^\wedge \rightarrow \widehat P^{\bflaep} (\VV_\Z),$
    which sends the classes of standard modules $[\Delta(\bfA)]$ to the standard basis $\Delta_\bfA$, for $\bfA \in \Std(\bflaep)$.
    \item
    The map $\Psi$ sends the classes of simple modules to the dual canonical basis, i.e., $\Psi([L(\bfA)]) = L_\bfA$, for $\bfA \in \Std(\bflaep)$.
 \end{enumerate}
\end{thm}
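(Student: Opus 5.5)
Part (1) is essentially formal. By Lemma \ref{lem:bases} the classes $[\Delta(\bfA)]$, $\bfA\in\Std(\bflaep)$, form a basis of $[\Pcat]^\wedge$. By Theorem \ref{thm:DCB_P} (specialized at $q=1$) the elements $\Delta_\bfA$ form a basis of $\widehat P^{\bflaep}(\VV_\Z)$. Both completions are taken with respect to the same order $\prec_{T,\bfep}$. So we define $\Psi$ by $[\Delta(\bfA)]\mapsto\Delta_\bfA$ and extend it to the completions; it is then an isomorphism.

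For part (2), we first treat the full category $\Ofull=\calP(\ula,\uep)$, where $\Std(\uep\,\ula)=\Row(\bflaep)$ and $P^{\uep\,\ula}(\VV)=S^{\bflaep}(\VV)$. Here the standard modules are the Verma modules $M(B)$ (Example \ref{ex:fullcat}), and the standard basis is the monomial basis $\Pi_B$. We claim $\Psi_o([L(B)])=L_B$ at $q=1$, with $L_B$ the dual canonical basis of Theorem \ref{thm:DCB:Slambda}. To prove this we use Losev's equivalence between $\Ofull$ and a category of Whittaker modules over $\glnm$, adapted to an arbitrary sign sequence $\uep$ (equivalently, to the Borel determined by $\bfs_{\bflaep}$). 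This equivalence matches Verma modules with standard Whittaker modules and simples with simples. Next, the $\bfep$-variants of \cite{CCM23, CW25} express simple Whittaker characters through the Brundan--Kazhdan--Lusztig formula for $\glnm$ with this Borel \cite{CLW15, BLW17}. That formula is the statement $\pounds_f$ in $\widehat\bbT^{\bfs_{\bflaep}}$, and applying $\texttt{Sym}_{\bfla}$ turns it into $L_B=\pounds_{\rho(B)}\texttt{Sym}_{\bfla}$. Making this chain precise for nonstandard $\bfep$ is the main obstacle. I would handle it by checking that every input (Losev's equivalence, the Backelin-type functor, and the symmetrizer reduction) depends only on the choice of Borel and never on its standardness.

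Next, we establish the commutative square $\jmath\circ\Psi=\Psi_o\circ\jmath_o$. Here $\jmath_o$ is induced by the inclusion $\Pcat\subseteq\Ofull$, and $\jmath$ is the embedding \eqref{j:PS}. It suffices to check the square on the classes $[\Delta(\bfA)]$. By Theorem \ref{thm:charStandard} together with \eqref{eq:DeltaMA}, we have $[\Delta(\bfA)]=\sum_{\sigma\in C_\bfA}(-1)^{\ell(\sigma)}[M(\sigma\bfA)]$ in $[\Ofull]$. So we must show $\jmath(\Delta_\bfA)=\sum_{\sigma\in C_\bfA}(-1)^{\ell(\sigma)}\Pi_{\sigma\bfA}$ at $q=1$. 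Since $\Delta_\bfA=\otimes_i V_{A^{(i)}}$ and $\jmath=\otimes_i\jmath_i$, this reduces to a single partition. In that case $V_A=\xi_\la(\mc K_A)$ with $\mc K_A=M_{c(A)w}\texttt{Ant}$, and at $q=1$ the $R$-matrix $\calR_\la$ becomes a permutation. Thus $\xi_\la(\mc K_A)$ is the image in $S^\la$ of the column-antisymmetrization of $A$, which is the required signed sum. The bookkeeping of row/column readings and of the sign for $\ep=-$ is routine.

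Finally, we conclude. The class $[L(\bfA)]$ in $\Pcat$ is the same object $L(\bfA)\in\Ofull$, which is the simple module with label $\bfA\in\Std(\bflaep)\subseteq\Row(\bflaep)$ in the identification $\Row(\bflaep)=\Std(\uep\,\ula)$. Therefore $\jmath\Psi([L(\bfA)])=\Psi_o([L(\bfA)])=L_\bfA$ in $\widehat S^{\bflaep}$. By Proposition \ref{prop:sameDCB}, $\jmath(L_\bfA)=L_\bfA$, and $\jmath$ is injective. Hence $\Psi([L(\bfA)])=L_\bfA$. One should also check that the labelling of simples in $\Pcat$ by $\Std(\bflaep)$ agrees with that in $\Ofull$ under this inclusion. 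This holds because both are determined by the highest weight \eqref{wt:aijplus} of the $\mf t$-action on $\mc R_{\mf l}$.
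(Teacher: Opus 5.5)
Your proposal is correct and follows the paper's proof essentially step for step. Part (1) comes from Lemma \ref{lem:bases}, and the full-category case from Theorem \ref{thm:Char_fullO}. The square $\jmath\circ\Psi=\Psi_o\circ\jmath_o$ is checked on standard classes via \eqref{eq:DeltaMA} and the $q=1$ column-antisymmetrization formula for $\jmath(\Delta_\bfA)$, and the conclusion uses Proposition \ref{prop:sameDCB} together with injectivity of $\jmath$; your remark that the labels of simples in $\Pcat$ and $\Ofull$ agree is what the paper uses implicitly when it writes $\jmath_o([L(\bfA)])=[L(\bfA)]$.
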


Recall from \eqref{def:ula_uep} the new signed multi-partition $(\ula,\uep)$ associated with $(\bfla,\bfep)$. Recall from Example \ref{ex:fullcat} that $\Ofull$ denotes the full category $\mathcal O$ of $\Wg$-modules. The Verma (=standard) modules $\Delta(\bfA)$ in $\Ofull$ admits a simple head denoted by $L(\bfA)$, for $\bfA\in \Row(\bflaep)$. We first establish the following special case of Theorem \ref{thm:char}.

\begin{thm}
 \label{thm:Char_fullO}
Let $\bflaep$ be a signed multi-partition of type $(n|m)$. Then the $\Z$-linear isomorphism
$\Psi: [\Ofull]^\wedge \rightarrow \widehat S^{\bflaep} (\VV_\Z)$, defined by sending $[M(A)] \mapsto \Delta_A$, for all $A$, maps $[L(A)]$ to $L_A$ for all $A\in \Row(\bflaep)$.
\end{thm}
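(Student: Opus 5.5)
The plan is to reduce Theorem \ref{thm:Char_fullO} to the standard-sign-sequence case already treated in \cite{CCM23, CW25}, passing through Losev's equivalence with Whittaker modules and then through the Brundan--Kazhdan--Lusztig theory for $\glnm$ with an arbitrary Borel. Concretely, I will argue in three steps.

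\emph{Step 1.} By \cite{Los12} (its straightforward super version, cf.~\cite{SX20}), the full category $\Ofull$ with Cartan $U(\mf l,\uep\,\ula)$ is equivalent to a category $\mathcal N$ of Whittaker $\glnm$-modules. Here the Whittaker character is nondegenerate on the Levi $\underline{\mf l}$ attached to $\ula$, in which $e$ is principal. This equivalence sends Verma modules $M(A)$ to standard Whittaker modules and simples $L(A)$ to simple Whittaker modules, for $A\in\Row(\bflaep)$. The Borel used here is the one determined by the sign sequence $\bfs_{\bflaep}$, i.e.~the $\uep$-order of rows. The proof of the equivalence never uses that $\bfep$ is standard: it only uses the $\mf t$-grading and the minimal full subalgebra, so this transfer should be formal.

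\emph{Step 2.} I will use a Backelin-type functor (as in \cite{CCM23, CW25}) from the parabolic-singular block of the BGG category $\mc O$ for $\glnm$ with Borel $\bfs_{\bflaep}$ to $\mathcal N$. This functor sends Verma modules $M(f)$, for $f\in \Z^{n+m}$, to standard Whittaker modules $M(\bfA_f)$ when $f$ is $\bflaep$-anti-dominant. It sends simples $L(f)$ either to $L(\bfA_f)$ or to zero, and it identifies the Grothendieck group of $\mathcal N$ with the quotient of $[\mc O]$ obtained by symmetrizing over the row stabilizers. On the combinatorial side, the Brundan--Kazhdan--Lusztig conjecture for arbitrary Borel \cite{CLW15, BLW17} identifies $[M(f)]\mapsto M_f$ and $[L(f)]\mapsto \pounds_f$ in $\widehat{\bbT}^{\bfs_{\bflaep}}_\Z$.

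\emph{Step 3.} Applying the right action of $\texttt{Sym}_{\bfla}$ (refined to $\texttt{Sym}_{\ula}$) and specializing $q=1$ models this quotient map. By Lemma \ref{lem:stdbasis}, $M_f\texttt{Sym}_{\bfla}=\Pi_{\bfA_f}$. By Theorem \ref{thm:DCB:Slambda}, $\pounds_{\rho(\bfA)}\texttt{Sym}_{\bfla}=L_\bfA$. Comparing images then gives $\Psi([L(A)])=L_A$.

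The main obstacle is Step 2. The vanishing statement there must match exactly the vanishing of $\pounds_f\texttt{Sym}_{\bfla}$ for $f$ not anti-dominant, and the identification of simples must be checked for non-standard $\bfep$. I would first try the approach of \cite[Theorem 10]{CCM23}: verify that the Mili\v{c}i\'c--Soergel-type argument only uses the parabolic decomposition attached to $\underline{\mf l}\subseteq \g_\oa$, which is unchanged when the rows are reordered by $\uep$. A second point needing care is that the Grothendieck groups of $\mathcal N$ and of $\mc O$ must be completed compatibly with $\prec_{\bfs_{\bflaep}}$.
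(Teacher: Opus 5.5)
Your proposal is correct and takes the same route as the paper. The paper combines the super Losev equivalence between $\Ofull$ and a Whittaker category (\cite[Theorem 6.11]{CW25}, which rests on \cite{SX20}) with the $\bfep$-variant of the canonical-basis character formula for Whittaker modules from \cite{CCM23}, and it notes that the only hypothesis to check for non-standard $\bfep$ is that the Levi attached to the nilcharacter is even for the $\bfep$-Borel, which is exactly the point you isolate. The only difference is that you unpack \cite{CCM23} into its ingredients (the Backelin functor, Brundan--Kazhdan--Lusztig for the Borel of $\bfs_{\bflaep}$, and $L_\bfA=\pounds_{\rho(\bfA)}\texttt{Sym}_{\bfla}$ from Theorem~\ref{thm:DCB:Slambda}), whereas the paper simply cites it.
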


\begin{proof}
This theorem follows by combining $\bfep$-variants of two main results from \cite{CCM23} and \cite{CW25}; those two paper treated categories of modules with respect to the standard root systems associated with the standard sign sequence $\bfep_{\bfs\bft}$. The $\bfep$-variants can be established in an entirely analogous fashion assuming that the Levi subalgebra $\mf l_\zeta$ associated with nilcharacter $\zeta$ is an even Levi subalgebra with respect to the $\bfep$-simple root system, using the notations loc.~cit. This assumption holds in the setting of the present paper.

An equivalence of categories was established in \cite[Theorem 44]{CCM23} between the category $\Ofull$ of $\Wg$-modules and a suitable category ${\mc Wh}(\bflaep)$ of Whittaker $\g$-modules, where $\Delta(\bfA)$ (and resp. $L(\bfA)$) are in correspondence with the standard Whittaker modules (and resp. the irreducible quotients).

Now the theorem follows by an equivalence of categories between $\Ofull$ of $\Wg$-modules and ${\mc Wh}(\bflaep)$ which matches the standard modules and the irreducibles. Such an equivalence is due to Losev \cite{Los12} in the reductive Lie algebra setting, and the desired super generalization given in \cite[Theorem 6.11]{CW25} relies crucially on the work of \cite{SX20} which provides a super generalization of Losev's decomposition theorem.
\end{proof}

\begin{proof} [Proof of Theorem~\ref{thm:char}]

Part (1) follows from Lemma \ref{lem:bases}.

We consider the following diagram:
\begin{equation}
\label{diagA2}
\begin{tikzcd}
{[}\Pcat{]}^\wedge \ar[r,"\jmath_o"]\ar[d,"\Psi"]&{[}\Ofull{]}^\wedge \ar[d,"\Psi\!_o"]\\
\widehat P^{\bflaep} (\VV_\Z) \ar[r,"\jmath"]& \widehat S^{\bflaep} (\VV_\Z)
\end{tikzcd}
\end{equation}
Here $\jmath_o$ is the natural inclusion map on the Grothendieck group level as $\Pcat$ is a subcategory of $\Ofull$. The horizontal maps $\jmath_o, \jmath$ are embeddings of $\Z$-modules, and the vertical maps are $\Z$-linear isomorphisms (all these maps can be made to be $\U_\Z$-module homomorphisms, but we do not need this fact here). We add a subscript to $\Psi\!_o$ to distinguish it from the other $\Psi$.

We show the diagram \eqref{diagA2} is commutative.
Recall from \eqref{eq:DeltaMA} appearing in the proof of Theorem \ref{thm:charStandard} that \[
\jmath_o([\Delta(\bfA)]) =\sum_{\sigma \in C_{\bfA}} (-1)^{\ell(\sigma)}\big[\big(M(\sigma \bfA)\big)\big].
\]

By Theorem~\ref{thm:Char_fullO}, $\Psi\!_o([M(\sigma \bfA)]) =M_{\sigma \bfA}$, and hence
\begin{align} \label{eq:PsiI}
\Psi\!_o\circ \jmath_o([\Delta(\bfA)]) =\sum_{\sigma \in C_{\bfA}} (-1)^{\ell(\sigma)}M_{\sigma \bfA}.
\end{align}

Recall $\bfA=(A^{(1)},\ldots,A^{(r)})$. By definitions \eqref{def:xilambda}, \eqref{EPS:V}, \eqref{def:VA} and \eqref{def:DeltaA}, at the $q=1$ limit we have
\begin{align}  \label{eq:IDeltaA}
    \jmath(\Delta_\bfA) = \otimes_{i=1}^r\xi(\mc K_{A^{(i)}})
    =\otimes_{i=1}^r \Big(\sum_{\sigma^i\in C_{A^{(i)}}} (-1)^{\ell(\sigma^i)}M_{\sigma^iA^{(i)}} \Big)
    =\sum_{\sigma\in C_{\bfA}} (-1)^{\ell(\sigma)}M_{\sigma \bfA}.
\end{align}
In the last equality, we have identified $\sigma=(\sigma^1,\ldots,\sigma^r)$, $\sigma\bfA=(\sigma^1A^{(1)},\ldots,\sigma^rA^{(r)})$, and $M_{\sigma\bfA}=M_{\sigma^1A^{(1)}}\otimes\ldots\otimes M_{\sigma^rA^{(r)}}$.

Comparing \eqref{eq:PsiI}--\eqref{eq:IDeltaA} and recalling the definition that $\Psi ([\Delta(\bfA)]) =\Delta_{\bfA}$, we conclude that $\Psi\!_o\circ \jmath_o([\Delta(\bfA)]) =\jmath\circ\Psi([\Delta(\bfA)]) $ for all $\bfA\in \Std(\bflaep)$, i.e., the diagram \eqref{diagA2} is commutative.

On the other hand, by Proposition \ref{prop:sameDCB}, we have $\jmath (L_\bfA) =L_\bfA$. By definition we have $\jmath_o([L(\bfA)])= [L(\bfA)]$. Hence from the commutative diagram \eqref{diagA2} and $\Psi\!_o([L(\bfA)])= L_\bfA$ by Theorem \ref{thm:Char_fullO}, we conclude that $\Psi([L(\bfA)])= L_\bfA$.
The theorem is proved.
\end{proof}

\subsection{Finite-dimensional simple character formulas}

In case where $\bfla=(\la,\mu)$ and $\bfep=(+,-)$, we denote this (maximal) parabolic category $\Pcat$ to be $\calF(\la|\mu)$.

\begin{thm} \label{thm:fd_Char}
Let $\bfla=(\la,\mu)$ and $\bfep=(+,-)$, for partitions $\la$ and $\mu$ of $n$ and $m$.
\begin{enumerate}
    \item
    The category $\calF (\la|\mu)$ consists of all finite-dimensional $\Wgl$-modules of integer weights. A complete list of simple modules in $\calF(\la|\mu)$ consists of $L(A^{(1)},A^{(2)})$, for $(A^{(1)},A^{(2)}) \in \Std(\la,+)\times\Std(\mu,-)$.
    \item
    The $\Z$-module isomorphism $\Psi: [\calF(\la|\mu)]^\wedge \rightarrow
    P^\la(\VV_\Z) \widehat\otimes P^\mu(\VV^-_\Z)$, defined by mapping $\Delta(A^{(1)},A^{(2)}) \mapsto \Delta_{(A^{(1)},A^{(2)})}$, sends  $[L(A^{(1)},A^{(2)})]$ to $L_{(A^{(1)},A^{(2)})}$.
\end{enumerate}
\end{thm}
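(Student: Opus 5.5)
Part (2) is a direct specialization of Theorem \ref{thm:char} to $\bfla=(\la,\mu)$, $\bfep=(+,-)$, once we note that $P^{\bflaep}(\VV_\Z)=P^\la(\VV_\Z)\widehat\otimes P^\mu(\VV^-_\Z)$ by \eqref{{eq:Plaep}}. The real content is therefore part (1): identifying $\calF(\la|\mu)$ with the category of finite-dimensional integer-weight $\Wgl$-modules, and checking the list of simples.

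For this maximal parabolic case the Levi is $\mf l=\gl_n\oplus\gl_m=\g_\oa$, via the choice of $\theta$ acting by one integer on the $\la$-pyramid rows and a strictly smaller one on the $\mu$-pyramid rows. Then $\Wl\cong U(\gl_n,\la)\otimes U(\gl_m,\mu)$, and the grading $U(\g,e)=\bigoplus_k U(\g,e)_k$ under $\ad\theta$ has only degrees $-1,0,1$ (times a fixed positive integer). The pieces of degree $\pm1$ come from the odd part $\g_\ob$ via $U(\g_e)\cong U(\g,e)$ as $\mf t$-modules, so $U(\g,e)_{\pm}$ is generated by finitely many odd elements, which square to lower-filtration terms.

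For the inclusion ``$\Pcat\subseteq$ finite-dimensional'', take $M\in\calF(\la|\mu)$. Then $\mc R_{\mf l}(M)$ is finite-dimensional, and $M$ is finitely generated with $U(\g,e)_{>0}$ acting locally nilpotently. Every simple $L(\bfA)$ is a quotient of $\Delta(\bfA)=\mc I^{\g}(V(\mf l,\bfA))$. A PBW argument gives $\Delta(\bfA)\cong U(\g,e)_{<0}'\otimes V(\mf l,\bfA)$, where $U(\g,e)_{<0}'$ is an exterior-type space on the finitely many odd negative generators. Hence $\Delta(\bfA)$ is finite-dimensional. Since objects have finite length (\cite{Los12,BG13}), every object of $\calF(\la|\mu)$ is finite-dimensional.

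For the converse, let $M$ be finite-dimensional with integer weights. Then $\theta$ acts semisimply with finitely many eigenvalues, so $U(\g,e)_{>0}$ acts nilpotently and $\mc R_{\mf l}(M)$ is a finite-dimensional nonzero $\Wl$-module; thus $M\in\calF(\la|\mu)$. A simple finite-dimensional $M$ is the head of $\mc I^{\g}$ of a simple submodule $V$ of $\mc R_{\mf l}(M)$, by adjunction. Such $V$ is a finite-dimensional simple $U(\gl_n,\la)\otimes U(\gl_m,\mu)$-module of integer weight, so by \cite{BK08} it is $L(\mf l,\bfA)$ with $\bfA\in\Std(\la,+)\times\Std(\mu,-)$; for the $-$ factor we use the reversed convention, which is the same classification after twisting by the sign. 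Distinct $\bfA$ give non-isomorphic $L(\bfA)$ because $\mc R_{\mf l}(L(\bfA))\supseteq L(\mf l,\bfA)$ recovers $\bfA$ as the $\theta$-top component.

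The main obstacle is the PBW/finite-dimensionality claim for $\Delta(\bfA)$. This needs $U(\g,e)$ to be generated over $U(\g,e)_{\ge0}$ by a finite exterior-like set of odd elements of negative $\theta$-degree. I would deduce it from the associated graded $U(\g_e)$, where $(\g_e)_{\theta<0}$ is purely odd because $\mf l=\g_\oa$ contains all even parts. If lifting this to $U(\g,e)$ is delicate, the alternative is the character identity of Theorem \ref{thm:charStandard}. It expresses $[\Delta(\bfA)]=[N(\bfA)]$, with $N(\bfA)$ a pullback of a parabolic Verma over $\g(0)=\bigoplus\gl_{q^+_j|q^-_j}$ induced from the even Levi. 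Such a Kac-type module is finite-dimensional, and this again gives finite-dimensionality.
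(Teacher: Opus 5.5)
Your proposal is correct and follows essentially the paper's route: the paper also obtains (2) by specializing Theorem~\ref{thm:char}, and proves (1) by noting that $\mf l=\g_\oa$ and invoking the PBW theorem for $U(\g,e)$ from \cite{ZS15}. That PBW theorem is exactly the lift from $U(\g_e)$ you single out as the main obstacle, and it gives finite-dimensionality of the standard modules $\Delta(\bfA)$, hence of the simples $L(\bfA)$. Your extra material goes beyond what the paper spells out and is sound: the reverse inclusion via the $\theta$-top, the classification by adjunction and \cite{BK08}, and the fallback via Theorem~\ref{thm:charStandard} and the Kac-type modules $N(\bfA)$. The one slip is that only the generating space $\g_e$, not $U(\g,e)$ itself, is concentrated in three $\ad\theta$-degrees.
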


\begin{proof}
We prove (1). Under the assumptions in the theorem, the Levi $\mf l$ coincides with the even subalgebra $\g_\oa=\gl_n\oplus\gl_m$. Hence, by the PBW theorem for $U(\g,e)$ in \cite[Theorem 0.1]{ZS15}, it follows that all standard modules $\Delta(A^{(1)},A^{(2)})$ are finite dimensional and so are the simples $L(A^{(1)},A^{(2)})$, for $(A^{(1)},A^{(2)}) \in \Std(\la,+)\times\Std(\mu,-)$.

Parts (2) is a reformulation of Theorem \ref{thm:char}(2) in this special case.
\end{proof}

\begin{rem}
    The finite-dimensional irreducible $\Wgl$-modules (without imposing the integer weight condition) can be classified by modifying the proof of Theorem~\ref{thm:fd_Char}. Indeed, choosing again the Levi subalgebra $\mf l=\g_\oa$, we see that every finite-dimensional irreducible $U(\glnm,e)$-module is the head of a parabolic Verma module induced from a finite-dimensional $U(\g_\oa,e)$-module. Now, using the PBW theorem for $\Wgl$ again, it follows the finite-dimensional irreducibles are parametrized by the same parameter set as for finite-dimensional irreducible $U(\g_\oa,e)$-modules. The latter parameter set for finite $W$-algebra of type $A$ is known; cf. \cite{BK08}.
\end{rem}

\bibliographystyle{alpha}
\bibliography{Wchar.bib}

\end{document}